\documentclass[reqno,11pt]{amsart}
\usepackage{amsmath,amssymb,mathrsfs,bm,amsthm,amsfonts,comment,csquotes}
\usepackage[inline]{enumitem} 
\usepackage[usenames,dvipsnames]{xcolor}
\usepackage{hyperref}
\hypersetup{%
	colorlinks=true, linkcolor=blue,
	citecolor=ForestGreen
}
\usepackage[paper=letterpaper,margin=1in]{geometry}

\newtheorem{theorem}{Theorem}[section]
\newtheorem{lemma}[theorem]{Lemma}
\newtheorem{proposition}[theorem]{Proposition}
\newtheorem{corollary}[theorem]{Corollary}
\theoremstyle{definition}
\newtheorem{definition}[theorem]{Definition}
\newtheorem{example}{Example}
\numberwithin{example}{section}
\newtheorem{remark}{Remark}
\numberwithin{remark}{section}
\numberwithin{equation}{section}
\allowdisplaybreaks
\usepackage{acronym}

\renewcommand{\Pr}{\mathbb{P}}	
\newcommand{\Ex}{\mathbb{E}}	
\newcommand{\ind}{\mathbf{1}}	



\newcommand{\R}{\mathbb{R}} 


\newcommand{\aut}{Aut}

\newcommand{\ah}{Aut(H)}
\newcommand{\vr}{\operatorname{Var}}
\newcommand{\e}{\varepsilon}

\newcommand{\s}{\bm{s}}

\newcommand{\ceil}[1]{\lceil #1 \rceil} 
%




\renewcommand{\bar}{\overline}
\renewcommand{\hat}{\widehat}

\usepackage{graphicx}



\title[Motif Estimation via Subgraph Sampling]{Motif Estimation via Subgraph Sampling: The Fourth Moment Phenomenon}

\author[B.\ B.\ Bhattacharya]{Bhaswar B.~Bhattacharya}
\address{B.\ B.\ Bhattacharya,
	Department of Statistics, University of Pennsylvania,
	\newline\hphantom{\quad \ \ B.\ B.\ Bhattacharya}
	3730 Walnut Street, Philadelphia, PA 19104 USA.
}
\email{bhaswar@wharton.upenn.edu}
\author[S.\ Das]{Sayan Das}
\address{S.\ Das,
	Department of Mathematics, Columbia University,
	\newline\hphantom{\quad \ \ S. Das}
	2990 Broadway, New York, NY 10027 USA
	}
\email{sayan.das@columbia.edu}
\author[S.\ Mukherjee]{Sumit Mukherjee\textsuperscript{*}}\thanks{\textsuperscript{*}Research partially supported by NSF grant DMS-1712037}
\address{S.\ Mukherjee,
	Department of Statistics, Columbia University,
	\newline\hphantom{\quad \ \ S. Mukherjee}
	1255 Amsterdam Avenue, New York, NY 10027 USA.
}
\email{sm3949@columbia.edu}

\begin{document}
\begin{abstract} 
Network sampling is an indispensable tool for understanding features of large complex networks where it is practically impossible to search over the entire graph. In this paper we develop a framework for statistical inference for counting network motifs, such as edges, triangles, and wedges, in the widely used subgraph sampling model, where each vertex is sampled independently, and the subgraph induced by the sampled vertices is observed. We derive necessary and sufficient conditions for the consistency and the asymptotic normality of the natural Horvitz-Thompson (HT) estimator, which can be used for constructing confidence intervals and hypothesis testing for the motif counts based on the sampled graph. In particular, we show that the asymptotic normality of the HT estimator exhibits an interesting fourth-moment phenomenon, which asserts that the HT estimator (appropriately centered and rescaled) converges in distribution to the standard normal whenever its fourth-moment converges to 3 (the fourth-moment of the standard normal distribution). As a consequence, we derive the exact thresholds for consistency and asymptotic normality of the HT estimator in various natural graph ensembles, such as sparse graphs with bounded degree, Erd\H os-R\'enyi random graphs, random regular graphs, and dense graphons. 
\end{abstract}

\subjclass[2010]{62G05, 62E20, 05C30}
\keywords{Fourth moment phenomenon, Motif counting, Network analysis, Asymptotic inference, Stein's method, Random graphs}

\maketitle

\section{Introduction}

One of the main challenges in network analysis is that the observed network is often a  sample from a much larger (parent) network. This is generally due to the massive size of the network or the inability to access parts of the network, making it practically impossible to search/query over the entire graph. The central statistical question in such studies is to estimate global features of the parent network, that accounts for the bias and variability  induced by the sampling paradigm. The study of network sampling began with the results of Frank \cite{frank1977estimation,frank1978estimation} and Capobianco \cite{capobianco1972estimating}, where methods for estimating features such as connected components and graph totals were studied (see \cite{frank_2005} for a more recent survey of these results). Network sampling has since then emerged as an essential tool for estimating features of large complex networks, with applications in social networks \cite{handcock2010modeling,leskovec2006sampling,yang2015defining}, protein interaction networks \cite{rual2005towards,uetz2000comprehensive}, internet and communication networks \cite{govindan2000heuristics}, and socio-economic networks \cite{apicella2012social,bandiera2006social} (see \cite{crane_network_book,kolaczyk2009statistical,kolaczyk2017topics} for a detailed discussion of different network sampling techniques and their applications).

Counting motifs (patterns of subgraphs) \cite{milo2002network,prvzulj2004modeling} in a large network, which  encode important structural information about the geometry of the network, is an important statistical and computational problem. In this direction, various sublinear time algorithms based on edge and degree queries have been proposed for testing and estimating properties such as the average degree \cite{feige2006sums,goldreich2008approximating}, triangles \cite{triangletripartite,eden2017approximately}, stars \cite{aliakbarpour2018sublinear}, general subgraph counts \cite{gonen2011counting}, and expansion properties \cite{goldreich2011testing}. These results are, however, all based on certain adaptive queries which are unrealistic in applications where the goal is to estimate features of the network based on a single sampled graph \cite{apicella2012social,network_sampling}. In this framework, estimating features such as the degree distribution \cite{zhang2015estimating}, the number of connected components \cite{klusowski2018connected}, and the number of motifs \cite{klusowski2018counting}, have been studied recently, under various sampling schemes and structural assumptions on the parent graph.

In this paper we consider the problem of motif estimation, that is, counting the number of copies of a fixed graph $H = (V(H), E(H))$ (for example, edges, triangles, and wedges) in a large parent graph $G_n$ in the most popular and commonly used  subgraph sampling model, where each vertex of $G_n$ is sampled  independently with probability $p_n \in (0, 1)$ and the subgraph induced by these sampled vertices is observed. Here, the natural Horvitz-Thompson (HT) estimator obtained by weighting the number of copies of $H$ in the observed network by $p_n^{-|V(H)|}$ (the inverse probability of sampling a subset of size $|V(H)|$ in the graph $G_n$) is unbiased for the  true motif count. 
Very recently, Klusowski and Yu \cite{klusowski2018counting} showed that the HT estimator (for induced subgraph counts) is minimax rate optimal in the subgraph sampling model for classes of graphs with maximum degree constraints. Given this result, it becomes imperative to develop a framework for statistical inference for the motif counts in the subgraph sampling model. In this paper we derive precise conditions for the consistency and the asymptotic normality of the HT estimator, which can be used for constructing confidence intervals and hypothesis testing for the motif counts in the subgraph sampling model. The results give a complete characterization of the asymptotics of the HT estimator, thus providing a mathematical framework for evaluating its performance in different examples. We begin by formally describing the subgraph sampling model and the motif estimation problem in Section \ref{sec:subgraph_sampling}.  A summary of the results obtained is given in Section \ref{sec:theorems}.

\subsection{The Subgraph Sampling Model} 
\label{sec:subgraph_sampling}

Suppose $G_n=(V(G_n), E(G_n))$ is a simple, labeled, and undirected graph with vertex set $V(G_n)=\{1,2,\ldots,|V(G_n)|\}$ and edge set $E(G_n)$. We denote by $A(G_n)=((a_{ij}))_{i, j \in V(G_n)}$  the adjacency matrix of $G_n$, that is, $a_{ij} =1$ whenever there is an edge between $(i, j)$ and zero otherwise. In the {\it subgraph sampling model} each vertex of the graph $G_n$ is sampled independently with probability $p_n \in (0, 1)$, and we observe the subgraph induced by the sampled vertices. The parameter $p_n$ is referred to as the {\it sampling ratio} of the graph $G_n$. In the survey sampling literature this sampling scheme is also referred to as the Poisson sampling plan (see Till\'e \cite{tille2006sampling} and the references therein). The sampling scheme is illustrated in Figure \ref{fig:edgehistogram}, where the population graph and the vertices sampled (colored in red) are shown in the left and the observed graph is shown in the right. 

\begin{figure*}[h]
\centering
\begin{minipage}[c]{1.0\textwidth}
\centering
\includegraphics[width=5.25in]
    {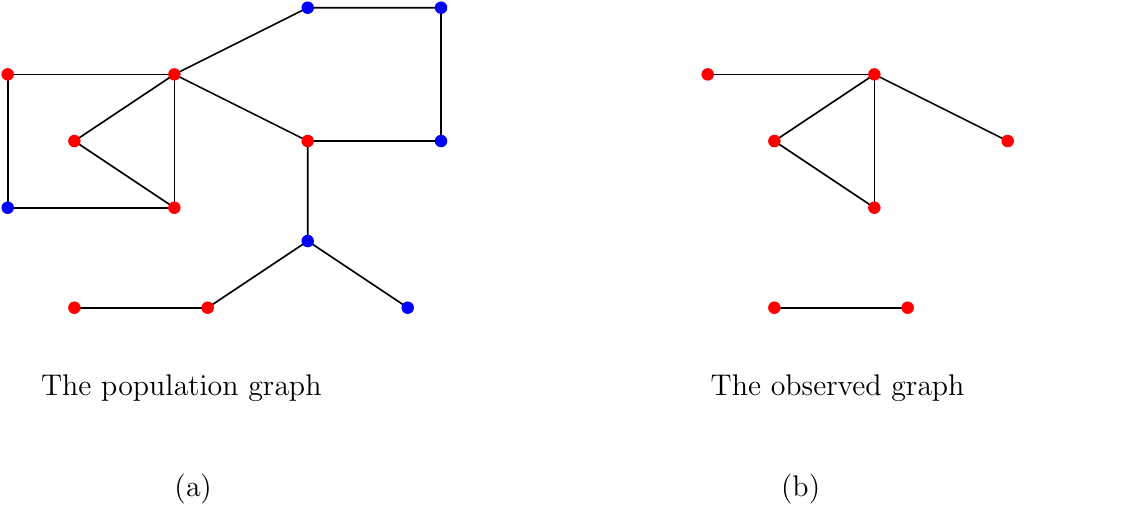}\\
\end{minipage} 
\caption{\small{The subgraph sampling scheme:  (a) The population graph and the vertices sampled (colored in red), and (b) the observed graph.}}
\label{fig:edgehistogram}
\end{figure*}

Having observed this sampled subgraph, our goal is to estimate the number of copies of a fixed connected graph $H=(V(H), E(H))$ in the parent graph $G_n$. Formally, the number of copies of $H$ in $G_n$ is given by 
\begin{align}\label{def:count}
N(H, G_n):=\frac1{|\ah|}\sum_{\s\in V(G_n)_{|V(H)|}} \prod_{(i,j)\in E(H)} a_{s_is_j} ,
\end{align}
\begin{itemize}
\item[--] $ V(G_n)_{|V(H)|}$ is the set of all $|V(H)|$-tuples ${\bm s}=(s_1,\ldots, s_{|V(H)|})\in  V(G_n)^{|V(H)|}$ with distinct indices.\footnote{For a set $S$, the set $S^N$ denotes the $N$-fold cartesian product $S\times S \times \ldots \times S$.} Thus, the cardinality of $V(G_n)_{|V(H)|}$ is $\frac{|V(G_n)|!}{(|V(G_n)|-|V(H)|)!}$. 

\item[--] $Aut(H)$ is the {\it automorphism group} of $H$, that is, the number permutations $\sigma$ of the vertex set $V(H)$ such that $(x, y) \in E(H)$ if and only if $(\sigma(x), \sigma(y)) \in E(H)$. 
\end{itemize}

Let $X_v$ be the indicator of the event that the vertex $v\in V(G_n)$ is sampled under subgraph sampling model. Note that $\{X_v\}_{v\in V(G_n)}$ is a collection of i.i.d.~$\operatorname{Ber}(p_n)$ variables.  
For $\s \in V(G_n)_{|V(H)|}$, denote $$X_{\s}:=X_{s_1} X_{s_2} \ldots X_{s_{|V(H)|}} := \prod_{u=1}^{|V(H)|} X_{s_u} \quad \text{ and } \quad M_H(\s):=\prod_{(i,j)\in E(H)}a_{s_is_j}.$$ Then the number of copies of $H$ in the sampled subgraph is given by:
\begin{align} \label{def:stat}
T(H, G_n) := \frac1{|\ah|}\sum_{\bm s\in V(G_n)_{|V(H)|}} M_H(\s) X_{\s} ,
\end{align}  
Note that $\Ex[T(H, G_n)] =p_n^{|V(H)|} N(H, G_n)$, hence 
\begin{align}\label{eq:estimate_H}
\hat N(H, G_n):=\frac{1}{p_n^{|V(H)|}}T(H, G_n). 
\end{align} 
is a natural unbiased estimator for the parameter $N(H, G_n)$. This is referred to in the literature as the Horvitz-Thompson (HT) estimator of the motif count $N(H, G_n)$ \cite{klusowski2018counting}, since it uses inverse probability weighting to achieve unbiasedness \cite{sampling_estimator}.

\subsection{Summary of Results} 
\label{sec:theorems}

In this paper, we develop a framework for statistical inference for the motif counts using the HT estimator in the subgraph sampling model. The following is a summary of the results obtained:

\begin{itemize}

\item To begin with, we establish a necessary and sufficient condition for the consistency of the HT estimator, that is, conditions under which $\hat N(H, G_n)/N(H, G_n)$ converges to 1 in probability. To this end, we introduce the notion of local count function, which counts the number of copies of $H$ incident on a fixed subset of vertices, and show that the precise condition for the consistency of the HT estimator is to ensure that subsets of vertices with `high' local counts have asymptotically negligible contribution to the total count $N(H, G_n)$ (Theorem \ref{thm:consistency}).  

\item To derive the asymptotic normality of the HT estimator we consider the rescaled statistic 
\begin{align}\label{eq:ZHGn_I}
Z(H, G_n):= \frac{\hat N(H, G_n)- N(H, G_n)}{\sqrt{\vr[\hat N(H, G_n)]}}. 
\end{align}
Using the Stein's method for normal approximation, we derive an explicit rate of convergence (in the Wasserstein's distance) between $Z(H, G_n)$ and the standard normal distribution. As a consequence, we show that $Z(H, G_n) \stackrel{D} \rightarrow N(0, 1)$, whenever the fourth-moment $\Ex[Z(H, G_n)] \rightarrow 3$ (the fourth-moment of $N(0, 1)$) (see Theorem \ref{thm:wass} for details). This is an example of the celebrated {\it fourth-moment phenomenon}, which initially appeared in the asymptotics of multiple stochastic integrals (Wiener chaos) in the seminal papers \cite{NoPe09,nualart2005central} and has, since then, emerged as the driving condition for the asymptotic normality of various non-linear functions of random fields \cite{book_np}. In the present context of motif estimation, we show that the asymptotic normality of $Z(H, G_n)$ is a consequence of a more general central limit theorem (CLT) for random multilinear forms in Bernoulli variables, a result which might be of independent interest (Theorem \ref{thm:wass1}). 

\item Next, we discuss how the CLT for $Z(H, G_n)$ can be used to compute a confidence interval for the motif count $N(H, G_n)$. Towards this, we provide an unbiased estimate of the variance of $Z(H, G_n)$ that is consistent whenever the CLT error term for $Z(H, G_n)$ goes to zero, which can be used to construct an asymptotically valid confidence interval for $N(H, G_n)$ (Proposition \ref{ppn:variance_estimation}). 

\item We then derive a necessary and sufficient condition for the asymptotic normality of $Z(H, G_n)$. For this we need to weaken the fourth-moment condition $\Ex[Z(H, G_n)^4]\rightarrow 3$, which, although sufficient, is not always necessary for the asymptotic normality of $Z(H, G_n)$. In particular, there are graph sequences for which $Z(H, G_n) \stackrel{D} \rightarrow N(0, 1)$, even though the fourth-moment condition fails (Example \ref{example:4_normal}). Instead, we show that the  asymptotic normality of $Z(H, G_n)$ is characterized by a {\it truncated fourth-moment} condition. More precisely, $Z(H, G_n)$ converges in distribution to $N(0, 1)$ if and only if the second and fourth moments of an appropriate truncation of $Z(H, G_n)$, based on the local count functions, converges to 1 and  3, respectively (Theorem \ref{thm:normality}).

\item As a consequence of the above results, we derive the exact thresholds for consistency and asymptotic normality of the HT estimator in various natural graph ensembles, such as sparse graphs with bounded degree (Proposition \ref{bdd-deg}), Erd\H os-R\'enyi random graphs (Theorem \ref{thm:er-consistency}), random regular graphs (Corollary \ref{regulargraph}), and graphons (Proposition \ref{dense}). In each of these cases there is a threshold (which depends on the graph parameters) such that if the sampling ratio $p_n$ is much larger than this threshold then the HT estimator is consistent and asymptotically normal, whereas if $p_n$ is of the same order as the threshold, the HT estimator is neither consistent nor asymptotic normal. In particular, for the Erd\H os-R\'enyi graph, the threshold for consistency and asymptotic normality depends on the well-known balancedness coefficient of the graph $H$ (Definition \ref{defn:m}), and is related to the threshold for the occurrence of $H$ is the sampled random graph. 

\end{itemize}

These results provide a comprehensive characterization of the asymptotics of the HT estimator for the motif counts in the subgraph sampling model, which can be used to validate its performance in various applications. The formal statements of the results and their various consequences are given below in Section \ref{sec:main}.

\subsection{Asymptotic Notations}\label{sec:asymptotic_notation} Throughout we will use the following standard asymptotic notations. For two positive sequences $\{a_n\}_{n\geq 1}$ and $\{b_n\}_{n\geq 1}$, $a_n = O(b_n)$ means $a_n \leq C_1 b_n$, $a_n = \Omega(b_n)$ means $a_n \geq C_2 b_n$, and $a_n = \Theta(b_n)$ means $C_2 b_n \leq a_n \leq C_1 b_n$, for all $n$ large enough and positive constants $C_1, C_2$. Similarly, $a_n \lesssim b_n$ means $a_n=O(b_n)$, and $a_n \gtrsim b_n$ means  $a_n=\Omega(b_n)$, and subscripts in the above notation,  for example $\lesssim_\square$ or $\gtrsim_\square$,  denote that the hidden constants may depend on the subscripted parameters. Moreover, $a_n \ll b_n$  means $a_n = o(b_n)$, and $a_n \gg b_n$ means $b_n=o(a_n)$. Finally, for a sequence of random variables $\{X_n\}_{n \geq 1}$ and a positive sequence $\{a_n\}_{n \geq 1}$, the notation $X_n = O_P(a_n)$ means $X_n / a_n$ is stochastically bounded, that is, $\lim_{M \rightarrow \infty} \lim_{n \rightarrow \infty}\Pr( | X_n / a_n | \leq M ) = 1$, and $X_n = \Theta_P(a_n)$ will mean  $X_n = O_P(a_n)$ and $\lim_{\delta \rightarrow 0} \lim_{n \rightarrow \infty}\Pr( | X_n / a_n | \geq \delta ) = 1$.

\section{Statements of the Main Results}
\label{sec:main}

In this section we state our main results. Throughout we will assume that there exists $\kappa \in (0, 1)$ such that 
\begin{align}\label{eq:boundp}
p_n\le 1-\kappa,
\end{align}
for all $n \geq 1$. This is to rule out the degenerate case when we observe nearly the whole graph, in which case the estimation problem becomes trivial. The rest of this section is organized as follows: The necessary and sufficient condition for the consistency of the HT estimator is discussed in Section \ref{sec:consistency}.  The precise conditions for the asymptotic normality of the HT estimator and construction of confidence intervals are given in Section \ref{sec:normal_ZHGn}. Finally, in Section \ref{sec:graphs} we compute the thresholds for consistency and asymptotic normality for various graph ensembles.

\subsection{Consistency of the HT Estimator} 
\label{sec:consistency} 

In this section we obtain the precise conditions for consistency of the HT estimator 
$\hat N(H, G_n)$, for any fixed connected motif $H$ and any sequence of graphs $\{G_n\}_{n \geq 1}$, such that $N(H, G_n) > 0$ for all $n \geq 1$. To state our results precisely, we need a few definitions. For an ordered tuple $\bm s \in V(G_n)_{|V(H)|}$ with distinct entries, denote by $\bar{\bm s}$ the (unordered) set formed by the entries of $\bm s$ (for example, if $\bm s=(4, 2, 5)$, then $\bar{\bm s}=\{2, 4, 5\}$). For any 
non-empty set $A \subset V(G_n)$ with $1\le |A| \leq |V(H)|$, define the {\it local count function} of $H$ on the set $A$ as follows: 
\begin{align}\label{def:tj}
t_H(A):=\frac1{|\ah|}\sum_{\s \in V(G_n)_{|V(H)|} : \bar{\s}\supseteq A } M_H(\s),
\end{align}
where the sum is over of all ordered $\s\in V(G_n)_{|V(H)|}$ such that the set $\bar \s$  contains all the elements of $A$. In other words, $t_H(A)$ counts the number of copies of $H$ in $G_n$ that passes through a given set $A$ of distinct vertices. 

\begin{example}\label{example:local_A}
To help parse the above definition, we compute $t_H(A)$ in a few examples. For this fix vertices $u, v, w \in V(G_n)$. 
\begin{enumerate}
\item[--] If $H=K_2$ is an edge, then 
\begin{align*}
t_{K_2}(\{v\})=&\frac{1}{2}\sum_{u\in V(G_n)} \{ a_{uv}  + a_{v u}  \} = \sum_{u \in V(G_n)} a_{u v}  , 
\end{align*} 
is the degree of vertex $v$ in $G_n$. On the other hand, $t_{K_2}(\{u,v\})=\frac{a_{uv}  + a_{vu} }{2} = a_{uv} $.

\item[--] If $H=K_{1,2}$ is a 2-star (wedge), then 
\begin{align*}
t_{K_{1, 2}}  (\{v\})  =&\sum_{\substack{1 \leq u_1<u_2 \leq |V(G_n)| \\ u_1,u_2 \neq v}} (a_{vu_1}  a_{u_1u_2}  + a_{u_2v}  a_{vu_1}  +a_{u_1u_2}  a_{u_2v} ), \\
t_{K_{1, 2}} (\{u,v\}) =&\sum_{\substack{1 \leq w \leq |V(G_n)| \\ w \neq u,v}} (a_{vu}  a_{uw}  + a_{wv}  a_{vu}  + a_{uw}  a_{wv} )\\ 
t_{K_{1, 2}}(\{u,v,w\})=&a_{vu} a_{uw} +a_{wv} a_{vu} +a_{uw} a_{wv} .
\end{align*}

\item[--] If $H=K_{3}$ is a triangle, then 
\begin{align*}
t_{K_3}(\{v\})=\sum_{\substack{1 \leq u_1<u_2  \leq |V(G_n)| \\ u_1, u_2\neq v}} a_{vu_1} a_{u_1u_2} a_{vu_2} ,\quad
 t_{K_3}(\{u,v\})=\sum_{\substack{1 \leq w \leq |V(G_n)| \\ w \neq u,v}} a_{vu}  a_{uw}  a_{vw}  , 
 \end{align*}
counts the number of triangles in $G_n$ which passes through the vertex $v$, and the edge $(u,v)$ respectively.
Finally, $t_{K_3}(\{u,v,w\})=a_{vu}  a_{uw} a_{vw}$. 	
\end{enumerate} 
\end{example}

Our first result gives a necessary and sufficient condition for the consistency of the HT estimator $\hat N(H, G_n)$ (recall \eqref{eq:estimate_H}). Note that, since the parameter being estimated $N(H, G_n)$ can grow to infinity with $n$, consistency is defined in terms of the ratio of the estimator to the true parameter converging to 1. More formally, given a sequence of graphs $\{G_n\}_{n \geq 1}$ the HT estimator $\hat N(H, G_n)$ is said to be  {\it consistent} for the true motif count $N(H, G_n)$, if $$\frac{\hat N(H, G_n)}{N(H, G_n)} \stackrel{P}{\to} 1,$$ as $n \rightarrow \infty$.

\begin{theorem} \label{thm:consistency}
Suppose $G_n=(V(G_n),E(G_n))$ is a sequence of graphs, with $|V(G_n)| \rightarrow \infty$ as $n \rightarrow \infty$, and $H$ is a fixed connected graph. Then, given a sampling ratio $p_n\in (0,1)$ which satisfies \eqref{eq:boundp},  the HT estimator $\widehat N(H, G_n)$ is consistent for $N(H, G_n)$ if and only if the following holds: For all $\e>0$ 
		\begin{align} \label{e:cond2}
		\lim_{n\to\infty} \frac1{N(H, G_n)} \sum_{\substack{ A  \subset V(G_n) \\ 1 \leq |A| \leq |V(H)|}} t_H(A)\ind\{t_H(A)>\e p_n^{|A|} N(H, G_n)\}=0 . 
		\end{align}
\end{theorem}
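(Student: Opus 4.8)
The engine of the argument is a Hoeffding-type (chaos) decomposition of the HT estimator. Write $k := |V(H)|$ and $\xi_v := X_v - p_n$, so that $\{\xi_v\}_{v \in V(G_n)}$ are independent and centered, with $\Ex[\xi_v^2] = p_n(1-p_n)$ and $\Ex|\xi_v| = 2 p_n(1-p_n) \le 2 p_n$. Expanding $X_{\s} = \prod_{u=1}^{k}(p_n + \xi_{s_u})$ in \eqref{def:stat}, grouping the factors $\xi_v$ by the unordered vertex set on which they sit, and using the incidence identity $\sum_{A \,:\, |A| = j} t_H(A) = \binom{k}{j} N(H, G_n)$ (counting, for each tuple $\s$, the $j$-subsets of $\bar \s$), one obtains
\begin{align}\label{eq:hoeffding-dec}
\hat N(H, G_n) - N(H, G_n) = \sum_{j=1}^{k} p_n^{-j} \sum_{\substack{A \subseteq V(G_n) \\ |A| = j}} \Big( \prod_{v \in A} \xi_v \Big)\, t_H(A) =: \sum_{j=1}^{k} p_n^{-j} W_j .
\end{align}
Since the $\xi_v$ are independent and mean zero, the monomials $\prod_{v \in A} \xi_v$ are pairwise orthogonal in $L^2$; hence the $W_j$ are mutually orthogonal and $\vr[p_n^{-j} W_j] = p_n^{-j}(1-p_n)^{j} \sum_{|A| = j} t_H(A)^2$. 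Identity \eqref{eq:hoeffding-dec} together with these two elementary facts drives both directions.

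\emph{Sufficiency of \eqref{e:cond2}.} Fix $\e > 0$ and split, for each $j$, $W_j = W_j^{\le} + W_j^{>}$ according to whether $t_H(A) \le \e\, p_n^{j} N(H, G_n)$ or not. The ``heavy'' part is controlled in $L^1$: since $\Ex | \prod_{v \in A} \xi_v | = (2 p_n(1-p_n))^{|A|} \le (2 p_n)^{|A|}$,
\begin{align*}
\frac{1}{N(H, G_n)} \sum_{j=1}^{k} \Ex | p_n^{-j} W_j^{>} | \le \frac{2^{k}}{N(H, G_n)} \sum_{\substack{A \subseteq V(G_n) \\ 1 \le |A| \le k}} t_H(A)\, \ind\{ t_H(A) > \e\, p_n^{|A|} N(H, G_n) \} \longrightarrow 0
\end{align*}
by \eqref{e:cond2}. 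The ``light'' part is controlled in $L^2$: on its range of summation $t_H(A) \le \e\, p_n^{j} N(H, G_n)$, so, using orthogonality across $j$ and the incidence identity,
\begin{align*}
\vr \Big[ \sum_{j=1}^{k} p_n^{-j} W_j^{\le} \Big] = \sum_{j=1}^{k} p_n^{-j}(1-p_n)^{j} \!\!\! \sum_{\substack{|A| = j \\ t_H(A) \le \e\, p_n^{j} N(H, G_n)}} \!\!\! t_H(A)^2 \le \e \sum_{j=1}^{k} \binom{k}{j} N(H, G_n)^2 \le \e\, 2^{k}\, N(H, G_n)^2 .
\end{align*}
Markov's inequality applied to the first bound and Chebyshev's to the second give, for every $\delta > 0$, $\Pr( | \hat N(H, G_n) - N(H, G_n) | > \delta\, N(H, G_n) ) \le 4\e\, 2^{k}/\delta^{2} + o(1)$; letting $n \to \infty$ and then $\e \to 0$ proves consistency.

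\emph{Necessity of \eqref{e:cond2}.} I would argue the contrapositive. The first point is that consistency is strictly weaker than $\vr[\hat N(H, G_n)] = o( N(H, G_n)^2 )$ --- a heavy upper tail is permitted --- so Chebyshev is unavailable and one must exhibit a genuine fluctuation. Since $\hat N(H, G_n)/N(H, G_n) \stackrel{P}{\to} 1$ is equivalent to $\Ex[ \min( | \hat N(H, G_n) - N(H, G_n) |/N(H, G_n),\, 1 ) ] \to 0$, it suffices to produce, along a subsequence, an event of probability bounded away from $0$ on which $| \hat N(H, G_n) - N(H, G_n) | \gtrsim N(H, G_n)$. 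Assume \eqref{e:cond2} fails; after passing to a subsequence there are $\e_0, \delta_0 > 0$ and a level $j^\star \le k$ with $\sum_{A \in \mathcal{A}} t_H(A) \ge \delta_0\, N(H, G_n)$, where $\mathcal{A}$ is the family of $j^\star$-subsets $A$ with $t_H(A) > \e_0\, p_n^{j^\star} N(H, G_n)$. Greedily extract from $\mathcal{A}$ a maximal pairwise-disjoint subfamily $A_1, \dots, A_m$; since every discarded set meets some $A_i$ and $\sum_{B \ni v,\, |B| = j^\star} t_H(B) = \binom{k-1}{j^\star-1} t_H(\{v\})$, we get $m \cdot \max_{v \in V(G_n)} t_H(\{v\}) \gtrsim_{k} \delta_0\, N(H, G_n)$. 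So, after a further subsequence, either (i) $m$ is bounded, which forces a ``hub'' $v_0$ with $t_H(\{v_0\}) \ge c\, N(H, G_n)$ for a fixed $c \in (0,1]$, or (ii) $m \to \infty$. In case (i) I would condition on $\{ X_{v_0} = 0 \}$, an event of probability $1 - p_n$ bounded away from $0$ by \eqref{eq:boundp}: on it $\hat N(H, G_n)$ \emph{is} the HT estimator $\hat N(H, G_n - v_0)$ of the vertex-deleted graph, a nonnegative variable with mean $N(H, G_n) - t_H(\{v_0\}) \le (1-c) N(H, G_n)$, so Markov's inequality gives $\Pr( \hat N(H, G_n - v_0) \le (1 - c/2) N(H, G_n) ) \ge (c/2)/(1 - c/2) > 0$, i.e.\ $| \hat N(H, G_n) - N(H, G_n) | \ge (c/2) N(H, G_n)$ with probability bounded away from $0$.

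In case (ii) the events $\mathcal{E}_i := \{ A_i \subseteq S \}$ ($S$ the sampled set) are independent with probability $p_n^{j^\star}$ each, and comparing $T(H, G_n)$ with $\sum_i ( \text{copies of } H \text{ through } A_i \text{ inside } S )$ --- each copy of $H$ contains at most $\lfloor k / j^\star \rfloor$ of the disjoint $A_i$'s --- forces $m\, p_n^{j^\star} = O_{k, \e_0}(1)$; after a final subsequence $m\, p_n^{j^\star} \to \theta \in [0, \infty)$. When $\theta > 0$ one conditions on $\bigcap_i \mathcal{E}_i^{\,c}$ (probability bounded below) and reruns the Markov argument of case (i), the now-unobserved copies being those through $A_1, \dots, A_m$. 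The regime $\theta = 0$ (where necessarily $p_n \to 0$) I would handle by a second-moment analysis of the level-$j^\star$ piece $W_{j^\star}^{\mathcal{A}} := \sum_{A \in \mathcal{A}} ( \prod_{v \in A} \xi_v ) t_H(A)$, for which $\vr[ p_n^{-j^\star} W_{j^\star}^{\mathcal{A}} ] = p_n^{-j^\star}(1-p_n)^{j^\star} \sum_{A \in \mathcal{A}} t_H(A)^2 \ge (1-p_n)^{j^\star} \e_0 \delta_0\, N(H, G_n)^2 \gtrsim N(H, G_n)^2$. Routine here are the decomposition \eqref{eq:hoeffding-dec}, the incidence identity, and the entire sufficiency argument; the main obstacle is precisely this last step of necessity --- converting the variance lower bound $\vr[ p_n^{-j^\star} W_{j^\star}^{\mathcal{A}} ] \gtrsim N(H, G_n)^2$ into a non-vanishing fluctuation \emph{in probability}, uniformly as $p_n \to 0$, where hypercontractive comparisons between $L^1$ and $L^2$ norms on a fixed chaos degree are no longer at one's disposal.
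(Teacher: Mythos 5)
Your sufficiency argument is correct and takes a genuinely different route from the paper. You expand $X_{\s}=\prod_u(p_n+\xi_{s_u})$ into a Hoeffding/chaos decomposition indexed by vertex subsets $A$ and split each level into heavy and light sets according to $t_H(A)\gtrless\e p_n^{|A|}N(H,G_n)$, controlling the heavy part in $L^1$ and the light part in $L^2$ via orthogonality. The paper instead truncates the statistic $T(H,G_n)$ at the level of tuples $\s$ (discarding any tuple that meets a heavy set) and bounds the variance of the truncated statistic by a covariance expansion; the two computations are essentially dual to one another, and yours is arguably cleaner since orthogonality replaces the covariance bookkeeping. Your case (i) of necessity (a single hub) and your case (ii) with $\theta>0$ are also sound in outline, though you should note that bounding $\Ex[T(H,G_n)\mid\bigcap_i\mathcal{E}_i^{\,c}]$ requires the observation that conditioning on a decreasing event cannot increase $\Pr(X_{\s}=1)$ --- an FKG-type monotonicity step you use implicitly.

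The genuine gap is the sub-case $m\to\infty$, $m\,p_n^{j^\star}\to 0$, which you flag yourself. Your fallback there --- a lower bound $\vr[p_n^{-j^\star}W_{j^\star}^{\mathcal{A}}]\gtrsim N(H,G_n)^2$ --- cannot close the argument even in principle: a sequence of random variables can have variance of order $N^2$ and still converge to its mean in probability (put mass $1/M_n$ at a value of size $M_n N$), so no amount of second-moment information alone rules out consistency. The structural reason your reduction fails is that a \emph{maximal disjoint} subfamily of heavy sets may carry only an $O(m\,p_n^{j^\star})$ fraction of $N(H,G_n)$ even when the full heavy family carries a $\delta_0$ fraction, so conditioning on missing only the disjoint representatives does not depress the conditional mean. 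The paper's fix is to condition on missing \emph{every} heavy set simultaneously, i.e.\ on $E_1=\bigcap_{A\,\mathrm{heavy}}\{X_A=0\}$, and to use the FKG inequality to handle the overlaps: $\Pr(E_1)\ge\prod_{A}(1-p_n^{|A|})\ge\exp\bigl(-c\sum_A p_n^{|A|}\bigr)$, and the heaviness condition itself converts the exponent, since $p_n^{|A|}<t_H(A)/(\e N(H,G_n))$ and $\sum_A t_H(A)\le(2^{|V(H)|}-1)N(H,G_n)$, giving $\Pr(E_1)\ge e^{-c(2^{|V(H)|}-1)/\e}>0$ with no case analysis on $m$ or $p_n$. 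On $E_1$ the statistic coincides with its truncation, whose mean is at most $(1-\delta)p_n^{|V(H)|}N(H,G_n)$ along the subsequence where \eqref{e:cond2} fails, and Markov plus one more application of FKG (to remove the conditioning) yields an atypically small value of $\hat N(H,G_n)$ with probability bounded away from zero. Replacing your disjointification and the $\theta=0$ second-moment step by this FKG conditioning would complete your proof.
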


\begin{remark} Note that since every term in the sum in \eqref{e:cond2} is non-negative,  \eqref{e:cond2} is equivalent to 
\begin{align} \label{e:cond2_sum}
\lim_{n\to\infty} \frac1{N(H, G_n)} \sum_{\substack{ A  \subset V(G_n) \\ |A| = s}} t_H(A)\ind\{t_H(A)>\e p_n^{s} N(H, G_n)\}=0 , 
\end{align} 
for all $\e > 0$ and all $1 \leq s \leq |V(H)|$. To understand the implications of the condition in \eqref{e:cond2} (or equivalently, \eqref{e:cond2_sum}) note that 
\begin{align}\label{th-iden}
\sum_{\substack{A \subset V(G_n) \\ 1 \leq |A| \leq |V(H)|}}  t_H(A) & = \sum_{K=1}^{|V(H)|}\sum_{\substack{A \subset V(G_n) \\  |A| =K}}  \frac1{|\ah|}\sum_{\s: \bar{\s}\supseteq A } M_H(\s) \nonumber \\ 
& = \sum_{K=1}^{|V(H)|}\sum_{\s \in V(G_n)_{|V(H)|}}  \frac1{|\ah|}M_H(\s)\sum_{\substack{A \subseteq \bar{\s} \\  |A| =K}} 1 \nonumber \\ 
& =\sum_{K=1}^{|V(H)|}  N(H, G_n)\binom{|V(H)|}{K } = (2^{|V(H)|} - 1)  N(H, G_n) .
\end{align}  
Hence, \eqref{e:cond2} demands that the contribution to $N(H, G_n)$ coming from subsets of vertices with `high' local counts is asymptotically negligible. 
\end{remark}

The proof of Theorem \ref{thm:consistency} is given in Section \ref{sec:consistency}. To show  (\ref{e:cond2}) is sufficient for consistency, we define a truncated random variable  $T^+_{\e}(H, G_n)$ (see \eqref{eq:THGn_epsilon}), which is obtained by truncating the HT estimator whenever the local counts functions are large, more precisely, if $t_H(A) > \varepsilon p_n^{|A|} N(H, G_n)$. Then the proof involves two steps: (1) showing that the difference between $T^+_{\e}(H, G_n)$ and $T(H, G_n)$ is asymptotically negligible whenever (\ref{e:cond2}) holds (Lemma \ref{claim}), and (2) a second moment argument to show that $T^+_{\e}(H, G_n)$ concentrates around its expectation. For the necessity, assuming condition (\ref{e:cond2}) does not hold, an application of the well-known Fortuin-Kasteleyn-Ginibre (FKG) correlation inequality \cite[Chapter 2]{inequality} shows that with positive probability no $|V(H)|$-tuple with `high' local count functions is observed. Moreover, conditional on this event, there is a positive probability (bounded away from $0$) that the HT estimator is atypically small. This implies that the (unconditional) probability of the HT estimator being atypically small is also bounded away from zero, which shows the inconsistency of the HT estimator.

In Section \ref{sec:graphs} we will use Theorem \ref{thm:consistency} to derive the precise thresholds for consistency of the HT estimator for many natural classes of graph ensembles. The condition in \eqref{e:cond2_sum} simplifies for specific choices of the motif $H$, as illustrated for the number of edges ($H=K_2$) in the example below.

\begin{example}\label{rem:edge} 
Suppose $H=K_2$ is an edge. Then $N(K_2,G_n) = |E(G_n)|$ is the number of edges in $G_n$ and, recalling the calculations in Example \ref{example:local_A}, the assumption in \eqref{e:cond2_sum} is equivalent to the following two simultaneous conditions: For all $\e > 0$, 
\begin{align}\label{eq:edge_condition}
\lim_{n\to\infty}p_n^2 |E(G_n)| = \infty \quad \text{and} \quad \displaystyle \lim_{n\to\infty}\frac{1}{|E(G_n)|}\sum_{v = 1}^{|V(G_n)|} d_v\ind\{d_v>\e p_n|E(G_n)| \}= 0, 
\end{align}
where $d_v$ is the degree of the vertex $v$ in $G_n$. Note that first condition requires that the  expected number of edges in the sampled graph goes to infinity, and the second condition ensures that the fraction of edges incident on vertices with `high' degree (greater than $\varepsilon p_n |E(G_n)|$) is small. In Example \ref{example:Gn_consistent} we construct a sequence of graphs $\{G_n\}_{n \geq 1}$ for which $p_n^2 |E(G_n)| \rightarrow \infty$, but the HT estimator $\hat N(K_2, G_n)$ is inconsistent, illustrating the necessity of controlling the number of edges incident on the high-degree vertices, as in the second condition of \eqref{eq:edge_condition}. The condition in \eqref{e:cond2_sum}   can be similarly simplified for $H=K_{1,2}$ and $H=K_3$ using the calculations in Example \ref{example:local_A}.  
\end{example}

\begin{figure*}[h]
\centering
\begin{minipage}[l]{1.0\textwidth}
\centering
\vspace{-0.05in}
\includegraphics[width=3.75in,height=2.75in]
    {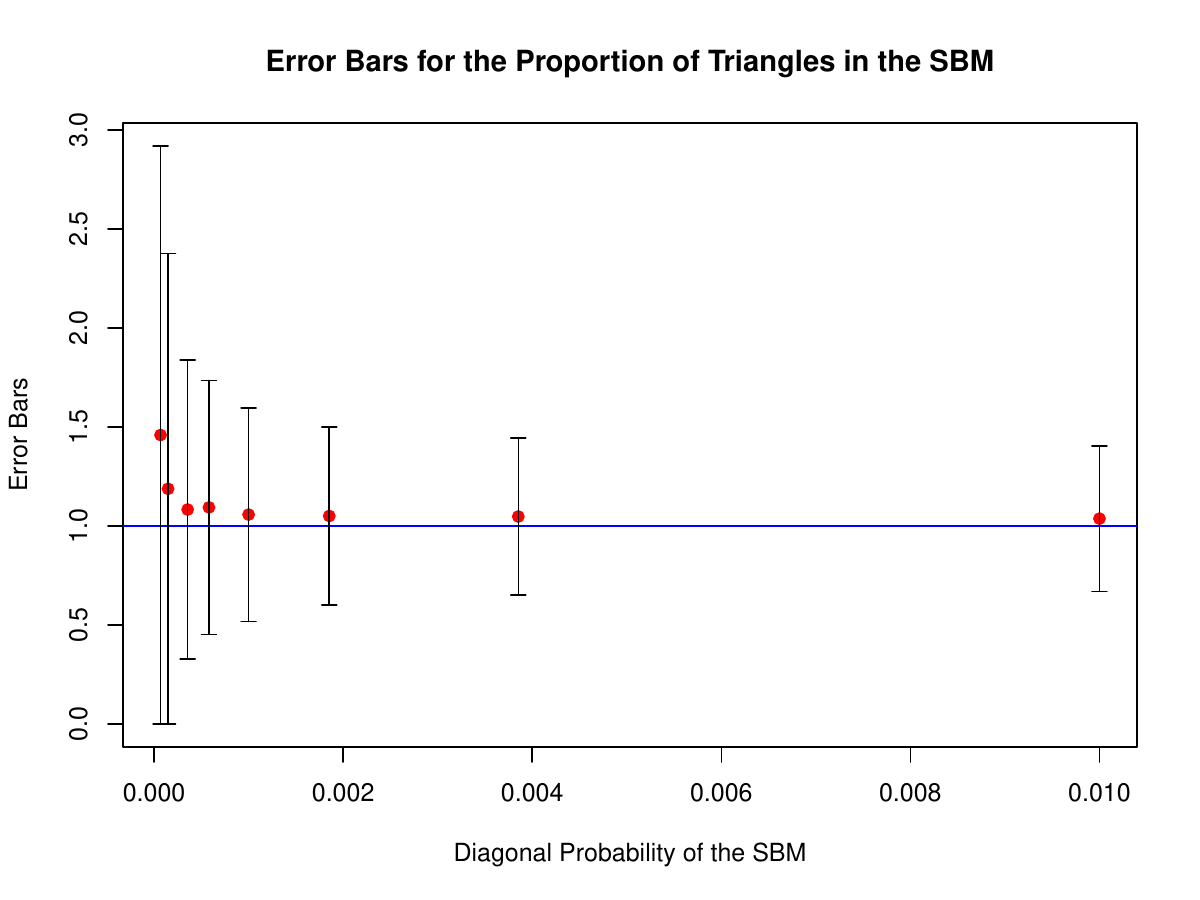}\\
\end{minipage} 
\caption{\small{Error bars for $\hat N(K_3, G_n)/N(K_3, G_n)$ in a 2-block stochastic block model on $n=10000$ vertices and equal block size, with off-diagonal probability $0.5$ and diagonal probability varying between 0 and $0.01$ (shown along the horizontal axis).}}
\label{fig:errorbars_blockmodel}
\end{figure*}

Figure \ref{fig:errorbars_blockmodel} shows the empirical 1-standard deviation error bars for estimating the number of triangles in a 2-block stochastic block model (SBM) with equal block sizes, where edges between vertices in the same block are present independently with probability $a \in (0, 1)$ and edges between vertices in different blocks are present independently with probability $b \in (0, 1)$. Here, fixing $a,b \in (0, 1)$ we consider a realization of $G_n$ from a stochastic block model on $n=10000$ vertices with equal block sizes and diagonal probability $a$ and off-diagonal probability $b=0.5$, and sampling ratio $p_n=0.03$. Figure \ref{fig:errorbars_blockmodel}  then shows the empirical 1-standard deviation error bars of $\hat N(K_3, G_n)/N(K_3, G_n)$ over 1000 repetitions, for a range of 8 values of $a$ between 0 and 0.01 (as shown along the horizontal axis). Note that as $a$ increases, the sizes of the error bars decrease, that is, $\hat N(K_3, G_n)$ becomes a more accurate estimator of $N(K_3, G_n)$. This is because one of the conditions that determine the consistency of $\hat N(K_3, G_n)$ is that the expected number of triangles in the sampled graph diverges, that is, $\Ex[T(K_3, G_n)] = p_n^3 \Ex[N(K_3, G_n)]$ (which is obtained by taking $s=3$ in \eqref{e:cond2_sum}). Now, as $a$ increases, $\Ex[N(K_3, G_n)]$, which is the expected number of triangles in the SBM, increases, hence  $\Ex[T(K_3, G_n)]$ increases, improving the accuracy of $\hat N(K_3, G_n)$ for estimating $N(K_3, G_n)$.

\subsubsection{A Simpler Variance Condition}

In this section we discuss a simpler sufficient condition for the consistency of the HT estimator,  arising from the direct application of Chebyshev's inequality, which will be useful in applications. To this end, note that 
\begin{align}\label{eq:consistent_2moment} 
\lim_{n\to\infty} \frac{1}{ N(H, G_n)^2} \sum_{\substack{ A  \subset V(G_n) \\ 1 \leq |A| \leq |V(H)|}} \frac{t_H(A)^2}{p_n^{|A|}}=0
\end{align} 
is a sufficient condition for \eqref{e:cond2}, since 
$$ t_H(A)\ind\{t_H(A)>\e p_n^{|A|}  N(H, G_n)\} \leq  \frac{t_H(A)^2}{\e p_n^{|A|} N(H, G_n)}.$$ 
The condition in \eqref{eq:consistent_2moment}, which does not require any truncations,  is often easier to verify, as will be seen in the examples discussed below. To derive \eqref{eq:consistent_2moment} without using \eqref{e:cond2}, use  Chebyshev's inequality to note that a straightforward sufficient condition for the consistency of the estimate $\hat N(H, G_n)$ is that $\vr[\hat N(H, G_n)] = o(\hat N(H, G_n)^2)$. This last condition is equivalent to \eqref{eq:consistent_2moment}, as can be seen by invoking Lemma \ref{compare} to get the estimate 
\begin{align*}
\mathrm{Var}(\hat N(H, G_n) ) & = \Theta\left( \sum_{\substack{ A  \subset V(G_n) \\ 1 \leq |A| \leq |V(H)|}}  \frac{t_H(A)^2}{p_n^{|A|}} \right).
\end{align*} 
Even though the variance condition \eqref{eq:consistent_2moment} is natural and often easier to verify, it is not necessary for consistency, as shown in the example below.

\begin{example} (The variance condition is not necessary for consistency) Let $H=K_2$ be the edge, and $G_n$ be the disjoint union of an $a_n$-star $K_{1, a_n}$ and $b_n$ disjoint edges, with $a_n\ll b_n\ll a_n^{3/2}$. Then, 
\begin{align}\label{eq:ve}
|V(G_n)|=a_n+1+2b_n=(1+o(1))  2b_n,\quad N(H, G_n)=|E(G_n)| = a_n+b_n=(1+o(1))  b_n.
\end{align} 
In this case, the HT estimator is consistent whenever the sampling probability $p_n$ satisfies $\frac{1}{\sqrt{b_n}}\ll p_n\ll a_n^2/b_n.$ To see this, note that $p_n^2|E(G_n)|=(1+o(1))  p_n^2 b_n\gg 1$, that is, the first condition in \eqref{eq:edge_condition} holds. Also, fixing $\varepsilon>0$ and noting that $p_n|E(G_n)|=(1+o(1))  p_n b_n\gg 1$  implies, for all $n$ large only the central vertex of the $a_n$-star satisfies the $d_v>\e p_n|E(G_n)|$ cutoff. Hence, 
\[ \sum_{v=1}^{|V(G_n)|}d_v{\bf 1}\{d_v>\varepsilon p_n|E(G_n)\}=a_n=o(b_n),\]
verifying second condition in \eqref{eq:edge_condition}.  However, since 
	\[\frac{1}{p_n|E(G_n)|^2}\sum_{v=1}^{|V(G_n)|} d_v^2 = \frac{1}{p_n b_n^2}\Big(a_n^2+a_n+b_n\Big)=(1+o(1))  \frac{a_n^2}{p_n b_n^2}\rightarrow\infty ,\]
the variance condition \eqref{eq:consistent_2moment} does not hold. Thus for this example one needs the full strength of Theorem \ref{thm:consistency} to show that the HT estimator is consistent.
\end{example}

\subsection{Asymptotic Normality of the HT Estimator}
\label{sec:normal_ZHGn} 

In this section, we determine the precise conditions under which the HT estimator is asymptotically normal. For this, recall the definition of $Z(H, G_n)$ from \eqref{eq:ZHGn_I},   
\begin{align}\label{eq:ZHGn} 
Z(H, G_n):= \frac{\hat N(H, G_n)- N(H, G_n)}{\sqrt{\vr[\hat N(H, G_n)]}} = \frac{T(H, G_n)-p_n^{|V(H)|}N(H, G_n)}{\sigma(H,G_n)},
\end{align}
where $\sigma(H,G_n)^2:=\vr[T(H,G_n)]$. To begin with, one might wonder whether the conditions which ensure the consistency of $\hat N(H, G_n)$ is enough to imply the asymptotic normality of $Z(H, G_n)$. However, it is easy to see that this is not the case. In fact, there are examples where  $\hat N(H, G_n)$ is consistent, but $Z(H, G_n)$ has a non-Gaussian limiting distribution (see Example \ref{example:consistent} in Appendix \ref{sec:examples}). Hence, to establish the asymptotic normality of $Z(H, G_n)$ additional conditions are needed. To state our result we need the following definition: 

\begin{definition}\label{eq:connected}
Fix $r \geq 1$. Given a collection of $r$ tuples $\{\s_1,\s_2, \ldots, \s_r \}$ from $V(G_n)_{|V(H)|}$, let $\mathcal{G}(\s_1,\ldots,\s_r)$ be the simple graph with vertex set $\{\s_1,\ldots,\s_r\}$, with an edge between $\s_i$ and $\s_j$ whenever $\bar \s_i\cap \bar \s_j\ne \emptyset$ (see Figure \ref{fig:Gs} for an illustration). We will say the collection $\{\s_1,\ldots,\s_r\}$ is {\it connected}, if the graph $\mathcal{G}(\s_1,\ldots,\s_r)$ is connected. The set of all $r$ tuples $\{\s_1,\ldots,\s_r\}$ in $V(G_n)_{|V(H)|}$ such that the collection $\{\s_1,\ldots,\s_r\}$ is connected will be denoted by $\mathcal{K}_{n,r}$. 
\end{definition}

\begin{figure*}[h]
\centering
\begin{minipage}[c]{1.0\textwidth}
\centering
\includegraphics[width=2.45in]
    {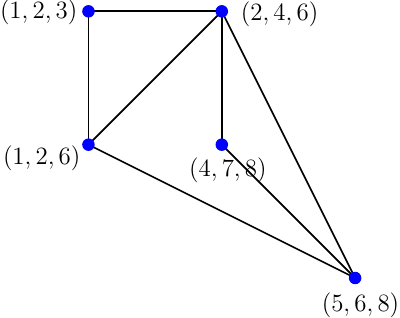}\\
\end{minipage} 
\caption{\small{The graph $\mathcal{G}(\s_1, \s_2, \s_3, \s_4, \s_5)$ as in Definition \ref{eq:connected} with $\s_1=(1, 2, 3)$, $\s_2=(1, 2, 6)$, $\s_3=(4, 7, 8)$, $\s_4=(2, 4, 6)$, and $\s_5=(5, 6, 8)$.}}
\label{fig:Gs}
\end{figure*}

Now, denote by $W_n$ the random variable 
\begin{align}\label{W_n:defn}
W_n :=\sum_{\{\s_1,\s_2,\s_3,\s_4\}\in \mathcal{K}_{n, 4}} |Y_{\s_1}Y_{\s_2}Y_{\s_3}Y_{\s_4}|,
\end{align} 
where $Y_{\s}: =  \frac{1}{|Aut(H)|} \prod_{(i,j)\in E(H)} a_{s_i s_j} (X_{\s}-p_n^{|V(H)|})$. In the following theorem we give a quantitative error bound (in terms of the Wasserstein distance) between $Z(H, G_n)$ and the standard normal distribution $N(0, 1)$, in terms of the expected value of the random variable $W_n$. To this end, recall that the Wasserstein distance between random variables $X \sim \mu$ and $Y \sim \nu$ on $\R$ is defined as 
$$\mathrm{Wass}(X, Y) = \sup \left\{\left|\int f \mathrm d\mu - \int f \mathrm d \nu \right| : f \text{ is } 1-\text{Lipschitz}\right\},$$
where a function $f: \mathbb R \rightarrow \mathbb R$ is 1-Lipschitz if $|f(x) - f(y)| \leq |x-y|$, for all $x, y \in \mathbb R$.

\begin{theorem}\label{thm:wass}Fix a connected graph $H$, a network $G_n=(V(G_n),E(G_n))$, and a sampling ratio $p_n$ which satisfies \eqref{eq:boundp}. Then 
	\begin{align}\label{e:wass}
	\operatorname{Wass}(Z(H, G_n), N(0, 1)) \lesssim \frac{|V(H)|}{(1-\kappa)^3}\cdot\sqrt{\frac{\Ex[W_n]}{\sigma(H,G_n)^4}},
	\end{align}
where $Z(H, G_n)$ and $W_n$ are as defined in \eqref{eq:ZHGn} and \eqref{W_n:defn}, respectively. Moreover, if $p_n \in \left(0, \tfrac1{20}\right]$, then $\frac{\Ex[W_n]}{\sigma(H,G_n)^4} \le \Ex[Z(H,G_n)^4]-3$ and, as a consequence,  
\begin{align}\label{e:wass_II}
\operatorname{Wass}(Z(H, G_n), N(0, 1)) \lesssim |V(H)|\cdot\sqrt{\Ex[Z(H,G_n)^4]-3},
\end{align}
\end{theorem}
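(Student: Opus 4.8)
\emph{Proof strategy.} Throughout write $Z:=Z(H,G_n)$ and $\sigma:=\sigma(H,G_n)$, so that $Z=\sigma^{-1}\sum_{\s}Y_\s$ with the sum over $\s\in V(G_n)_{|V(H)|}$ and $Y_\s$ as in the statement. The plan is to prove \eqref{e:wass} by Stein's method for normal approximation of a sum of locally dependent summands, and then to obtain \eqref{e:wass_II} from a separate comparison of $\Ex[W_n]$ with $\Ex[Z^4]-3$ that holds when $p_n$ is small. Two facts about the $Y_\s$ are used throughout. First, $|Y_\s|\le 1/|\aut(H)|$ deterministically, since $M_H(\s),X_\s\in\{0,1\}$. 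Second, since $X_\s=\prod_u X_{s_u}$ is a coordinatewise nondecreasing function of the independent $\mathrm{Ber}(p_n)$ variables $\{X_v\}$, the FKG inequality gives $\Ex[Y_\s Y_{\s'}]\ge 0$ for all $\s,\s'$, hence $\vr(\sum_{\s\in\mathcal S}Y_\s)\le\sigma^2$ for every set of tuples $\mathcal S$. It is also convenient to record the Hoeffding/chaos decomposition $\hat N(H,G_n)-N(H,G_n)=\sum_{j=1}^{|V(H)|}p_n^{-j}\sum_{|S|=j}t_H(S)\prod_{v\in S}\bar X_v$ (with $\bar X_v:=X_v-p_n$), coming from $X_\s-p_n^{|V(H)|}=\sum_{\emptyset\ne S\subseteq\bar\s}p_n^{|V(H)|-|S|}\prod_{v\in S}\bar X_v$; it exhibits $Z$ as a normalized sum of homogeneous multilinear forms of orders $1,\dots,|V(H)|$ in the centered Bernoulli variables, with nonnegative coefficients $t_H(S)$.

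For \eqref{e:wass}, fix a $1$-Lipschitz test function $h$ and let $f$ solve the Stein equation $f'(x)-xf(x)=h(x)-\Ex h(N)$, $N\sim N(0,1)$, so that $\|f'\|_\infty\le\sqrt{2/\pi}$ and $\|f''\|_\infty\le 2$. For each tuple $\s$ set $N_\s:=\{\s':\bar\s'\cap\bar\s\ne\emptyset\}$ and $Z^{(\s)}:=Z-\sigma^{-1}\sum_{\s'\in N_\s}Y_{\s'}$, which is independent of $Y_\s$. Using $\Ex Y_\s=0$ together with a second-order Taylor expansion of $f(Z)-f(Z^{(\s)})$ and a first-order one of $f'(Z)-f'(Z^{(\s)})$, and summing over $\s$, one gets a bound of the form
\begin{equation*}
\bigl|\Ex h(Z)-\Ex h(N)\bigr|\;\le\;\sqrt{2/\pi}\,\Ex\bigl|1-V\bigr|\;+\;C\,\sigma^{-3}\sum_{\s}\Ex\Bigl[|Y_\s|\Bigl(\sum_{\s'\in N_\s}Y_{\s'}\Bigr)^{2}\Bigr],
\end{equation*}
with $C$ an absolute constant and $V:=\sigma^{-2}\sum_{\s}Y_\s\sum_{\s'\in N_\s}Y_{\s'}$. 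Since $\Ex[Y_\s Y_{\s'}]=0$ for $\s'\notin N_\s$, one has $\Ex V=\sigma^{-2}\vr(\sum_\s Y_\s)=1$, so the first term is at most $\sqrt{2/\pi}\,\sqrt{\vr(V)}$. Expanding $\vr(V)=\sigma^{-4}\vr\bigl(\sum_{\s}Y_\s\sum_{\s'\in N_\s}Y_{\s'}\bigr)$ into covariances $\operatorname{Cov}(Y_{\s_1}Y_{\s_2},Y_{\s_3}Y_{\s_4})$ with $\s_2\in N_{\s_1}$, $\s_4\in N_{\s_3}$, each covariance vanishes unless the collection $\{\s_1,\s_2,\s_3,\s_4\}$ is connected in the sense of Definition~\ref{eq:connected}, and otherwise is at most $\Ex|Y_{\s_1}Y_{\s_2}Y_{\s_3}Y_{\s_4}|+\Ex|Y_{\s_1}Y_{\s_2}|\,\Ex|Y_{\s_3}Y_{\s_4}|$, the second product being reduced to connected-$4$-tuple products using $|Y_\s|\le 1/|\aut(H)|$; hence $\vr(V)\le C_H\,\Ex[W_n]/\sigma^4$. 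The third-order remainder is a sum over connected triples; using again $|Y_\s|\le 1/|\aut(H)|$, FKG (which gives $\vr(\sum_{\s'\in N_\s}Y_{\s'})\le\sigma^2$ and, through the multilinear structure, sharp control of the conditional second moments $\Ex[(\sum_{\s'\in N_\s}Y_{\s'})^2\mid X_\s=1]$), and the fact that $W_n$ counts connected $4$-tuples with coincidences among the $\s_i$ allowed, one bounds it by $C_H\bigl(\Ex[W_n]/\sigma^4+(\Ex[W_n]/\sigma^4)^{1/2}\bigr)$. Finally, since $Z$ has mean $0$ and variance $1$, $\operatorname{Wass}(Z,N(0,1))\le\Ex|Z|+\Ex|N|\le 2$, so we may assume $\Ex[W_n]/\sigma^4\le 1$, in which case $\Ex[W_n]/\sigma^4\le(\Ex[W_n]/\sigma^4)^{1/2}$; taking the supremum over $1$-Lipschitz $h$ yields \eqref{e:wass}.

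For \eqref{e:wass_II} it is enough, in view of \eqref{e:wass}, to prove $\Ex[W_n]/\sigma^4\le\Ex[Z^4]-3$ when $p_n\le\tfrac1{20}$. Using $\sigma^2=\sum_{\s,\s'}\Ex[Y_\s Y_{\s'}]$ and expanding the fourth power,
\begin{equation*}
\Ex[Z^4]-3=\sigma^{-4}\sum_{\s_1,\s_2,\s_3,\s_4}\Bigl(\Ex\bigl[\textstyle\prod_i Y_{\s_i}\bigr]-\Ex[Y_{\s_1}Y_{\s_2}]\Ex[Y_{\s_3}Y_{\s_4}]-\Ex[Y_{\s_1}Y_{\s_3}]\Ex[Y_{\s_2}Y_{\s_4}]-\Ex[Y_{\s_1}Y_{\s_4}]\Ex[Y_{\s_2}Y_{\s_3}]\Bigr).
\end{equation*}
The contributions of non-connected collections $\{\s_1,\s_2,\s_3,\s_4\}$ cancel against the subtracted pairing products (an isolated $\s_i$ kills every term, and a two-block split leaves the single matching pairing, which cancels the corresponding subtracted term while the other two vanish), so only connected collections survive. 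For such a collection, substituting the chaos expansion of each $Y_{\s_i}$ and taking expectations reduces everything to products of the centered-Bernoulli moments $\mu_2=p_n(1-p_n)$, $\mu_3=\mu_2(1-2p_n)$, $\mu_4=\mu_2(1-3p_n+3p_n^2)$, all positive; one then tracks powers of $p_n$: the leading-order contribution to $\Ex[\prod_i Y_{\s_i}]$ is positive and of the same order as $\Ex|\prod_i Y_{\s_i}|$, while the subtracted pairing products carry strictly higher powers of $p_n$ (connectivity forces an overlap straddling any two-block split). The requirement $p_n\le\tfrac1{20}$ — which in particular makes $\mu_2<\tfrac16$ and hence $\mu_4-3\mu_2^2=\mu_2(1-6\mu_2)>0$ — is what is needed to make these lower-order corrections small enough that, after summing over all connected collections, the right-hand side is at least $\sigma^{-4}\Ex[W_n]$. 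Combining with \eqref{e:wass} gives \eqref{e:wass_II}.

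The step I expect to be hardest is this last one — the inequality $\Ex[W_n]/\sigma^4\le\Ex[Z^4]-3$ for $p_n\le\tfrac1{20}$ — which requires a careful case analysis of the overlap patterns of $(\bar\s_1,\dots,\bar\s_4)$ and of the associated powers of $p_n$, and is where the numerical threshold comes from. A secondary difficulty is the sharp bound on the third-order Stein remainder: the usual Cauchy--Schwarz/Hölder estimates are too lossy there (they cost a factor of order $p_n^{-1/2}$), and one must exploit the multilinear structure of $Z$, with the form of $W_n$ — connected $4$-tuples, coincidences allowed — engineered precisely so as to absorb these terms.
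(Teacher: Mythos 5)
Your proposal is correct and follows essentially the same route as the paper: Stein's method with local dependency neighborhoods, the quadratic term bounded by $\sqrt{\vr(V)}\le C_H\sqrt{\Ex[W_n]}/\sigma^2$, the cubic remainder bounded by the same quantity, and the fourth-moment comparison obtained by showing that for a connected $4$-tuple the straddling overlap costs an extra factor of $p_n$, so that for $p_n\le\tfrac1{20}$ the pairing products are dominated by $\Ex[Y_{\s_1}Y_{\s_2}Y_{\s_3}Y_{\s_4}]$, which is in turn comparable to $\Ex|Y_{\s_1}Y_{\s_2}Y_{\s_3}Y_{\s_4}|$. The only step you leave under-specified — the sharp bound on the connected-triple sum — is handled in the paper not through conditional second moments but by a Cauchy--Schwarz inequality applied to the sums over tuples with fixed overlap sizes, comparing $\bigl(\sum_{\s_1,\s_2,\s_3}\alpha(\s_1)\alpha(\s_2)\alpha(\s_3)p_n^{|\bar\s_1\cup\bar\s_2\cup\bar\s_3|}\bigr)^2$ against $\sigma^2\,\Ex[W_n]$, which delivers exactly the $\sigma\sqrt{\Ex[W_n]}$ bound without the extra additive term you carry.
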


The proof of this result is given in Appendix \ref{sec:clt_pf}. In addition to giving an  explicit rate of convergence between $Z(H, G_n)$ and $N(0, 1)$, Theorem \ref{thm:wass} shows that for $p_n$ small enough, the asymptotic normality of the (standardized) HT estimator exhibits a curious fourth-moment phenomenon, that is, $Z(H, G_n) \stackrel{D} \rightarrow  N(0, 1)$ whenever $\Ex[Z(H,G_n)^4] \rightarrow 3$ (the fourth moment of the standard normal distribution). The proof uses Stein's method for normal approximation \cite{barbour2005introduction,clt_normal,Ste72} and is a consequence of more general result about the asymptotic normality and the fourth-moment phenomenon of certain random multilinear forms in Bernoulli variables, which might be of independent interest (Theorem \ref{thm:wass1}).

\begin{remark} \label{remark:4moment} 
The fourth moment phenomenon was first discovered by Nualart and Peccati \cite{nualart2005central}, who showed that the convergence of the first, second, and fourth moments to $0, 1$, and $3$, respectively, guarantees asymptotic normality for a sequence of multiple stochastic Wiener-It\^o integrals of fixed order. Later, Nourdin and Peccati \cite{NoPe09} provided an error bound for the fourth moment theorem of \cite{nualart2005central}. The fourth moment phenomenon has since then emerged as a unifying principle governing the central limit theorems for various non-linear functionals of random fields \cite{BDM,peccati_book,nourdin2010invariance}.  We refer the reader to the book \cite{book_np} for an introduction to the topic and the website \url{https://sites.google.com/site/malliavinstein/home} for a list of the recent results. The result in Theorem \ref{thm:wass} is an example of the fourth-moment phenomenon in the context of motif estimation. In fact, the result in Section \ref{sec:pf_normal_ZHGn} on the asymptotic normality of general random multilinear forms suggests that the fourth-moment phenomenon is more universal, and we expect it to emerge in various other combinatorial estimation problems, where counting statistics similar to $T(H, G_n)$ arise naturally. 
\end{remark}

\begin{remark} Note that the result in \eqref{e:wass_II} requires an upper bound on the sampling ratio $p_n \leq \frac{1}{20}$. This condition ensures that the leading order of the central moments of $T(H, G_n)$ is the same as the leading order of its raw moments (as shown in Lemma \ref{abs-bound}), a fact which is used to estimate the error terms arising from the Stein's method calculations. Interestingly, it is, in fact, necessary to assume an upper bound on $p_n$ for the limiting normality and the fourth-moment phenomenon of the HT estimator to hold (see Example \ref{example:ab} in Appendix \ref{sec:examples}). This example constructs a sequence of graphs $\{G_n\}_{n \geq 1}$ for which if $p_n$ is chosen large enough, then $\Ex[Z(K_2, G_n)^4] \rightarrow 3$, but $Z(K_2, G_n)$ does not converge to $N(0, 1)$.  However, in applications,  where it is natural to chose $p_n \ll 1$ to have any significant reduction in the size of the sampled graph, the fourth moment phenomenon always holds.
\end{remark}

We now discuss how the results above can be used to construct asymptotically valid confidence intervals  for the parameter $N(H, G_n)$. To this end, we need to consistently estimate $\sigma(H, G_n)^2$, the variance of $T(H, G_n)$. The following result shows that it is possible to consistently estimate $\sigma(H, G_n)^2$ whenever the error term in \eqref{e:wass} goes to zero, which combined with the asymptotic normality of $Z(H, G_n)$ gives a confidence for $N(H, G_n)$ with asymptotic coverage probability $1-\alpha$.

\begin{proposition}\label{ppn:variance_estimation}  
Fix a connected graph $H$, a network $G_n=(V(G_n),E(G_n))$, and a sampling ratio $p_n$ which satisfies \eqref{eq:boundp}. Suppose $\Ex[W_n]=o(\sigma(H, G_n)^4)$, where $W_n$ is as defined in \eqref{W_n:defn}. Then the following hold,  as $n \rightarrow \infty$: 

\begin{itemize} 

\item[(a)] The HT estimator $\hat N(H,G_n)$ is consistent for $N(H,G_n)$.

\item[(b)] Let 
	{$$\hat \sigma(H,G_n)^2:=\frac1{|\ah|^2}\sum_{K=1}^{|V(H)|}  \sum_{\substack{\s_1, \s_2 \in V(G_n)_{|V(H)|} \\ 
			| \bar \s_1 \bigcap \bar \s_2 | = K }} (1-p_n^{K})M_H(\s_1)M_H(\s_2)X_{\s_1}X_{\s_2}.$$}
Then $\hat \sigma(H,G_n)^2$ is a consistent estimate of $\sigma(H,G_n)^2$, that is, $\frac{\hat \sigma(H,G_n)^2}{\sigma(H,G_n)^2} \stackrel{P} \rightarrow 1$. 
	
\item[(c)] Let $\hat \sigma(H,G_n)_+:=\sqrt{\max(0, \hat \sigma(H,G_n)^2)}$. Then, as $n \rightarrow \infty$,  
$$\mathbb P\left(N(H, G_n) \in \left[\hat N(H,G_n) -  z_{\frac{\alpha}{2}} \frac{\hat \sigma(H,G_n)_+}{p_n^{|V(H)|}}, \hat N(H,G_n)   +  z_{\frac{\alpha}{2}} \frac{\hat \sigma(H,G_n)_+}{p_n^{|V(H)|}}  \right] \right) \rightarrow 1-\alpha, $$ 
where $z_{\frac{\alpha}{2}}$ is the $(1-\frac{\alpha}{2})$-th quantile of the standard normal distribution $N(0, 1)$. 	 
\end{itemize}	
\end{proposition}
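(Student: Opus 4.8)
The plan is to treat the three parts in order, since each builds on the previous one. For part~(a), the hypothesis $\Ex[W_n] = o(\sigma(H,G_n)^4)$ is exactly the statement that the right-hand side of \eqref{e:wass} in Theorem~\ref{thm:wass} tends to zero, hence $Z(H,G_n) \stackrel{D}{\to} N(0,1)$; in particular $\hat N(H,G_n) - N(H,G_n) = O_P(\sqrt{\vr[\hat N(H,G_n)]})$. To upgrade this to consistency in the ratio sense, I would note that $\sigma(H,G_n)^2 = \vr[T(H,G_n)]$ is, by Lemma~\ref{compare} (the variance estimate quoted just before this subsection), of order $\sum_{1\le|A|\le|V(H)|} t_H(A)^2/p_n^{|A|}$, and that $W_n$ dominates (up to $H$-dependent constants) the ``diagonal'' contribution $\sigma(H,G_n)^4$ coming from pairing $\s_1=\s_2$, $\s_3=\s_4$ in \eqref{W_n:defn}; so $\Ex[W_n] = o(\sigma^4)$ forces $\sigma(H,G_n)^2 = o(p_n^{2|V(H)|} N(H,G_n)^2)$, which is $\vr[\hat N(H,G_n)] = o(N(H,G_n)^2)$, and Chebyshev gives $\hat N(H,G_n)/N(H,G_n)\stackrel{P}{\to}1$.

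For part~(b) I would compute $\Ex[\hat\sigma(H,G_n)^2]$ and $\vr[\hat\sigma(H,G_n)^2]$ directly. Unbiasedness is immediate: expanding the product $(X_{\s_1}-p_n^{|V(H)|})(X_{\s_2}-p_n^{|V(H)|})$ and using $\Ex[X_{\s_1}X_{\s_2}] = p_n^{|\bar\s_1\cup\bar\s_2|}$, each term of $\hat\sigma^2$ with $\bar\s_1\cap\bar\s_2\ne\emptyset$ has expectation equal to the corresponding covariance term $\mathrm{Cov}(M_H(\s_1)X_{\s_1}, M_H(\s_2)X_{\s_2})$, and summing over such pairs gives exactly $\sigma(H,G_n)^2$ (the pairs with $\bar\s_1\cap\bar\s_2=\emptyset$ contribute zero covariance and are correctly omitted). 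It then remains to show $\vr[\hat\sigma(H,G_n)^2] = o(\sigma(H,G_n)^4)$, after which Chebyshev yields the claimed ratio consistency. The variance of $\hat\sigma^2$ is a sum over quadruples $(\s_1,\s_2,\s_3,\s_4)$ with $\bar\s_1\cap\bar\s_2\ne\emptyset$ and $\bar\s_3\cap\bar\s_4\ne\emptyset$, and a covariance between the two pair-terms is nonzero only if $(\bar\s_1\cup\bar\s_2)\cap(\bar\s_3\cup\bar\s_4)\ne\emptyset$, so that the whole quadruple is connected in the sense of Definition~\ref{eq:connected}; bounding $|X_\s - p_n^{|V(H)|}|\le 1$ and $M_H(\s)\in\{0,1\}$, each such covariance is $O_H(1)$, so $\vr[\hat\sigma^2] = O_H\big(\#\{\text{connected 4-tuples}\}\big) = O_H(\Ex[W_n]/p_n^{c})$ for an appropriate constant — more carefully, I would relate the count of connected quadruples of $M_H$-supported tuples to $\Ex[W_n]$ by observing that $\Ex|Y_{\s_1}Y_{\s_2}Y_{\s_3}Y_{\s_4}| \asymp_H \prod M_H(\s_i)$ when $p_n$ is bounded away from $1$ (each factor $\Ex|X_\s - p_n^{|V(H)|}|$ is $\Theta(1)$), so that $\#\{\text{connected 4-tuples of supported }\s\text{'s}\} \asymp_H \Ex[W_n]$, giving $\vr[\hat\sigma^2] = O_H(\Ex[W_n]) = o(\sigma^4)$ by hypothesis.

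Part~(c) is then a routine assembly: write the confidence interval in terms of $Z(H,G_n)$. We have
\[
\mathbb P\Big(N(H,G_n)\in\big[\hat N \mp z_{\alpha/2}\hat\sigma_+ / p_n^{|V(H)|}\big]\Big)
= \mathbb P\Big(|\hat N(H,G_n) - N(H,G_n)| \le z_{\alpha/2}\,\hat\sigma_+/p_n^{|V(H)|}\Big),
\]
and since $\hat N - N = (T - p_n^{|V(H)|}N)/p_n^{|V(H)|} = \sigma(H,G_n) Z(H,G_n)/p_n^{|V(H)|}$, the event inside equals $\{|Z(H,G_n)| \le z_{\alpha/2}\,\hat\sigma_+/\sigma(H,G_n)\}$. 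By part~(b), $\hat\sigma(H,G_n)^2/\sigma(H,G_n)^2 \stackrel{P}{\to}1$, hence $\hat\sigma_+/\sigma(H,G_n)\stackrel{P}{\to}1$ (the truncation at $0$ is eventually inactive with probability $\to1$); combining this with $Z(H,G_n)\stackrel{D}{\to}N(0,1)$ via Slutsky's theorem gives $\mathbb P(|Z(H,G_n)|\le z_{\alpha/2}\hat\sigma_+/\sigma(H,G_n)) \to \mathbb P(|N(0,1)|\le z_{\alpha/2}) = 1-\alpha$, using that $z_{\alpha/2}$ is a continuity point of the limiting law.

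The main obstacle is the variance bound in part~(b): carefully organizing the sum over quadruples of tuples by their intersection pattern, showing that only the connected ones contribute, and matching that count (up to $H$-dependent constants) against $\Ex[W_n]$ so that the hypothesis $\Ex[W_n]=o(\sigma^4)$ can be invoked — this is the only place where genuine combinatorial bookkeeping, rather than moment algebra or soft convergence arguments, is required.
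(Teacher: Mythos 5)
Your overall architecture (Chebyshev for (a), unbiasedness plus a variance bound for (b), Slutsky for (c)) is the same as the paper's, and part (c) is fine as written, but both of the quantitative steps that actually carry the proof are misjustified.

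In (a), the claim that ``$W_n$ dominates the diagonal contribution $\sigma(H,G_n)^4$ coming from pairing $\s_1=\s_2$, $\s_3=\s_4$'' cannot be correct: if $\Ex[W_n]\gtrsim_H \sigma(H,G_n)^4$ held, the hypothesis $\Ex[W_n]=o(\sigma(H,G_n)^4)$ could never be satisfied. The diagonal terms of \eqref{W_n:defn} have expectation of order $\sigma(H,G_n)^2$, not $\sigma(H,G_n)^4$, and only quadruples with $(\bar\s_1\cup\bar\s_2)\cap(\bar\s_3\cup\bar\s_4)\ne\emptyset$ enter $W_n$ --- these are exactly the part of $\sigma(H,G_n)^4$ that the hypothesis declares negligible. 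So your deduction of $\sigma(H,G_n)^2=o\big(p_n^{2|V(H)|}N(H,G_n)^2\big)$ is unsupported. The implication is true but needs genuine work: the paper establishes $\sigma(H,G_n)^6\lesssim_H \Ex[W_n]\,(\Ex[T(H,G_n)])^2$ (see \eqref{eq:gen_bound}) by lower-bounding $\Ex[W_n]$ by $p_n^{|V(H)|+3L}\sum_{\s_1}M_H(\s_1)\big(\sum_{\s_2:|\bar\s_2\setminus\bar\s_1|=L}M_H(\s_2)\big)^3$ and applying H\"older against $\big(\sum_{\s}M_H(\s)\big)^2$; only with that inequality does $\Ex[W_n]=o(\sigma^4)$ yield $\sigma(H,G_n)^2=o((\Ex[T(H,G_n)])^2)$, after which Chebyshev finishes.

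In (b), the ``more careful'' step rests on the false identity $\Ex|Y_{\s_1}Y_{\s_2}Y_{\s_3}Y_{\s_4}|\asymp_H\prod_i M_H(\s_i)$: by Lemma~\ref{abs-bound} the correct order is $\prod_i M_H(\s_i)\cdot p_n^{|\bar\s_1\cup\bar\s_2\cup\bar\s_3\cup\bar\s_4|}$, so the count of connected supported quadruples can exceed $\Ex[W_n]$ by a factor as large as $p_n^{-4|V(H)|}$, and bounding each covariance by $O_H(1)$ loses exactly those powers of $p_n$. The fix is to not discard the moments at all: since $\Ex[Y_{\s_1}Y_{\s_2}]\ge 0$ and $\Ex[Y_{\s_3}Y_{\s_4}]\ge 0$ by Lemma~\ref{free-bound}, each term obeys $\operatorname{Cov}(Y_{\s_1}Y_{\s_2},Y_{\s_3}Y_{\s_4})\le \Ex[Y_{\s_1}Y_{\s_2}Y_{\s_3}Y_{\s_4}]\le \Ex|Y_{\s_1}Y_{\s_2}Y_{\s_3}Y_{\s_4}|$, and summing over connected quadruples gives $\vr[\hat\sigma(H,G_n)^2]\le \Ex[W_n]=o(\sigma(H,G_n)^4)$ directly; this is precisely the bound \eqref{eq:A2_II} that the paper invokes, and no combinatorial counting is needed.
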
 

The proof of this result is given in Appendix \ref{sec:variance_estimation_pf}. The proof of (a) entails showing that $\sigma(H,G_n)^2=o((\Ex[T(H,G_n)])^2)$. This is a consequence of the assumption $\Ex[W_n]=o(\sigma(H, G_n)^4)$ and the more general bound $\sigma(H,G_n)^6 \lesssim_H \Ex[W_n] (\Ex[T(H,G_n)])^2$,  which can be proved by expanding out the terms and an application of the H\"older's inequality. For (b), note that  $\hat \sigma(H,G_n)^2$ is an unbiased estimate of $\sigma(H,G_n)^2$, hence, to prove the consistency of  $\hat \sigma(H,G_n)^2$ it suffices to show that $\vr[\hat \sigma(H,G_n)^2]=o(\sigma(H, G_n)^4)$, under the given assumptions. Finally, (c) is an immediate consequence of (b) and the asymptotic normality of $Z(H, G_n)$ proved in Theorem \ref{thm:wass}.

Given the result in Theorem \ref{thm:wass}, it is now natural to wonder whether the convergence of the fourth moment $\Ex[Z(H, G_n)^4]\rightarrow 3$ is necessary for the asymptotic normality of $Z(H, G_n)$. This however turns out to be not the case. In fact, Example \ref{example:4_normal} gives a sequence of graphs $\{G_n\}_{n \geq 1}$ for which $Z(K_2, G_n)$ is asymptotic normal, but $\Ex[Z(K_2, G_n)^4] \nrightarrow 3$,  showing that the (untruncated) fourth-moment condition is not necessary for the asymptotic normality of the HT estimator. As we will see, in this example the graph $G_n$ has a few `high' degree vertices which forces  $\Ex[Z(H, G_n)^4]$ to diverge. However, the existence of a `small' number of high degree vertices does not effect the distribution of the rescaled statistic. This suggests that, as in the case of consistency in Theorem \ref{thm:consistency}, to obtain the precise condition for the asymptotic normality of $Z(H, G_n)$ we need to appropriately truncate the graph $G_n$, by removing a small number of hubs with `high' local count functions, and  consider the moments of the truncated statistic. Towards this end, fix $M > 0$ and define the event 
\begin{align}\label{eq:C_truncation}
\mathcal{C}_M(A)=\{t_H(A)^2> M p_n^{2|A|-2|V(H)|}\vr[T(H, G_n)]\},
\end{align} 
and 
$\mathcal{C}_{M}(\s)^c=\bigcap_{ A \subseteq \s: A  \ne \emptyset} \mathcal{C}_M(A)^c$. (For any set $A$, $A^c$ denotes the complement of $A$.) Then consider the truncated statistic, 
	\begin{align} \label{def:trunc}
	T_M^{\circ}(H, G_n):=\frac1{|\ah|}\sum_{\s\in V(G_n)_{|V(H)|}}M_H(\s) X_{\s}\ind\{\mathcal{C}_M(\s)^c\}, 
	\end{align}
and define 
\begin{align}\label{eq:truncation_Z}
Z_M^{\circ}(H, G_n):=\frac{T_M^{\circ}(H, G_n)-\Ex[T_M^{\circ}(H, G_n)]}{\sigma(H,G_n)}. 
\end{align} 
The following theorem gives a necessary and sufficient condition for asymptotic normality for $Z(H, G_n)$ in the terms of the second and fourth moments of the truncated statistic \eqref{def:trunc}. 

\begin{theorem}\label{thm:normality} Suppose $G_n=(V(G_n),E(G_n))$ is a sequence of graphs, with $|V(G_n)| \rightarrow \infty$, and $H$ is a fixed connected graph. Then, given a sampling ratio $p_n \in (0, \frac{1}{20}]$, the rescaled statistic  
$Z(H, G_n) \stackrel{D} \rightarrow N(0, 1)$
if and only if 
\begin{align}\label{e:4th} &
\limsup_{M \to\infty}\limsup_{n\to\infty}|\Ex[Z_M^{\circ}(H, G_n)^2]-1|=0, \text{ and }  \limsup_{M \to\infty}\limsup_{n\to\infty}  |\Ex[Z_M^{\circ}(H, G_n)^4]-3|=0 , 
\end{align}  
holds simultaneously.
\end{theorem}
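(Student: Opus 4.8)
The plan is to reduce Theorem \ref{thm:normality} to Theorem \ref{thm:wass} via a truncation argument, treating the two implications separately. For the "if" direction, I would show that the truncated statistic $Z_M^{\circ}(H, G_n)$ is close to $Z(H, G_n)$ in an appropriate sense whenever $M$ is large, and that $Z_M^{\circ}$ is asymptotically normal whenever its second and fourth moments converge to $1$ and $3$. For the "only if" direction, I would show that asymptotic normality of $Z(H, G_n)$ forces the truncated moments to converge to the Gaussian values.

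\medskip

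\noindent\textbf{The "if" direction.} Assume \eqref{e:4th} holds. The key observation is that $T_M^{\circ}(H, G_n)$ is itself a multilinear form in the Bernoulli variables $\{X_v\}$, obtained by keeping only those tuples $\s$ all of whose nonempty subsets $A$ have local count $t_H(A)$ below the threshold in \eqref{eq:C_truncation}. I would first argue that, on the event $\mathcal{C}_M(\s)$, the "$W_n$-type" fourth-moment quantity built from $T_M^{\circ}$ — call it $W_n^{\circ}(M)$ — is controlled: because every tuple contributing to $T_M^{\circ}$ has all its local counts bounded by $\sqrt{M}\, p_n^{|V(H)|-|A|}\sigma(H,G_n)$, the contribution of connected quadruples $\{\s_1,\s_2,\s_3,\s_4\}\in\mathcal{K}_{n,4}$ can be bounded, after expanding and using H\"older as in the proof of Theorem \ref{thm:wass}, by a quantity that is $O_M\big((\Ex[Z_M^{\circ}(H,G_n)^4]-3 + (\Ex[Z_M^{\circ}(H,G_n)^2]-1)^2 + o(1))\,\sigma(H,G_n)^4\big)$; here the structure is the same as the proof that $\Ex[W_n]/\sigma^4 \le \Ex[Z^4]-3$ but now applied to the truncated form and with its own variance in place of $\sigma^2$. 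Applying Theorem \ref{thm:wass1} (the general multilinear CLT) to $T_M^{\circ}(H, G_n)$ — whose variance is $\sigma(H,G_n)^2(1+o(1))$ by the first condition in \eqref{e:4th} — then gives $\operatorname{Wass}(Z_M^{\circ}(H, G_n), N(0,1)) \to 0$ as $n\to\infty$ for each fixed $M$, with the bound tending to $0$ as $M\to\infty$. Finally, the first condition in \eqref{e:4th} together with $\Ex[(T(H,G_n)-T_M^{\circ}(H,G_n))^2] = \sigma(H,G_n)^2 - \vr[T_M^{\circ}(H,G_n)] - (\text{bias})^2 + \ldots$ controls $\Ex[(Z(H,G_n)-Z_M^{\circ}(H,G_n))^2]$; a standard diagonal-sequence / triangle-inequality argument in Wasserstein distance (choosing $M=M_n\to\infty$ slowly) concludes that $Z(H,G_n)\stackrel{D}\to N(0,1)$.

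\medskip

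\noindent\textbf{The "only if" direction.} Assume $Z(H, G_n)\stackrel{D}\to N(0,1)$. I would show that (i) the truncation removes asymptotically negligible mass, so $\Ex[Z_M^{\circ}(H,G_n)^2]\to 1$ as $n\to\infty$ then $M\to\infty$, and (ii) the sequence $\{Z_M^{\circ}(H,G_n)^4\}$ is uniformly integrable, so that convergence in distribution of $Z_M^{\circ}$ (inherited from that of $Z$ via the negligibility of the correction) upgrades to convergence of the fourth moment. For (i): if the truncated second moment stayed bounded away from $1$ along a subsequence, the "high local count" tuples would carry a non-vanishing fraction of the variance; by the FKG argument already used for Theorem \ref{thm:consistency}, with positive probability none of these hub-tuples is observed, and conditionally on this event $Z(H,G_n)$ is deterministically bounded away from its typical scale — contradicting asymptotic normality (which, with convergence of second moments, would force tightness of the right form). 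For (ii): the truncated statistic is a multilinear form all of whose local counts are bounded by $\sqrt{M}$ times the relevant power of $p_n$ and $\sigma(H,G_n)$; this hypercontractivity-type bound (the same estimate used to derive $\Ex[W_n]/\sigma^4\le\Ex[Z^4]-3$ under $p_n\le\frac1{20}$, run in reverse) gives a uniform bound on $\Ex[Z_M^{\circ}(H,G_n)^4]$ for each fixed $M$, and more precisely shows the family is uniformly integrable. Then $Z_M^{\circ}\stackrel{D}\to N(0,1)$ plus uniform integrability of fourth powers gives $\Ex[Z_M^{\circ}(H,G_n)^4]\to 3$.

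\medskip

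\noindent\textbf{The main obstacle.} The delicate point is part (i) of the "only if" direction: quantifying that a non-negligible contribution of high-local-count tuples to the variance is genuinely incompatible with $Z(H,G_n)\stackrel{D}\to N(0,1)$. One must decompose $T(H,G_n)$ into the part supported on $\bigcap_A\mathcal{C}_M(A)^c$ and a remainder, argue the remainder is either negligible or has a non-Gaussian (e.g. degenerate, or discrete atom-carrying) limit along a subsequence via the FKG/conditioning argument, and rule out a miraculous cancellation in which a non-Gaussian main term and a non-Gaussian remainder sum to a Gaussian. Making this dichotomy rigorous — in particular controlling the joint law of the truncated and residual pieces, which are not independent — is where the real work lies; the moment computations in the "if" direction, while lengthy, are essentially a bookkeeping exercise built on Theorem \ref{thm:wass1} and the expansions already developed for Theorem \ref{thm:wass}.
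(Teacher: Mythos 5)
Your ``if'' direction is essentially the paper's argument: $T_M^{\circ}(H,G_n)$ is itself a multilinear form with coefficients $\alpha(\s)=M_H(\s)\ind\{\mathcal C_M(\s)\}/|\ah|$, so Theorem \ref{thm:wass1} applies directly and the bound $\operatorname{Wass}\lesssim\sqrt{\Ex[\cdot^4]-3}$ makes your separate ``$W_n^{\circ}(M)$'' computation unnecessary; the passage from $Z_M^{\circ}$ back to $Z$ is handled in the paper not by an $L^2$/diagonal argument but by the one-sided Kolmogorov comparison $\Pr(U_{n,M}\le t)\le\Pr(Z\le t)\le\Pr(U_{n,M}\le t)+C/\sqrt M$, which uses the monotonicity $T_M^{\circ}\le T$ together with Lemma \ref{properties}. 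Either route closes that direction.

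The ``only if'' direction is where your proposal has a genuine gap, and you have in effect conceded it: the FKG-plus-conditioning dichotomy you sketch for showing $\Ex[Z_M^{\circ}(H,G_n)^2]\to1$ is exactly what you flag as ``where the real work lies,'' and it is not the mechanism that actually works. The paper never argues the second-moment condition separately via FKG. Instead it exploits the pointwise inequality $T_M^{\circ}(H,G_n)\le T(H,G_n)$ twice: first to get $\sup_t|\Pr(Z\le t)-\Pr(U_{n,M}\le t)|\le C/\sqrt M$, so that $Z\stackrel{D}{\to}N(0,1)$ transfers to $U_{n,M}$ in the double limit; and second to get that any subsequential limit $U(M)$ of $U_{n,M}$ is stochastically dominated by $N(0,1)$, which is the input to a contradiction argument showing $\limsup_M\limsup_n\Ex[U_{n,M}^6]<\infty$. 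That uniform-in-$M$ sixth-moment bound is precisely what your step (ii) is missing: Lemma \ref{moment_r} (your ``hypercontractivity-type bound'') gives moment bounds that depend on $M$, and for a fixed $M$ the limit of $Z_M^{\circ}$ is only within $O(1/\sqrt M)$ of $N(0,1)$ in Kolmogorov distance, which says nothing about its fourth moment unless you also have integrability uniform over $M$. Without that uniformity, the double limsups in \eqref{e:4th} need not vanish. So both halves of your necessity argument — (i) as sketched via FKG, and (ii) as sketched via fixed-$M$ uniform integrability — require the monotonicity/stochastic-domination device that your proposal does not supply.
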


This result shows that the asymptotic normality of $Z(H, G_n)$ is characterized by a truncated fourth-moment phenomenon, more precisely, the convergence of the second and fourth-moments of $Z_M^{\circ}(H, G_n)$ to 1 and 3, respectively. Note that the second moment condition in \eqref{e:4th} ensures that $\vr[T_M^{\circ}(H, G_{n})]=(1+o(1)) \vr[T(H, G_{n})]$. Hence, the fourth-moment condition in \eqref{e:4th} and the Theorem \ref{thm:wass} implies that 
$$\frac{T_M^{\circ}(H, G_n)-\Ex[T_M^{\circ}(H, G_n)]}{\sqrt{\vr[T_M^{\circ}(H, G_n)]}} \stackrel{D} \rightarrow N(0, 1).$$  Therefore, to establish the sufficiency of the conditions in \eqref{e:4th}, it suffices to show that the difference between $T(H, G_{n})$ and $T_M^\circ(H, G_{n})$ scaled by $\vr[T(H, G_n)]$ is small, which follows from the properties of the truncation event \eqref{eq:C_truncation} (see Lemma \ref{properties}). To prove that \eqref{e:4th} is also necessary for the asymptotic normality of $Z(H, G_n)$, we show all moments of $Z_M^{\circ}(H, G_n)$ are bounded (Lemma \ref{moment_r}), which combined with the fact that  $T(H, G_{n}) - T_M^\circ(H, G_{n}) \stackrel{P} \rightarrow 0$ and uniform integrability, implies the desired result (see Appendix \ref{sec:pf_normality} for details).

\subsection{Thresholds for Consistency and Normality}
\label{sec:graphs}

In this section, we apply the results above to derive the thresholds for consistency and asymptotic normality of the HT estimator in various natural graph ensembles. Throughout this section we will assume that $p_n \in (0, \frac{1}{20}]$.

\subsubsection{Bounded Degree Graphs} 

We begin with graphs which have bounded maximum degree. Towards this, denote by $d_v$ the degree of the vertex $v$ in $G_n=(V(G_n), E(G_n))$, and let $\Delta(G_n)=\max_{v \in V(G_n)} d_v$ be the maximum degree of the graph $G_n$. 

\begin{proposition}[Bounded degree graphs] \label{bdd-deg} Suppose $\{G_n\}_{n \geq 1}$ is a sequence of graphs with bounded maximum degree, that is, $\Delta:=\sup_{n \geq 1}\Delta(G_n)=O(1)$. Then for any connected graph $H$ the following hold: 
\begin{itemize}
\item[(a)] If $p_n^{|V(H)|} N(H, G_n) \gg 1$, then the HT estimator $\hat N(H, G_n)$ is consistent for $N(H, G_n)$, and the rescaled statistic $Z(H,G_n) \stackrel{D} \rightarrow N(0, 1)$. Moreover, 
$$\mathrm{Wass}(Z(H, G_n), N(0, 1)) \lesssim_{\Delta,H} \sqrt{\frac{1}{p_n^{|V(H)|}N(H,G_n)}}.$$ 
 
\item[(b)] If $p_n^{|V(H)|} N(H, G_n) =O(1)$, then the HT estimator $\hat N(H, G_n)$ is not consistent for $N(H, G_n)$ and the rescaled statistic $Z(H,G_n)$ is not asymptotically normal.

\end{itemize} 
\end{proposition}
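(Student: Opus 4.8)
The plan is to observe that in the bounded-degree regime everything is controlled by the single quantity $p_n^{|V(H)|}N(H,G_n)=\Ex[T(H,G_n)]$, the expected number of copies of $H$ seen in the sample. First I would record two facts about the local counts that follow from $\Delta(G_n)\le\Delta=O(1)$: (i) the number of copies of $H$ through a fixed vertex is bounded by a constant $D=D_{\Delta,H}$, so $t_H(A)\le D$ for every nonempty $A$, and hence by \eqref{th-iden}, $\sum_A t_H(A)^2\le D\sum_A t_H(A)=D(2^{|V(H)|}-1)N(H,G_n)$; and (ii) for $|A|=|V(H)|$ the quantity $t_H(A)$ is a nonnegative integer, since the valid labelings of $A$ come in $\ah$-orbits of size $|\ah|$, so $t_H(A)^2\ge t_H(A)$ and thus $\sum_{|A|=|V(H)|}t_H(A)^2\ge\sum_{|A|=|V(H)|}t_H(A)=N(H,G_n)$ by the $K=|V(H)|$ term of \eqref{th-iden}. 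Plugging (i) and (ii) into the variance formula of Lemma \ref{compare} and using $p_n^{|A|}\ge p_n^{|V(H)|}$, I would conclude $\vr[\hat N(H,G_n)]=\Theta_{\Delta,H}(N(H,G_n)/p_n^{|V(H)|})$, equivalently $\sigma(H,G_n)^2=\Theta_{\Delta,H}(p_n^{|V(H)|}N(H,G_n))$.

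For part (a), consistency is then immediate: the variance condition \eqref{eq:consistent_2moment} reduces to $1/(p_n^{|V(H)|}N(H,G_n))\to0$, which holds by hypothesis (alternatively, apply Chebyshev's inequality). For the CLT I would invoke Theorem \ref{thm:wass} after bounding $\Ex[W_n]$. Connectedness enters only through counting: fixing $\s_1$ (one of $|\ah|N(H,G_n)$ copies) and adjoining $\s_2,\s_3,\s_4$ one at a time along a spanning tree of their intersection graph, each new tuple uses one of $O(|V(H)|)$ already-occupied vertices, through each of which there are at most $D$ copies, so $|\mathcal{K}_{n,4}|\lesssim_{\Delta,H}N(H,G_n)$. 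For the per-term contribution, using $M_H\le1$ and $|X_\s-p_n^{|V(H)|}|\le1$,
\[
\Ex|Y_{\s_1}Y_{\s_2}Y_{\s_3}Y_{\s_4}|\le\frac{1}{|\ah|^4}\Ex|X_{\s_1}-p_n^{|V(H)|}|\le\frac{2p_n^{|V(H)|}}{|\ah|^4},
\]
so $\Ex[W_n]\lesssim_{\Delta,H}p_n^{|V(H)|}N(H,G_n)$. Theorem \ref{thm:wass} then gives
\[
\mathrm{Wass}(Z(H,G_n),N(0,1))\lesssim_H\sqrt{\frac{\Ex[W_n]}{\sigma(H,G_n)^4}}\lesssim_{\Delta,H}\sqrt{\frac{p_n^{|V(H)|}N(H,G_n)}{(p_n^{|V(H)|}N(H,G_n))^2}}=\sqrt{\frac{1}{p_n^{|V(H)|}N(H,G_n)}},
\]
which is $o(1)$, yielding both the stated rate and $Z(H,G_n)\stackrel{D}{\rightarrow}N(0,1)$.

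For part (b), suppose $p_n^{|V(H)|}N(H,G_n)\le C_0$ for all large $n$. Inconsistency follows from Theorem \ref{thm:consistency}: fix a constant $\e<1/C_0$; then $\e p_n^{|V(H)|}N(H,G_n)<1$, so for $|A|=|V(H)|$ the integrality in (ii) forces $\ind\{t_H(A)>\e p_n^{|V(H)|}N(H,G_n)\}=\ind\{t_H(A)\ge1\}$, whence the $s=|V(H)|$ instance of \eqref{e:cond2_sum} equals $N(H,G_n)^{-1}\sum_{|A|=|V(H)|}t_H(A)=1\not\to0$; thus \eqref{e:cond2} fails and $\hat N(H,G_n)$ is not consistent. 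For the failure of normality, the degree-free lower bound $\sigma(H,G_n)^2\gtrsim_H p_n^{|V(H)|}N(H,G_n)$ from (ii) gives $\Ex[T(H,G_n)]/\sigma(H,G_n)\lesssim_H\sqrt{p_n^{|V(H)|}N(H,G_n)}\le\sqrt{C_0}$; since $T(H,G_n)\ge0$ this forces $Z(H,G_n)=(T(H,G_n)-\Ex[T(H,G_n)])/\sigma(H,G_n)\ge-B$ for a finite $B$ and all $n$. A sequence of random variables uniformly bounded below cannot converge in distribution to $N(0,1)$, which has full support, so $Z(H,G_n)$ is not asymptotically normal.

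I expect the main obstacle to be the two-sided estimate $\sigma(H,G_n)^2=\Theta_{\Delta,H}(p_n^{|V(H)|}N(H,G_n))$: the upper bound rests on the uniform boundedness $t_H(A)\le D$ of the local counts, while the matching lower bound needs the integrality of $t_H(A)$ on $|V(H)|$-element sets, and it is this identification of $\sigma^2$ with $\Ex[T]$ up to constants that makes the Wasserstein rate in (a) and the uniform lower bound on $Z(H,G_n)$ in (b) fall out cleanly; getting $\Ex[W_n]$ down to the correct power of $p_n$ (so as to recover the precise rate, not merely $o(1)$) is the only other point needing a little care.
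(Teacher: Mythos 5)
Your proposal is correct. Part (a) follows essentially the same route as the paper: the lower bound $\sigma(H,G_n)^2\gtrsim_H p_n^{|V(H)|}N(H,G_n)$ via $t_H(A)^2\ge t_H(A)$ and Lemma \ref{compare}, and the bound $\Ex[W_n]\lesssim_{\Delta,H}p_n^{|V(H)|}N(H,G_n)$ by counting connected $4$-tuples of copies of $H$ using the degree bound; your per-term estimate $\Ex|Y_{\s_1}\cdots Y_{\s_4}|\lesssim p_n^{|V(H)|}$ is a slightly more elementary substitute for the paper's Lemma \ref{abs-bound}, and your matching upper bound on $\sigma^2$ (via $t_H(A)\le D$) is extra information the paper does not need. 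Part (b) is where you genuinely diverge: the paper invokes Lemma \ref{converse-tool}, i.e.\ the FKG bound $\Pr(T(H,G_n)=0)\ge e^{-cp_n^{|V(H)|}N(H,G_n)}$, which stays bounded away from $0$ under the hypothesis and simultaneously rules out consistency and normality via the atom at $0$. You instead get inconsistency from the necessity direction of Theorem \ref{thm:consistency} (using integrality of $t_H(A)$ on $|V(H)|$-sets to make the indicator in \eqref{e:cond2_sum} identically one for small $\e$), and non-normality from the deterministic lower bound $Z(H,G_n)\ge -\Ex[T(H,G_n)]/\sigma(H,G_n)\ge -B$, which is incompatible with a full-support limit. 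Both arguments are valid; the paper's is shorter and self-contained at the level of Lemma \ref{converse-tool}, while yours avoids FKG entirely in the non-normality step and makes explicit which of the two consistency conditions fails. One cosmetic point: your "$|\mathcal{K}_{n,4}|\lesssim N(H,G_n)$" should be read as the number of connected $4$-tuples with $M_H(\s_i)=1$ for all $i$ (the other tuples contribute zero to $W_n$), since the full $\mathcal{K}_{n,4}$ is much larger.
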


Recall that $\Ex[T(H, G_n)] = p_n^{|V(H)|} N(H, G_n)$. Therefore, in other words, the result above shows that the HT estimator is consistent and asymptotic normal in bounded degree graphs whenever the expected number of copies of $H$ in the sampled graph diverges, whereas it is inconsistent whenever the expected number copies remains bounded. The proof of Proposition \ref{bdd-deg} is given in Appendix \ref{sec:degree_pf}. For (a), using Proposition \ref{ppn:variance_estimation} , it is suffices to bound $\frac{1}{\sigma(H, G_n)^4} \Ex[W_n]$. This involves, recalling the definition of $W_n$ from \eqref{W_n:defn}, bounding the number of copies of various subgraphs in $G_n$ obtained by the union of 4 isomorphic copies $H$, which in this case can be estimated using the maximum degree bound on $G_n$. For (b), we show that whenever $\Ex[T(H, G_n)] = p_n^{|V(H)|} N(H, G_n) = O(1)$, there is a positive chance that $T(H, G_n)$ is zero, which immediately rule out consistency and normality.

\subsubsection{Erd\H{o}s-R\'enyi Random Graphs} 

We now derive the thresholds for consistency and asymptotic normality in various random graph models. We begin with the Erd\H{o}s-R\'enyi model $G_n \sim \mathcal G(n, q_n)$, which is a random graph on $n$ vertices where each edge is present independently with probability $q_n \in (0, 1)$. Here the location of the phase transition
is related to the notion of balancedness of a graph. 

\begin{definition}\cite[Chapter 3]{janson2011book} \label{defn:m} For a fixed connected graph $H$, define 
\begin{align*}
m(H)=\max_{H_1\subseteq H}\frac{|E(H_1)|}{|V(H_1)|},
\end{align*}
where the maximum is over all non-empty subgraphs $H_1$ of $H$. The graph $H$ is said to be {\it balanced}, if $m(H)=\frac{|E(H)|}{|V(H)|}$, and {\it unbalanced} otherwise. 
\end{definition}

\begin{theorem}[Erd\H{o}s-R\'enyi graphs] \label{thm:er-consistency}
 Let $G_n \sim \mathcal G(n, q_n)$ be an Erd\H{o}s-R\'enyi random graph with edge probability $q_n\in (0,1)$. Then for any connected graph H the following hold: 
\begin{enumerate}
\item[(a)]
If $np_nq_n^{m(H)}\gg 1$, then the HT estimator $\hat N(H, G_n)$ is consistent for $N(H, G_n)$, and the rescaled statistic $Z(H,G_n) \stackrel{D} \rightarrow N(0, 1)$. Moreover,  
\begin{align*}
\operatorname{Wass}(Z(H,G_n),N(0,1))=O_P\left((np_nq_n^{m(H)})^{-\frac12}\right).
\end{align*}

\item[(b)]
If $np_nq_n^{m(H)}=O(1)$, then $\hat N(H,G_n)$ is not consistent for $N(H,G_n)$, and $Z(H,G_n)$ is not asymptotically normal. 
\end{enumerate}
\end{theorem}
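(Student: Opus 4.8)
The plan is to deduce both parts from the general results already established, namely Theorem \ref{thm:consistency} (consistency), Theorem \ref{thm:wass}/Proposition \ref{ppn:variance_estimation} (asymptotic normality via the fourth-moment bound), and Proposition \ref{bdd-deg}(b) (the obstruction when the expected sampled count is bounded). The key point is that for $G_n \sim \mathcal{G}(n,q_n)$, all the relevant quantities --- $N(H,G_n)$, the local counts $t_H(A)$, the variance $\sigma(H,G_n)^2$, and the subgraph counts appearing in $\Ex[W_n]$ --- concentrate around their expectations, which are explicit powers of $n$ and $q_n$. First I would record the first-moment estimates: $\Ex[N(H,G_n)] = \Theta_H(n^{|V(H)|} q_n^{|E(H)|})$, and more importantly, for a set $A$ with $|A| = s$, $\Ex[t_H(A)] = \Theta_H(n^{|V(H)|-s} q_n^{|E(H)|})$ when $A$ spans no edges, but the \emph{dominant} contribution to the truncation/variance analysis comes from copies of $H$ that share a dense subgraph with $A$; this is exactly where $m(H)$ enters, since the worst-case ratio $|E(H_1)|/|V(H_1)|$ controls which overlap pattern is most likely to be large. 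The quantity $np_n q_n^{m(H)}$ is, up to constants, the expected number of copies of the densest subgraph $H_1$ of $H$ passing through a fixed vertex in the sampled graph, and it is the threshold governing whether such local counts are negligible.

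For part (a), assuming $np_n q_n^{m(H)} \gg 1$, I would verify the hypothesis $\Ex[W_n] = o(\sigma(H,G_n)^4)$ of Proposition \ref{ppn:variance_estimation}, which simultaneously yields consistency and the CLT. Following the strategy outlined after Proposition \ref{bdd-deg}, $\Ex[W_n]$ decomposes as a sum over connected configurations of four copies of $H$; each term is $\Theta$ of a power of $n$ and $q_n$ (in expectation over $G_n$), and one checks that the ratio $\Ex[W_n]/\sigma(H,G_n)^4$ is, up to constants, a negative power of $np_n q_n^{m(H)}$ --- this is precisely the content of the claimed Wasserstein rate $O_P((np_n q_n^{m(H)})^{-1/2})$. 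The "$O_P$" (rather than deterministic) bound reflects that these are high-probability estimates over the randomness of $G_n$, so I would invoke standard concentration for subgraph counts in $\mathcal{G}(n,q_n)$ (e.g. from \cite{janson2011book}) to replace the random quantities by their expectations up to $1+o_P(1)$ factors, then feed the resulting deterministic bound into Theorem \ref{thm:wass}.

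For part (b), assuming $np_n q_n^{m(H)} = O(1)$, the goal is to show $\hat N(H,G_n)$ is inconsistent and $Z(H,G_n)$ is not asymptotically normal. Let $H_1 \subseteq H$ be the subgraph attaining $m(H)$. The bound $np_n q_n^{m(H)} = O(1)$ forces, with probability bounded away from $0$ over the joint randomness, the sampled graph to contain no copy of $H_1$ at all (the expected number of sampled copies of $H_1$ is $\Theta(n^{|V(H_1)|} q_n^{|E(H_1)|} p_n^{|V(H_1)|}) = \Theta((np_n q_n^{m(H_1)})^{|V(H_1)|}) = O(1)$, and a second-moment / Poisson-approximation argument gives a constant lower bound on the probability of zero copies), and hence no copy of $H$ either, making $T(H,G_n) = 0$ with non-vanishing probability. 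Since $N(H,G_n) = \Theta_P(n^{|V(H)|} q_n^{|E(H)|}) > 0$ w.h.p., the event $\{T(H,G_n) = 0\}$ of non-negligible probability rules out both $\hat N(H,G_n)/N(H,G_n) \to 1$ and the normality of $Z(H,G_n)$, exactly as in the proof of Proposition \ref{bdd-deg}(b). The main obstacle is the careful bookkeeping in part (a): identifying, among all connected unions of four copies of $H$ overlapping a pattern, which configuration dominates $\Ex[W_n]$ and confirming that the exponent of $np_n q_n^{m(H)}$ that emerges is strictly negative --- this requires an optimization over subgraph overlap structures that is combinatorially delicate, and is the analogue (in this random-graph setting) of the general estimates behind Theorem \ref{thm:wass}.
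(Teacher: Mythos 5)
Your overall architecture is the right one and matches the paper's: part (a) reduces to showing $\Ex[W_n\mid G_n]/\sigma(H,G_n)^4=O_P\bigl((np_nq_n^{m(H)})^{-1}\bigr)$ and feeding this into Theorem \ref{thm:wass} and Proposition \ref{ppn:variance_estimation}, while part (b) follows from the observation that absence of the densest subgraph $H_1\subseteq H$ (with $m(H)=|E(H_1)|/|V(H_1)|$) in the sampled graph forces $T(H,G_n)=0$ with probability bounded away from zero; for (b) the paper simply applies Lemma \ref{converse-tool} conditionally on $G_n$ together with the first-moment bound $p_n^{|V(H_1)|}\Ex[N(H_1,G_n)]\le (np_nq_n^{m(H)})^{|V(H_1)|}=O(1)$, so your Poisson-approximation detour is unnecessary but harmless. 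Part (b) of your proposal is therefore essentially complete.

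The genuine gap is in part (a): the entire content of the theorem lives in the combinatorial estimate you defer as ``the main obstacle,'' and without it nothing is proved. Concretely, setting $\mathcal N_n(r):=\sum_{H_1,\ldots,H_r\in\mathcal H_n}p_n^{|V(\bigcup_a H_a)|}q_n^{|E(\bigcup_a H_a)|}\ind\{\bigcup_a H_a\text{ connected}\}$, one needs $\Ex[W_n]\lesssim\mathcal N_n(4)$, $\Ex[\sigma(H,G_n)^2]\gtrsim\mathcal N_n(2)$, and crucially $\mathcal N_n(4)\lesssim \mathcal N_n(2)^2(np_nq_n^{m(H)})^{-1}$. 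The paper does \emph{not} identify a dominant overlap configuration (the optimization you flag as delicate); instead it proves the recursion $\mathcal N_n(r)\lesssim \mathcal N_n(r-1)\sqrt{\mathcal N_n(2)}\,(np_nq_n^{m(H)})^{-1/2}$ by ordering the copies so every partial union is connected, bounding the cost of attaching the $r$-th copy along an overlap $F\subseteq H$ by $\max_F (np_n)^{|V(H)|-|V(F)|}q_n^{|E(H)|-|E(F)|}$, and dividing by the lower bound $\sqrt{\mathcal N_n(2)}\gtrsim\max_F(np_n)^{|V(H)|-|V(F)|/2}q_n^{|E(H)|-|E(F)|/2}$; the ratio is $\max_F(np_nq_n^{|E(F)|/|V(F)|})^{-|V(F)|/2}\le(np_nq_n^{m(H)})^{-1/2}$, which is exactly where $m(H)$ enters. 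A second point you gloss over: replacing $\sigma(H,G_n)^2$ by its expectation is not ``standard concentration for subgraph counts,'' because $\sigma(H,G_n)^2$ is a weighted sum over overlapping pairs of copies of $H$; its variance must itself be bounded by $\mathcal N_n(4)$ (via Lemma \ref{free-bound}), so the same recursive lemma is needed a second time to get $\vr[\sigma(H,G_n)^2]=o\bigl((\Ex[\sigma(H,G_n)^2])^2\bigr)$. Until these two estimates are supplied, part (a) is an outline rather than a proof.
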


The proof of this result is given in Appendix \ref{sec:er_pf}. Here, to estimate $W_n$, we first take  expectation over the randomness of the graph, and then use an inductive counting argument (Lemma \ref{lm:graph_B}) combined with a second moment calculation, to obtain the desired bound.

\begin{figure*}[h]
\centering
\begin{minipage}[l]{1.0\textwidth}
\centering
\vspace{-0.15in}
\includegraphics[width=3.85in]
    {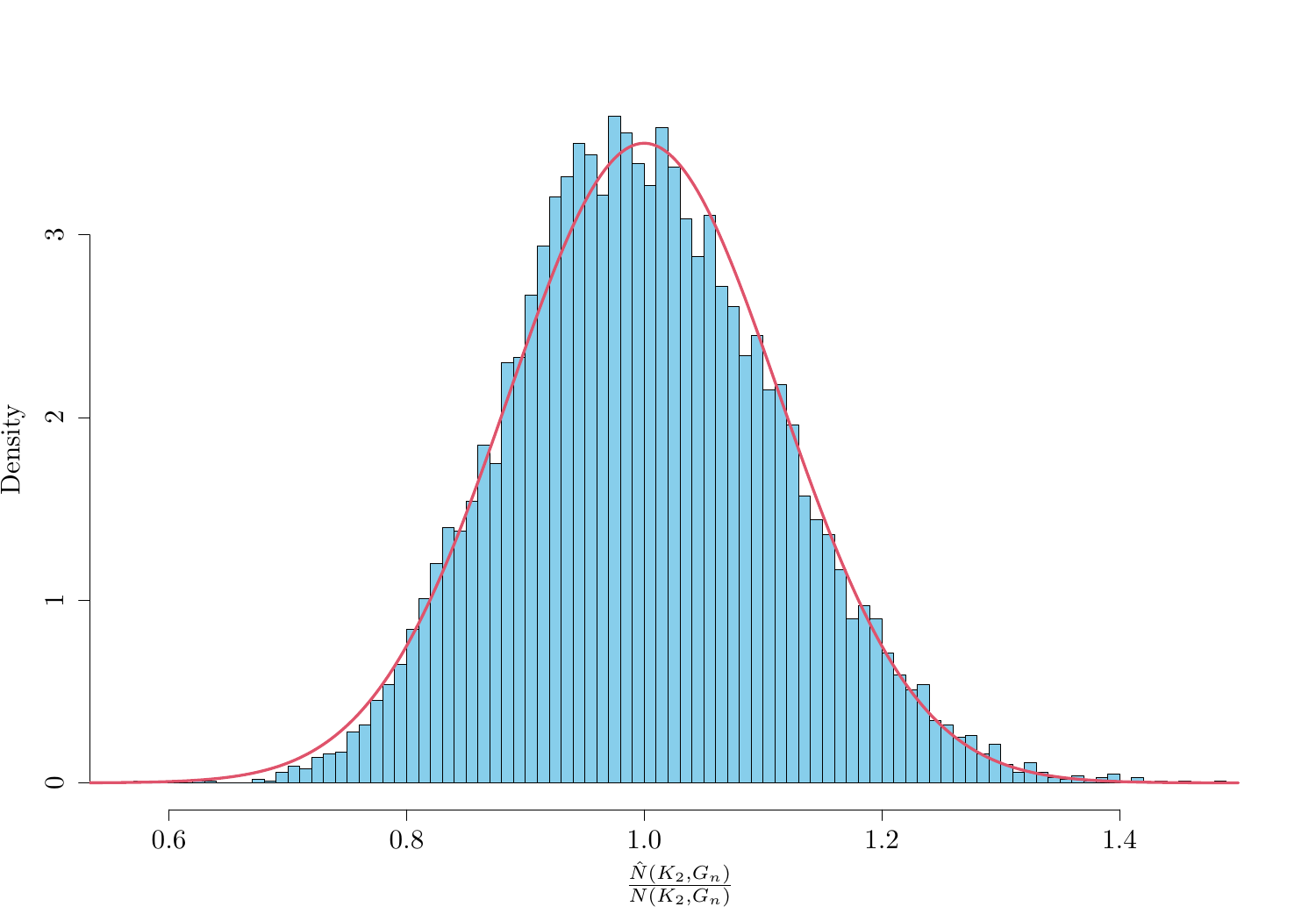}\\
\end{minipage} 
\caption{\small{Histogram of $\hat N(K_2, G_n)/N(K_2, G_n)$ in the Erd\H os-R\'enyi random graph $G_n \sim \mathcal G(10000, 0.5)$ with sampling ratio $p_n=0.03$ over 10000 replications, and the limiting normal density (plotted in red). }}
\label{fig:edgehistogram}
\end{figure*}

\begin{remark}
To interpret the threshold in Theorem \ref{thm:er-consistency}, recall that $n q_n^{m(H)}$ is the threshold for the occurrence of $H$ in the random graph $\mathcal{G}(n, q_n)$ \cite[Theorem 3.4]{janson2011book}. More precisely, whenever $n q_n^{m(H)} = O(1)$ the number of copies of $H$ in $\mathcal{G}(n, q_n)$  is $O_P(1)$, whereas if $n q_n^{m(H)} \gg 1$, the number of copies of $H$ in $G_n$ diverges. In this case, conditional on the set of sampled vertices $S$,  the observed graph behaves like the Erd\H{o}s-R\'enyi model $\mathcal G(|S|, q_n)$. As a result, since $S \sim \mathrm{Bin}(n, p_n)$, the observed graph (unconditionally) looks roughly like the model $\mathcal{G}(n p_n, q_n)$. Therefore, Theorem \ref{thm:er-consistency} essentially shows that the HT estimator is consistent and asymptotically normal whenever the number of copies of $H$ in sampled graph diverges (which happens if $n p_n q_n^{m(H)} \rightarrow \infty$), whereas it is inconsistent whenever the number of copies of $H$ is bounded in probability.  The histogram in Figure \ref{fig:edgehistogram} illustrates the asymptotic normality of the HT estimator for the number of edges ($H=K_2$). Here, we fix a realization of the Erd\H os-R\'enyi random graph $G_n \sim \mathcal G(n, q_n)$, with $n=10000$ and $q_n=\frac{1}{2}$, choose the sampling ratio $p_n=0.03$, and plot the histogram of $\hat N(K_2, G_n)/N(K_2, G_n)$ over 10000 replications. Note that, as expected, the histogram is centered around 1, with the red curve showing the limiting normal density. 
\end{remark}

Note that Theorem \ref{thm:er-consistency} above gives a CLT for $\hat N(H, G_n)$ centered around $N(H, G_n)$, when $np_nq_n^{m(H)}\gg 1$. However, since $G_n$ is a random graph $N(H, G_n)$ is itself random, and it is natural to wonder whether one can obtain 
a CLT for $\hat N(H, G_n)$ centered around $\Ex[N(H, G_n)]$, where the expectation is taken with respect to the randomness of $G_n$. This question is not just specific to the Erd\H os-R\'enyi model, it arises whenever $G_n$ is generated from any underlying stochastic model. To address this issue suppose $\{G_n\}_{n \geq 1}$ is a sequence of random graphs (from some generative model) and define
\begin{align}\label{eq:def1}
	\mathcal A(H, G_n):=\frac{\hat{N}(H,G_n)-\Ex [N(H,G_n)]}{\sqrt{\mathrm{Var}[\hat{N}(H,G_n)]}} , 
\end{align}
where the expectation and the variance above are taken over both the randomness of the sampling scheme and the graph $G_n$. Note that 
\begin{align}\label{eq:ZEH}
\mathcal A(H, G_n) =  \sqrt{\frac{\mathrm{Var}_{G_n}[\hat N(H,G_n)]}{\mathrm{Var}[\hat{N}(H,G_n)]}} \cdot Z(H, G_n) + \sqrt{\frac{\mathrm{Var}[N(H,G_n)]}{\mathrm{Var}[\hat{N}(H,G_n)]}} \cdot \mathcal E(H, G_n), 
\end{align} 
where
\begin{align}\label{eq:ZE}
Z(H,G_n):=\frac{\hat{N}(H,G_n)- N(H,G_n)}{\sqrt{\mathrm{Var}_{G_n}[\hat{N}(H,G_n)]}}  \quad \text{and} \quad \mathcal E(H, G_n):=\frac{{N}(H,G_n)-\Ex [N(H,G_n)]}{\sqrt{\mathrm{Var}[{N}(H,G_n)]}} , 
\end{align}
with $\Ex_{G_n}$ and $\mathrm{Var}_{G_n}$ denoting the conditional expectation and conditional variance taken conditionally on the random graph $G_n$. Recall that Theorem \ref{thm:wass} deals with the CLT of $Z(H,G_n)$ conditional on the graph $G_n$ (often known as a {\it quenched} CLT in the language of statistical physics). Given this result, to obtain a CLT for $\mathcal A(H,G_n)$  (that is, an {\it annealed} CLT in statistical physics terminology), we would need to show a CLT for $\mathcal E(H, G_n)$ and establish that the conditional variance $\vr_{G_n}[\hat{N}(H,G_n)]$ is consistent for its expectation (see Lemma \ref{lem:aez} for the formal statement). In particular, for the  Erd\H{o}s-R\'enyi (ER) model $G(n, q_n)$  both these results can be easily established and we have the following result. 

\begin{corollary}[Erd\H{o}s-R\'enyi graphs (annealed version)] \label{cor:ann-er-consistency}
	Let $G_n \sim \mathcal G(n, q_n)$ be an Erd\H{o}s-R\'enyi random graph with edge probability $q_n\in (0,1)$. Then for any connected graph H the following hold: 
	\begin{enumerate}
		\item[(a)]
		If $np_nq_n^{m(H)}\gg 1$, then the HT estimator $\hat N(H, G_n)$ is consistent for $\Ex[N(H, G_n)]$ and $\mathcal A(H, G_n) \stackrel{D} \rightarrow N(0, 1)$. 
		\item[(b)]
		If $np_nq_n^{m(H)}=O(1)$, then $\hat N(H,G_n)$ is not consistent for $\Ex[N(H,G_n)]$, and $\mathcal A(H, G_n)$ is not asymptotically normal. 
	\end{enumerate}
\end{corollary}

The proof of Corollary \ref{cor:ann-er-consistency} is  given in Appendix \ref{sec:annealed}. This is a consequence of a more general result (see Lemma \ref{lem:aez}) about the CLT of $\mathcal A(H, G_n)$ (when $G_n$ is generated according to some stochastic model). In particular, in Lemma \ref{lem:aez} we show that $\mathcal A(H, G_n) \stackrel{D} \rightarrow N(0, 1)$ whenever the following conditions hold: (a) conditional on the graph sequence $\{G_n\}_{n \geq 1}$, $Z(H,G_n) \stackrel{D} \rightarrow N(0,1)$, (b) $\mathcal E(H, G_n) \stackrel{D} \rightarrow N(0,1)$, and (c) $\vr_{G_n}[\hat{N}(H,G_n)]$ is consistent for its expected value $\Ex[\vr_{G_n}[\hat{N}(H,G_n)]]$. These conditions can be easily verified for the Erd\H{o}s-R\'enyi model $\mathcal G(n, q_n)$ whenever $np_nq_n^{m(H)}\gg 1$, which establishes the result in Corollary \ref{cor:ann-er-consistency} (1). 
\begin{remark} The normality condition (assumption (b)) on $\mathcal E(H, G_n)$ in Lemma \ref{lem:aez} can be removed if instead of assumption (c) the following stronger condition holds: 
\begin{align}\label{eq:varNH}
\frac{\mathrm{Var}[\hat{N}(H,G_n)]}{\mathbb E [\mathrm{Var}[\hat{N}(H,G_n)|G_n]]}\stackrel{P}{\to} 1. 
\end{align}  
This is because \eqref{eq:varNH} implies $\mathrm{Var}[N(H,G_n)] \ll \mathrm{Var}[\hat N(H,G_n)]$, hence, recalling \eqref{eq:ZEH}, the CLT of $\mathcal A(H, G_n)$ follows from the conditional CLT of $Z(H,G_n)$, since $\mathcal E(H, G_n)$ is bounded in probability. In the Erd\H os-R\'enyi model, there is a regime of the parameters $p_n, q_n$ where \eqref{eq:ZEH} holds. There is also a regime where $\mathrm{Var}[N(H,G_n)]$ and $\mathrm{Var}[\hat N(H,G_n)]$ are of the same order (that is, \eqref{eq:varNH} does not hold), where one needs to invoke Lemma \ref{lem:aez} to establish the CLT of $\mathcal A(H, G_n)$. (Recall that unlike \eqref{eq:varNH}, assumption (c) in Lemma \ref{lem:aez} holds in the full range of parameters in Erd\H os-R\'enyi model.) Nevertheless, condition \eqref{eq:varNH} broadens the scope of our results and can be useful in other random graph models. 
\end{remark}

\subsubsection{Random Regular Graphs}

As a corollary to Theorem \ref{thm:er-consistency} we can also derive the threshold for random regular graphs. 
To this end, denote by $\mathscr{G}_{n, d}$ the collection of all simple $d$-regular graphs on $n$ vertices, where $1 \leq d \leq n - 1$ is such that $n d$ is even. 

\begin{corollary}[Random regular graphs]\label{regulargraph} Suppose $G_n$ is a uniform random sample from $\mathscr{G}_{n, d}$ and $H = (V(H), E(H))$ is a connected graph with maximum degree $\Delta(H)$.  
\begin{enumerate}

\item[$(a)$] If $d \gg 1$, then setting $q_n=d/n$ the following hold: 
		
\begin{itemize}
\item If $np_nq_n^{m(H)}\gg 1$, then $\hat N(H,G_n)$ is consistent for $N(H,G_n)$, and $Z(H, G_n)$ converges in distribution to $N(0,1)$.

\item If $np_nq_n^{m(H)}=O(1)$, then $\hat N(H,G_n)$ is not consistent for $N(H,G_n)$, and $Z(H,G_n)$ is not asymptotically normal.

\end{itemize}

\item[$(b)$] If $d = O(1)$, then assuming $\Delta(H) \leq d$, the following hold:

\begin{itemize}
\item If $|E(H)| = |V(H)|-1$, then $\hat N(H,G_n)$ is consistent for $N(H,G_n)$ and $Z(H, G_n)$ converges in distribution to $N(0,1)$ if and only if $n p_n^{|V(H)|} \gg 1$.

\item If $|E(H)| \geq |V(H)|$, then $\hat N(H,G_n)$ is not consistent for $N(H,G_n)$, and $Z(H,G_n)$ is not asymptotically normal, irrespective of the value of $p_n$. 

\end{itemize}

\end{enumerate}
\end{corollary}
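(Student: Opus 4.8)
The plan is to reduce part (b) to Proposition \ref{bdd-deg} and part (a) to Theorem \ref{thm:er-consistency}; in each case the only extra work is to estimate the relevant subgraph counts in the uniform random $d$-regular model $\mathscr{G}_{n,d}$.

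\emph{Part (b).} For $d=O(1)$ every $G_n\sim\mathscr{G}_{n,d}$ has bounded maximum degree, so Proposition \ref{bdd-deg} applies and the question reduces entirely to the order of $p_n^{|V(H)|}N(H,G_n)$. I would invoke two classical facts about uniform random $d$-regular graphs with $d$ fixed: for each fixed $\ell$ the number of $\ell$-cycles is $O_P(1)$ (asymptotically Poisson), and for each fixed $r$ all but $o_P(n)$ vertices have a depth-$r$ neighbourhood isomorphic to the $d$-regular tree. If $|E(H)|=|V(H)|-1$, so $H$ is a tree, then, using $\Delta(H)\le d$, each vertex with a tree-like neighbourhood is the image of a fixed vertex of $H$ in at least one embedding, giving at least $(1-o_P(1))n$ distinct embeddings and hence $N(H,G_n)\ge(1-o_P(1))n/|\ah|$; with the deterministic bound $N(H,G_n)\le n\,\Delta(G_n)^{|V(H)|-1}/|\ah|=O(n)$ this gives $N(H,G_n)=\Theta_P(n)$, so $p_n^{|V(H)|}N(H,G_n)=\Theta_P(n p_n^{|V(H)|})$ and Proposition \ref{bdd-deg} yields exactly the dichotomy at $n p_n^{|V(H)|}\gg1$. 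If instead $|E(H)|\ge|V(H)|$, then $H$ contains a cycle $C$; the number of copies of $C$ in $G_n$ is $O_P(1)$ and, $G_n$ having bounded degree, each copy of $H$ extends a fixed sub-cycle isomorphic to $C$ in only $O(1)$ ways, so $N(H,G_n)=O_P(1)$. Hence for every $\e>0$ there is $C_\e<\infty$ with $\Pr(N(H,G_n)\le C_\e)\ge1-\e$ for all large $n$; on $\{N(H,G_n)\le C_\e\}$ we have $p_n^{|V(H)|}N(H,G_n)\le C_\e$, and the mechanism behind Proposition \ref{bdd-deg}(b) (an FKG lower bound showing $\Pr(T(H,G_n)=0\mid G_n)$ is bounded away from $0$ whenever at most $C_\e$ copies of $H$ are present) gives $\liminf_n\Pr(T(H,G_n)=0)>0$, ruling out consistency and asymptotic normality for every $p_n$.

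\emph{Part (a).} For $d=d_n\to\infty$ put $q_n=d/n\in(0,1]$. The proof of Theorem \ref{thm:er-consistency} works by taking expectations over the graph, reducing $\sigma(H,G_n)^2=\vr[T(H,G_n)]$ and $\Ex[W_n]$ (both of which, for fixed $H$, are bounded-degree polynomials in the entries of $A(G_n)$) to counts of the form $\Ex[\prod_{(i,j)\in E(F)}a_{s_is_j}]$ for fixed graphs $F$ built from a few overlapping copies of $H$, and then invoking Theorem \ref{thm:wass} and Theorem \ref{thm:consistency}. I would run the identical argument for $\mathscr{G}_{n,d}$, replacing the Erd\H{o}s--R\'enyi identity $\Ex_{\mathcal G(n,q_n)}[\prod_{e\in E(F)}a_e]=q_n^{|E(F)|}$ by the two-sided estimate $\Ex_{\mathscr{G}_{n,d}}[N(F,G_n)]=\Theta(n^{|V(F)|}q_n^{|E(F)|})$, valid for every fixed $F$ when $d\to\infty$ (equivalently, in the configuration model a fixed set of $k$ edges is present with probability $(1+o(1))(d/n)^k$, for each fixed $k$); this is the analogue of Lemma \ref{lm:graph_B}. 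Plugging these estimates into the formulas for $\sigma(H,G_n)^2$ and $\Ex[W_n]$, using Markov's inequality to pass from $\mathscr{G}_{n,d}$-expectations to high-probability bounds for the upper bounds and a first/second-moment argument for the lower bound on $\vr[T(H,G_n)]$, produces exactly the same estimates as in the Erd\H{o}s--R\'enyi case; hence Proposition \ref{ppn:variance_estimation} and Theorem \ref{thm:wass} deliver consistency, $Z(H,G_n)\stackrel{D}\rightarrow N(0,1)$, and the rate $O_P((n p_n q_n^{m(H)})^{-1/2})$ whenever $n p_n q_n^{m(H)}\gg1$. For the converse, take $H_1\subseteq H$ with $m(H)=|E(H_1)|/|V(H_1)|$; when $n p_n q_n^{m(H)}=O(1)$ the expected number of copies of $H_1$ in the sampled graph is $\Theta((n p_n)^{|V(H_1)|}q_n^{|E(H_1)|})=\Theta((n p_n q_n^{m(H)})^{|V(H_1)|})=O(1)$, so with probability bounded away from $0$ no copy of $H_1$, and hence no copy of $H$, survives the sampling; thus $T(H,G_n)=0$ with probability bounded away from $0$, precluding both consistency and asymptotic normality, exactly as in Theorem \ref{thm:er-consistency}(b).

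\emph{Main obstacle.} Everything above is bookkeeping on top of the already-proven Theorems \ref{thm:consistency} and \ref{thm:wass} and Proposition \ref{bdd-deg}, except for the counting lemma for $\mathscr{G}_{n,d}$ with $d\to\infty$. When $d=O(1)$ the contiguity among the uniform model, the configuration model, and $\mathcal G(n,d/n)$ makes such estimates essentially automatic, but for unboundedly growing $d$ one must control the joint edge probabilities $\Ex[\prod_{e\in E(F)}a_e]$ uniformly over all fixed graphs $F$ that arise as unions of up to four overlapping copies of $H$, and uniformly over the admissible range of $d$ (from slowly diverging to linear in $n$), together with the matching lower bound and the concentration needed to bound $\vr[T(H,G_n)]$ from below. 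Establishing these random-regular-graph estimates is the real content of part (a); once they are in hand, the corollary follows by quoting the general theorems.
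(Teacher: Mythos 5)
Your overall architecture matches the paper's proof exactly: part (b) is reduced to Proposition \ref{bdd-deg} via the two counting facts $N(H,G_n)=\Theta_P(n)$ for trees and $N(H,G_n)=O_P(1)$ when $H$ contains a cycle (the paper's Lemma \ref{lm:regular_NHGn}, proved by induction on leaves and the Poisson limit for short cycle counts), and part (a) reruns the Erd\H{o}s--R\'enyi argument after replacing the identity $\Ex[\prod_{e\in E(F)}a_e]=q_n^{|E(F)|}$ by a $(1+o(1))q_n^{|E(F)|}$ estimate for $\mathscr{G}_{n,d}$. The one ingredient you correctly flag as unproven --- the joint edge-probability estimate for growing $d$ --- is where your sketch and the paper diverge in substance: the configuration-model heuristic you invoke does not transfer to the uniform model across the whole admissible range of $d$ (contiguity/switching arguments break down well before $d=\Theta(n)$). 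The paper fills this by splitting into two regimes: for $1\ll d\ll n$ it quotes Kim--Sudakov--Vu \cite[Lemma 2.1]{KIM20071961}, which gives exactly the $(1+o(1))q_n^{|E(\mathcal G(\s_1,\ldots,\s_4))|}$ asymptotics for the mixed moments (and \cite[Corollary 2.2]{KIM20071961} for the below-threshold direction), while for $d=\Theta(n)$ it uses the almost-sure $O(d^{3/4})$ bound on the second eigenvalue and the pseudo-random subgraph-counting theorem \cite[Theorem 4.10]{krivelevich2006pseudo} to get the two-sided estimate $\Ex[N(F,G_n)]=\Theta(n^{|V(F)|}q_n^{|E(F)|})$. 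With those citations in place of your configuration-model appeal, your argument is the paper's argument; without them, part (a) is incomplete precisely at the point you identify.
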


It is well-known that the typical behavior of the number of small subgraphs in a random $d$-regular graph asymptotically equals to that in a Erd\H{o}s-R\'enyi graph $\mathcal G(n, q_n)$, with $q_n =d/n$, whenever $d \gg 1$ \cite{KIM20071961,krivelevich2001random}. As a result, the threshold for consistency and asymptotic normality for random $d$-regular graphs obtained in Corollary \ref{regulargraph} above, match with the threshold for Erd\H{o}s-R\'enyi graphs obtained in Theorem \ref{thm:er-consistency} with $q_n=d/n$, whenever $d \gg 1$. However, this analogy with the Erd\H{o}s-R\'enyi model is no longer valid when $d=O(1)$. In this case, to compute the threshold we invoke Proposition \ref{bdd-deg} instead, which deals with the case of general bounded degree graphs. Note that here it suffices to assume $\Delta(H) \leq d$, since $N(H, G_n)=0$ whenever $\Delta(H) > d$. Therefore, assuming $\Delta(H) \leq d$, there are two cases: (1) $|E(H)| = |V(H)|-1$ (that is, $H$ is a tree) and (2) $|E(H)| \geq |V(H)|$ (that is, $H$ has a cycle). In the second case, it can be easily shown that $N(H, G_n)=O_P(1)$, hence, by Proposition \ref{bdd-deg} (b) consistency and asymptotic normality does not hold. On the other hand, in the first case, by a inductive counting argument, it can shown that $N(H, G_n)= \Theta_P(n)$. Hence, by Proposition \ref{bdd-deg} (a), the threshold for consistency and asymptotic normality is $n p_n^{|V(H)|}\gg1$. The details of the proof are given in Appendix \ref{sec:randomregular_pf}.

\subsubsection{Graphons} In this section we apply our results for dense graph sequences. The asymptotics of dense graphs can be studied using the framework of graph limit theory (graphons), which was developed by Borgs et al. \cite{BCLSV08,BCLSV12} (for a detailed exposition see the book by Lov\'asz \cite{Lov12}), and commonly appears in various popular models for network analysis (see \cite{block_model_graphon,bickel2011method,chatterjee_pd,chatterjee_degree,crane_network_book,gao2018minimax,tang2013universally} and the references therein). For a detailed exposition of the theory of graph limits refer to Lov\' asz \cite{Lov12}. Here, we recall the basic definitions about the convergence of graph sequences. If $F$ and $G$ are two graphs, then define the homomorphism density of $F$ into $G$ by
$$t(F,G) :=\frac{|\hom(F,G)|}{|V (G)|^{|V (F)|},}$$
where  $|\hom(F,G)|$ denotes the number of homomorphisms of $F$ into $G$. In fact, $t(F, G)$ is the proportion of maps $\phi: V (F) \rightarrow V (G)$ which define a graph homomorphism. 

To define the continuous analogue of graphs, consider $\mathscr{W}$ to be the space of all measurable functions from $[0, 1]^2$ into $[0, 1]$ that satisfy $W(x, y) = W(y,x)$, for all $x, y 
\in [0, 1]$. For a simple graph $F$ with $V (F)= \{1, 2, \ldots, |V(F)|\}$, let
$$t(F,W) =\int_{[0,1]^{|V(F)|}}\prod_{(i,j)\in E(F)}W(x_i,x_j) \mathrm dx_1\mathrm dx_2\cdots \mathrm dx_{|V(F)|}.$$

\begin{definition}\cite{BCLSV08,BCLSV12,Lov12}\label{defn:graph_limit} A sequence of graphs $\{G_n\}_{n\geq 1}$ is said to {\it converge to $W$} if for every finite simple graph $F$, 
\begin{equation*}
\lim_{n\rightarrow \infty}t(F, G_n) = t(F, W).
\end{equation*}
\end{definition}

The limit objects, that is, the elements of $\mathscr{W}$, are called {\it graph limits} or {\it graphons}. A finite simple graph $G=(V(G), E(G))$ can also be represented as a graphon in a natural way: Define $$W^G(x, y) =\boldsymbol 1\{(\ceil{|V(G)|x}, \ceil{|V(G)|y})\in E(G)\},$$ that is, partition $[0, 1]^2$ into $|V(G)|^2$ squares of side length $1/|V(G)|$, and let $W^G(x, y)=1$ in the $(i, j)$-th square if $(i, j)\in E(G)$, and 0 otherwise.

The following result gives the threshold for consistency and asymptotic normality of the HT estimator for a sequence of graphs $\{G_n\}_{n \geq 1}$ converging to a graphon $W$. 

\begin{proposition}[Graphons] \label{dense} Fix a connected graph $H$ and suppose $G_n = (V(G_n), E(G_n))$ is a sequence of graphs converging to a graphon $W$ such that $t(H, W) > 0$. Then the following hold: 
\begin{itemize}
\item[(a)] 
If $|V(G_n)|p_n \gg 1$, then the HT estimator $\hat N(H, G_n)$ is consistent for $N(H, G_n)$ and the rescaled statistic $Z(H,G_n)$ is asymptotically normal. Moreover, 
$$\mathrm{Wass}(Z(H, G_n), N(0, 1)) \lesssim_{H} (|V(G_n)|p_n)^{-\frac{1}{2}}.$$
 
\item[(b)] If $|V(G_n)| p_n=O(1)$,  then the HT estimator $\hat N(H, G_n)$ is not consistent for $N(H, G_n)$ and the rescaled statistic $Z(H,G_n)$ is not asymptotically normal. 
 
 \end{itemize}
 
\end{proposition}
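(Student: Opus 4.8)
The plan is to handle (a) and (b) separately, the common input being that $G_n$ is a dense graph. Write $n:=|V(G_n)|$. By \eqref{def:count} the quantity $|\ah|\,N(H,G_n)$ is exactly the number of injective homomorphisms of $H$ (with its fixed labelling) into $G_n$; since the number of non-injective homomorphisms is $O_H(n^{|V(H)|-1})$ while $\hom(H,G_n)=t(H,G_n)\,n^{|V(H)|}$ with $t(H,G_n)\to t(H,W)>0$, this forces $N(H,G_n)=\Theta_H(n^{|V(H)|})$. More generally, a crude count of the ways to extend a set $A$ with $|A|=s$ to an ordered tuple in $V(G_n)_{|V(H)|}$ gives $t_H(A)\le C_H n^{|V(H)|-s}$, while the identity \eqref{th-iden} with $K=1$ together with the Cauchy--Schwarz inequality gives $\sum_{v\in V(G_n)}t_H(\{v\})^2=\Theta_H(n^{2|V(H)|-1})$.

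For part (a), assume $np_n\gg1$. First I would determine $\sigma(H,G_n)^2$: by Lemma \ref{compare}, $\vr[\hat N(H,G_n)]=\Theta_H\big(\sum_A t_H(A)^2 p_n^{-|A|}\big)$, and the estimates above show that the $|A|=1$ block equals $\Theta_H(n^{2|V(H)|-1}p_n^{-1})$ and dominates each block with $|A|=s\ge2$ by a factor of order $(np_n)^{s-1}\to\infty$; hence $\sigma(H,G_n)^2=p_n^{2|V(H)|}\vr[\hat N(H,G_n)]=\Theta_H((np_n)^{2|V(H)|-1})$. Next I would bound $\Ex[W_n]$: using $|X_{\s}-p_n^{|V(H)|}|\le X_{\s}+p_n^{|V(H)|}$ and expanding,
\[
\Ex\Big[\prod_{i=1}^4 |X_{\s_i}-p_n^{|V(H)|}|\Big]\ \le\ \sum_{S\subseteq\{1,2,3,4\}} p_n^{(4-|S|)|V(H)|+|\bigcup_{i\in S}\bar\s_i|}.
\]
If $\{\s_1,\dots,\s_4\}$ is connected, then following a spanning tree of its intersection graph one gets $u:=|\bigcup_i\bar\s_i|\le 4|V(H)|-3$, and for every $S$ one checks $(4-|S|)|V(H)|+|\bigcup_{i\in S}\bar\s_i|\ge u$ (trivial for $S=\{1,2,3,4\}$; for $|S|=3$ use $|\bigcup_{i\in S}\bar\s_i|\ge u-|V(H)|$; for $|S|=2$, say $S=\{1,2\}$, use $u\le|\bar\s_1\cup\bar\s_2|+2|V(H)|$; for $|S|\le1$ the exponent is at least $4|V(H)|>u$). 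Since the number of connected $4$-tuples with $|\bigcup_i\bar\s_i|=u$ is $O_H(n^u)$ and there are $O_H(1)$ overlap patterns in all, $\Ex[W_n]\lesssim_H\sum_{u\le 4|V(H)|-3}(np_n)^u=O_H\big((np_n)^{4|V(H)|-3}\big)$ once $np_n\ge1$. Therefore $\Ex[W_n]/\sigma(H,G_n)^4=O_H((np_n)^{-1})\to0$, and Theorem \ref{thm:wass} yields $\mathrm{Wass}(Z(H,G_n),N(0,1))\lesssim_H(np_n)^{-1/2}$, while Proposition \ref{ppn:variance_estimation}(a) yields the consistency of $\hat N(H,G_n)$.

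For part (b), assume $np_n=O(1)$, say $np_n\le C$. Then $\Pr(T(H,G_n)=0)\ge\Pr(\text{no vertex of }G_n\text{ is sampled})=(1-p_n)^n\ge e^{-2np_n}\ge e^{-2C}=:c_0>0$ (using $p_n\le\tfrac1{20}$). On the event $\{T(H,G_n)=0\}$ we have $\hat N(H,G_n)/N(H,G_n)=T(H,G_n)/(p_n^{|V(H)|}N(H,G_n))=0$, so $\hat N(H,G_n)/N(H,G_n)$ cannot converge to $1$ in probability; this proves inconsistency. For the failure of asymptotic normality, note that $\sigma(H,G_n)^2=\vr[T(H,G_n)]\in(0,\infty)$ (as $T(H,G_n)$ equals $0$ with positive probability and equals $N(H,G_n)>0$ with positive probability, hence is non-constant), and on $\{T(H,G_n)=0\}$ one has $Z(H,G_n)=-\Ex[T(H,G_n)]/\sigma(H,G_n)$, a deterministic value. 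If $Z(H,G_n)\stackrel{D}{\rightarrow}N(0,1)$, pass along any subsequence to a further subsequence where $\Ex[T(H,G_n)]/\sigma(H,G_n)\to\theta\in[0,\infty]$: if $\theta<\infty$, the limiting law would carry an atom of mass at least $c_0$ at $-\theta$; if $\theta=\infty$, then $\Pr(Z(H,G_n)\le-M)\ge c_0$ eventually for every fixed $M$. Both possibilities contradict $N(0,1)$ being atomless with $\Pr(N(0,1)\le-M)\to0$, so $Z(H,G_n)$ is not asymptotically normal.

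The main obstacle is the estimate for $\Ex[W_n]$ in part (a): one must control, for each of the finitely many ways that four labelled copies of $H$ can overlap into a connected union on $u$ vertices, both the number of such configurations in $G_n$ --- which is $\Theta(n^u)$ precisely because $G_n$ is dense --- and the exact power of $p_n$ generated by the centred indicators $X_{\s_i}-p_n^{|V(H)|}$, and then check that in every case the product is at most $(np_n)^{4|V(H)|-3}$, the value predicted by the fourth-moment heuristic; this is exactly what the inequality $(4-|S|)|V(H)|+|\bigcup_{i\in S}\bar\s_i|\ge u$ delivers. Everything else is the now-familiar reduction through Theorem \ref{thm:wass} and Proposition \ref{ppn:variance_estimation}, together with the elementary dense-graph counting in the first paragraph.
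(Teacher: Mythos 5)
Your proof is correct and takes essentially the same route as the paper: for (a) you lower-bound $\sigma(H,G_n)^2$ via Cauchy--Schwarz on the $|A|=1$ block of Lemma \ref{compare} and bound $\Ex[W_n]$ by counting connected $4$-unions on at most $4|V(H)|-3$ vertices (your subset inequality just re-derives what Lemma \ref{abs-bound} already provides), then feed these into Theorem \ref{thm:wass} and Proposition \ref{ppn:variance_estimation}. For (b) you reprove the content of Lemma \ref{converse-tool} from scratch, using the elementary event that no vertex is sampled in place of the FKG bound, but the argument and conclusion are the same.
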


Note that the assumption $t(H, W) > 0$ ensures that the density of the graph $H$ in the graphon $W$ is positive, which can be equivalently reformulated as $N(H, G_n) = \Theta(|V(G_n)|^{|V(H)|})$. In fact, as will be evident from the proof, the result above holds for any sequence of graphs with $N(H, G_n) = \Theta(|V(G_n)|^{|V(H)|})$.

\subsection{Organization} The rest of the article is organized as follows. The proof of Proposition \ref{ppn:variance_estimation} is given Section \ref{sec:pf_consistency}. Consequences of our results and future directions are discussed in Section \ref{sec:summary}. The proofs of Theorem \ref{thm:wass}, Proposition \ref{ppn:variance_estimation}, and a more general fourth-moment phenomenon for random multilinear forms are discussed in Appendix \ref{sec:pf_normal_ZHGn}. The thresholds for consistency and normality for the various graph ensembles discussed above in Section \ref{sec:graphs} are proved in Appendix \ref{sec:graphs_pf}. The relevant moment estimates and the proof of Theorem \ref{thm:normality} are given in Appendix \ref{sec:normal_ZHGn_II}. Finally, in Appendix \ref{sec:examples} we compute the asymptotics of the HT estimator in various examples, which illustrate the necessity of the different conditions in the results mentioned above.

\section{Proof of Theorem \ref{thm:consistency}}\label{sec:pf_consistency}

In this section, we prove the necessary and sufficient condition for the consistency of the estimate $\hat N(H, G_n)$. We start with a few definitions.  Fix an $\e>0$. For each set $ A \subset V(G_n)$ and each $\s\in V(G_n)_{|V(H)|}$, define the following events 
\begin{align}\label{eq:bnea} \mathscr{B}_{n,\e}(A):=\{t_H( A )> \e p_n^{|A|} N(H, G_n)\}, \quad \mathscr{B}_{n,\e}(\s)^c:=\bigcap_{A: A \subseteq \s, A \ne \emptyset}\mathscr{B}_{n,\e}(A)^c. \end{align}  Consider the following truncation of $T(H, G_n)$ (recall \eqref{def:stat}): 
\begin{align}\label{eq:THGn_epsilon}
T^+_{\e}(H, G_n)=\frac1{|\ah|}\sum_{\s\in V(G_n)_{|V(H)|}} \prod_{(i, j) \in E(H)} a_{s_i s_j} X_{\s}\ind \{\mathscr{B}_{n,\e}(\s)^c \}.
\end{align} 
Moreover, let $N_{\e}^+(H, G_n):=\frac{1}{p_n^{|V(H)|}}\Ex[T^+_{\e}(H, G_n)]$ be the truncation of the true motif count $N(H, G_n)$. This truncation has the following properties: 

\begin{lemma} \label{claim} Define 
$$M_n:=  \sum_{\substack{ A  \subset V(G_n) \\ 1 \leq  |A| \leq  |V(H)|}}  t_H( A )\ind \{\mathscr{B}_{n,\e}(A) \}.$$ Then the following hold: 
\begin{enumerate}
\item[$(a)$] $\frac{M_n}{2^{|V(H)|}-1} \le  N(H, G_n)- N_{\e}^+(H, G_n)  \le M_n$.
\item[$(b)$] $\Pr(T(H, G_n) \neq T^+_{\e}(H, G_n))\le \frac{M_n}{\e  N(H, G_n)}$.
\end{enumerate}
\end{lemma}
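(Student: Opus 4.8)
The proof hinges on the fact that the events $\mathscr{B}_{n,\e}(A)$ and $\mathscr{B}_{n,\e}(\s)$ are \emph{deterministic} (they depend on $G_n$ and $p_n$ only), so all randomness is carried by the Bernoulli variables $\{X_v\}$. The plan is to start by taking expectations in \eqref{eq:THGn_epsilon}: since $\Ex[X_\s]=p_n^{|V(H)|}$ for every $\s\in V(G_n)_{|V(H)|}$, we get $N_{\e}^+(H,G_n)=\frac1{|\ah|}\sum_{\s}M_H(\s)\ind\{\mathscr{B}_{n,\e}(\s)\}$, and hence, comparing with $N(H,G_n)=\frac1{|\ah|}\sum_\s M_H(\s)$,
\begin{align*}
N(H,G_n)-N_{\e}^+(H,G_n)=\frac1{|\ah|}\sum_{\s\in V(G_n)_{|V(H)|}}M_H(\s)\ind\{\mathscr{B}_{n,\e}(\s)^{\mathrm c}\}.
\end{align*}

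For part $(a)$, I would use that $\mathscr{B}_{n,\e}(\s)^{\mathrm c}=\bigcup_{\emptyset\ne A\subseteq\bar\s}\mathscr{B}_{n,\e}(A)$ together with the fact that $\bar\s$ has exactly $2^{|V(H)|}-1$ nonempty subsets, which yields the pointwise sandwich
\begin{align*}
\frac1{2^{|V(H)|}-1}\sum_{\emptyset\ne A\subseteq\bar\s}\ind\{\mathscr{B}_{n,\e}(A)\}\le \ind\{\mathscr{B}_{n,\e}(\s)^{\mathrm c}\}\le \sum_{\emptyset\ne A\subseteq\bar\s}\ind\{\mathscr{B}_{n,\e}(A)\}.
\end{align*}
Substituting this into the previous display and interchanging the order of summation — for a fixed $A$, the inner sum $\frac1{|\ah|}\sum_{\s:\bar\s\supseteq A}M_H(\s)$ equals $t_H(A)$ by \eqref{def:tj} — both the upper and the lower estimate collapse to $\sum_{A}t_H(A)\ind\{\mathscr{B}_{n,\e}(A)\}=M_n$ (respectively, to $M_n/(2^{|V(H)|}-1)$), which is exactly the assertion of $(a)$.

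For part $(b)$, I would first observe that $T(H,G_n)-T^+_{\e}(H,G_n)=\frac1{|\ah|}\sum_\s M_H(\s)X_\s\ind\{\mathscr{B}_{n,\e}(\s)^{\mathrm c}\}\ge 0$ is a sum of nonnegative terms, so $\{T(H,G_n)\ne T^+_{\e}(H,G_n)\}$ is precisely the event that some tuple $\s$ with $M_H(\s)=1$ and $\mathscr{B}_{n,\e}(\s)^{\mathrm c}$ has $X_\s=1$. Such a $\s$ contains some nonempty $A\subseteq\bar\s$ with $\mathscr{B}_{n,\e}(A)$, and $X_\s=1$ forces $\prod_{v\in A}X_v=1$; therefore
\begin{align*}
\{T(H,G_n)\ne T^+_{\e}(H,G_n)\}\subseteq\bigcup_{\substack{A\subseteq V(G_n),\ 1\le|A|\le|V(H)|\\ \mathscr{B}_{n,\e}(A)\ \text{occurs}}}\Big\{\textstyle\prod_{v\in A}X_v=1\Big\}.
\end{align*}
A union bound, $\Pr(\prod_{v\in A}X_v=1)=p_n^{|A|}$, and the defining inequality $t_H(A)>\e p_n^{|A|}N(H,G_n)$ (i.e. $p_n^{|A|}<t_H(A)/(\e N(H,G_n))$) of $\mathscr{B}_{n,\e}(A)$ then give $\Pr(T(H,G_n)\ne T^+_{\e}(H,G_n))\le\sum_{A\,:\,\mathscr{B}_{n,\e}(A)}p_n^{|A|}\le M_n/(\e N(H,G_n))$.

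The computations are elementary, and the only points requiring care are bookkeeping ones: taking the subsets $A$ inside the unordered set $\bar\s$, applying the sandwich bound in $(a)$ termwise before summing, and — the genuine idea in $(b)$ — replacing the event ``a bad copy of $H$ is observed'' by the strictly weaker event ``the vertices of a bad set $A$ are sampled''. This last replacement is what makes the argument work: it decouples the combinatorial count $t_H(A)$ from the sampling, leaving exactly the product $p_n^{|A|}t_H(A)$ that the definition of $\mathscr{B}_{n,\e}(A)$ controls, so no spurious $p_n^{|V(H)|}$ factor remains in the final bound.
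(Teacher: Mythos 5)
Your proof is correct and follows essentially the same route as the paper's: part (a) via the sandwich bound for the indicator of the union $\mathscr{B}_{n,\e}(\s)^{\mathrm c}=\bigcup_{\emptyset\ne A\subseteq\bar\s}\mathscr{B}_{n,\e}(A)$ followed by interchanging the order of summation to produce $t_H(A)$, and part (b) by reducing to the quantity $\sum_A p_n^{|A|}\ind\{\mathscr{B}_{n,\e}(A)\}$ and invoking the defining inequality of $\mathscr{B}_{n,\e}(A)$. The only cosmetic difference is that in (b) you use a direct union bound over the bad sets $A$ where the paper passes through $\Ex[T(H,G_n)-T^+_{\e}(H,G_n)]$; both lead to the same intermediate bound, and your version is, if anything, slightly cleaner.
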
	
	
\begin{proof} Note that 
$$\Delta_n:= N(H, G_n)-N_{\e}^+(H, G_n)=\frac1{|\ah|}\sum_{\s\in V(G_n)_{|V(H)|}} \prod_{(i, j) \in E(H)} a_{s_i s_j} \ind\{\mathscr{B}_{n,\e}(\s)\}.$$ 
Since $\mathscr{B}_{n,\e}(\s) = \bigcup_{A: A \subseteq \s, A \ne \emptyset}\mathscr{B}_{n,\e}(A)$ is the union of $2^{|V(H)|}-1$ many sets,  applying the elementary inequality
\begin{equation*}
\frac1m\sum_{r=1}^m \ind\{B_r\} \le \ind\left\{\bigcup\limits_{r=1}^m B_r \right \} \le \sum_{r=1}^m \ind\{B_r\},
\end{equation*}
for any finite collection of sets $B_1, B_2, \ldots, B_m$, gives 
$$\frac{M_n}{2^{|V(H)|}-1} \leq \Delta_n \leq M_n,$$
with 
\begin{align*}
M_n & =\frac1{|\ah|}\sum_{\s\in V(G_n)_{|V(H)|}} \sum_{\substack{ A  \subseteq \s \\ 1 \leq  |A| \leq  |V(H)|}} \prod_{(i, j) \in E(H)} a_{s_i s_j} \ind\{\mathscr{B}_{n,\e}(A)\}  =  \sum_{\substack{ A  \subset V(G_n) \\ 1 \leq  |A| \leq  |V(H)|}} t_H(A)\ind\{\mathscr{B}_{n,\e}(A)\}, 
		\end{align*}
where last equality follows by interchanging the order of the sum and recalling the definition of $t_H(A)$ in \eqref{def:tj}. This proves the result in (a). 

We now proceed to prove (b). For any $A \subset V(G_n)$ define $X_{A}:= \prod_{u \in A} X_u$. Hence,  recalling definitions \eqref{def:stat} and \eqref{eq:THGn_epsilon} gives, 
\begin{align*}
\Pr(T(H, G_n)\neq T^+_{\e}(H, G_n))   & \leq \Ex[T(H, G_n) - T^+_{\e}(H, G_n)]    \\ 
& \le  \sum_{\substack{ A  \subset V(G_n) \\ 1 \leq  |A| \leq  |V(H)|}} \Pr(X_{A}= 1)\ind\{\mathscr{B}_{n,\e}(A)\} \\ 
& \le  \sum_{\substack{ A  \subset V(G_n) \\ 1 \leq  |A| \leq  |V(H)|}} p_n^{|A|}  \ind\{t_H(A)>\e p_n^{|A|} N(H, G_n)\}  \\ 
& \le \frac1{\e  N(H, G_n)}  \sum_{\substack{ A  \subset V(G_n) \\ 1 \leq  |A| \leq  |V(H)|}}  t_H(A) \ind\{t_H(A)>\e p_n^{|A|} N(H, G_n)\} \\ 
& \leq \frac{M_n}{\e N(H, G_n)}
\end{align*}
This completes the proof of (b).
\end{proof}

\noindent{\it Proof of Theorem \ref{thm:consistency} (Sufficiency)}: Recall that condition \eqref{e:cond2} assumes $\frac{M_n}{N(H, G_n)} \rightarrow 0$, where $M_n$ is as defined above in Lemma \ref{claim}. Therefore, Lemma \ref{claim} and the condition in \eqref{e:cond2}  together implies that  
\begin{align}\label{eq:THGn_I}
\frac{\Ex[T_{\e}^+(H, G_n)]}{\Ex[T(H, G_n)]} = \frac{N_{\e}^+(H, G_n)}{N(H, G_n)}\to 1 \quad \text{and} \quad \Pr(T(H, G_n)= T^+_{\e}(H, G_n))\to 1, 
\end{align} 
as $n\to \infty$, for every fixed $\e>0$. Now, write
\[\frac{\hat N(H,G_n)}{\Ex[\hat N(H,G_n)]}=\frac{T(H,G_n)}{\Ex[T(H,G_n)]}=\frac{T(H,G_n)}{T^+_{\e}(H, G_n)} \cdot \frac{T^+_{\e}(H, G_n)}{\Ex[T^+_{\e}(H, G_n)]} \cdot \frac{\Ex[T^+_{\e}(H, G_n)]}{\Ex[T(H,G_n)]}.\] 
Note that, by \eqref{eq:THGn_I}, the first and the third ratios in the RHS above converge to $1$ in probability for every fixed $\e$. Therefore, to prove the consistency of $\hat N(H,G_n)$ it suffices to show that the ratio $\frac{T^+_{\e} (H, G_n)}{\Ex[T^+_{\e} (H, G_n)]} \stackrel{P}\rightarrow 1$, as $n\rightarrow\infty$ followed by $\e\rightarrow 0$. This follows by the using Chebyshev's inequality if we show that 
\begin{align}\label{eq:double}
\lim_{\e\rightarrow 0}\lim_{n\rightarrow\infty}\frac{\vr[T^+_{\e} (H, G_n)]}{(\Ex[T^+_{\e} (H, G_n)])^2}=0.
\end{align}
To this effect, we have 
\begin{align*}
& \vr\left[T^+_{\e}(H, G_n)\right] \\ 
& =  \frac1{|\ah|^2 } \sum_{\substack{\s_1, \s_2 \in V(G_n)_{|V(H)|} \\ \bar \s_1 \bigcap \bar \s_2 \ne \emptyset} }  \mathrm{Cov}(X_{\s}, X_{\s_2} )  M_H(\s_1) M_H(\s_2)  \ind\{\mathscr{B}_{n,\e}(\s_1)^c\} \ind\{\mathscr{B}_{n,\e}(\s_2)^c\}. 
\end{align*}
Now, if $|\bar \s_1  \bigcap \bar \s_2 |=K$, then $\operatorname{Cov}[X_{\s_1}, X_{\s_2}]=p_n^{2{|V(H)|}-K}-p_n^{2|V(H)|} \le p_n^{2|V(H)|-K}$. Thus, 
\begin{align}\label{e:uf}
& \vr\left[T^+_{\e}(H, G_n)\right] \nonumber \\ 
&  \le \frac{1}{|\ah|^2}\sum_{K=1}^{|V(H)|} p_n^{2 |V(H)| - K}\sum_{\substack{\s_1, \s_2 \in V(G_n)_{|V(H)|} \\ K=|\bar \s_1\bigcap \bar \s_2|}}  M_H(\s_1) M_H(\s_2)  \ind\{\mathscr{B}_{n,\e}(\s_1)^c\} \ind\{\mathscr{B}_{n,\e}(\s_2)^c\}.
\end{align}
We now focus on the inner sum in the RHS of \eqref{e:uf}. Note that 
\begin{align}\label{eq:var_12}
\sum_{\substack{\s_1,\s_2 \in V(G_n)_{|V(H)|} \\ K=|\bar \s_1\bigcap \bar \s_2|}} & M_H(\s_1) M_H(\s_2)  \ind\{\mathscr{B}_{n,\e}(\s_1)^c\} \ind\{\mathscr{B}_{n,\e}(\s_2)^c\} \nonumber \\ 
& =\sum_{\substack{A \subset V(G_n) \\ |A| = K }} \sum_{\substack{\s_1,\s_2 \in V(G_n)_{|V(H)|} \\ \bar \s_1  \cap\bar \s_2 = A}}  M_H(\s_1) M_H(\s_2)  \ind\{\mathscr{B}_{n,\e}(\s_1)^c\} \ind\{\mathscr{B}_{n,\e}(\s_2)^c\}  \nonumber  \\ 
& \le \sum_{\substack{A \subset V(G_n) \\ |A| = K }}  \sum_{\s_1: \bar \s_1\supseteq A} \sum_{\s_2: \bar \s_2\supseteq A} M_H(\s_1) M_H(\s_2)  \ind\{\mathscr{B}_{n,\e}(\s_1)^c\} \ind\{\mathscr{B}_{n,\e}(\s_2)^c\}. 
\end{align}
The argument inside the sum now separates out. Therefore, applying the fact
$$\sum_{\s_1: \bar \s_1 \supseteq A } M_H(\s_1)\ind\{\mathscr{B}_{n,\e}(\s_1)^c\} \le \sum_{\bm s_1: \bar \s_1\supseteq A} M_H(\s_1)\ind\{\mathscr{B}_{n,\e}(A)^{c} \}=|\aut(H)|t_H(A)\ind\{\mathscr{B}_{n,\e}(A)^{c} \},$$
it follows  from \eqref{e:uf} and \eqref{eq:var_12} that 
\begin{align*}
\vr\left[T^+_{\e}(H, G_n)\right] & \le \sum_{K=1}^{|V(H)|}p_n^{2|V(H)|-K} \sum_{\substack{A \subset V(G_n) \\ |A| = K }}  t_H(A)^2\ind\{\mathscr{B}_{n,\e}(A)^{c} \}  \nonumber \\ 
& \le \e N(H, G_n)\sum_{K=1}^{|V(H)|} p_n^{2 |V(H)|}  \sum_{\substack{A \subset V(G_n) \\ |A| = K }}  t_H( A )\ind\{\mathscr{B}_{n,\e}(A)^{c}\} \tag*{(since $ t_H( A ) \leq \e p_n^{|A|} N(H, G_n)$ on $\mathscr{B}_{n,\e}(A)^{c}$)} \nonumber \\ 
&\le \e p_n^{2 |V(H)|}  N(H, G_n) \sum_{K=1}^{|V(H)|}  \sum_{\substack{A \subset V(G_n) \\ |A| = K }} t_H(A)  \nonumber \\ 
&  \e p_n^{2 |V(H)|} N(H, G_n)\sum_{K=1}^{|V(H)|}\binom{|V(H)|}{K }  N(H, G_n) 
 = \e p_n^{2 |V(H)|} (2^{|V(H)|} - 1) N(H, G_n)^2  ,
\end{align*}
where the last line uses \eqref{th-iden}. Since \eqref{eq:double} is immediate from this, we have verified sufficiency. \\

\noindent{\it Proof of Theorem \ref{thm:consistency} (Necessity)}: We will show the contrapositive statement, that is, if \eqref{e:cond2} fails, then $\hat N(H, G_n)$ is not consistent for $N(H, G_n)$. Towards this, assume \eqref{e:cond2} fails. Define 
\begin{align}\label{eq:E1}
E_1:= \left\{ X_{\s}=0 \text{ for all } \s\in V(G_n)_{|V(H)|}\mbox{ with }\ind\{\mathscr{B}_{n,\e}(\s)^c\}=0 \right\},
\end{align} 
and, for $1 \leq K \leq |V(H)|$, let 
$$E_{2, K} = \left\{ X_A : = \prod_{u \in A } X_u =0 \text{ for all } A \subset V(G_n) \text{ where } |A| = K \text{ and } \ind\{\mathscr{B}_{n,\e}(A)\}  = 1 \right \}.$$  Take any $\s\in V(G_n)_{|V(H)|}$ with $\ind\{\mathscr{B}_{n,\e}(\s)^c\}=0$. By definition (recall \eqref{eq:bnea}), this implies $\ind\{\mathscr{B}_{n,\e}(A)\}=1$ for some $A\subseteq \s, A\neq \emptyset$. In particular, under the event $\bigcap_{K=1}^{|V(H)|} E_{2, K}$, we have $X_{A}=0$, forcing $X_{\s}=0$. Hence, $E_1 \supset \bigcap_{K=1}^{|V(H)|} E_{2, K}$.  Note that 
$$E_{2, K} = \bigcap_{K=1}^{|V(H)|} \bigcap_{\substack{ A \subset V(G_n) : |A| = K  \\ \ind\{\mathscr{B}_{n,\e}(A)\}  = 1} } \{ X_{A}=0 \},$$
and the event $\{X_A=0\}$ is a decreasing event, for all $A \subset V(G_n)$ with $1 \leq |A| \leq |V(H)|$.\footnote{An event $\mathcal{D} \subseteq \{0, 1\}^{|V(G_n)|}$ is said to be {\it decreasing} if for two vectors $\bm x=(x_a)_{a \in V(G_n)} \in \{0, 1\}^{|V(G_n)|}$ and $\bm y = (y_a)_{a \in V(G_n)} \in   \{0, 1\}^{|V(G_n)|}$, with $\{a: y_a = 1\} \subseteq \{a: x_a = 1\}$, $\bm x \in \mathcal{D}$ implies $\bm y \in \mathcal D$. Then the FKG inequality states that if $\mathcal{D}_1, \mathcal{D}_2 \subseteq \{0, 1\}^{|V(G_n)|}$ are two decreasing events, $\mathbb P(\mathcal{D}_1 \cap \mathcal{D}_2) \geq \mathbb P(\mathcal{D}_1) \mathbb P(\mathcal{D}_2)$ (see \cite[Chapter 2]{inequality}).} Then the FKG inequality between decreasing events for product measures on $\{0, 1\}^{|V(G_n)|}$ \cite[Chapter 2]{inequality} gives,  
\begin{align*}
\Pr(E_1)  \ge \Pr\left(\bigcap\limits_{K=1}^{|V(H)|} E_{2, K} \right) & \ge  \prod_{K=1}^{|V(H)|} \prod_{\substack{ A \subset V(G_n) : |A| = K  \\ \ind\{\mathscr{B}_{n,\e}(A)\}  = 1} }  \Pr(X_{A}=0) \nonumber \\ 
& \ge \prod_{K=1}^{|V(H)|} (1-p_n^K)^{\sum_{A \subset V(G_n) : |A| = K }\ind\{\mathscr{B}_{n,\e}(A)\}} 
\end{align*}
Now, since $p_n$ is bounded away from 1 (recall \eqref{eq:boundp}), there exists a constant $c > 0$ such that $\log(1-p_n^K)> -c p_n^{K}$, for all $1 \leq K \leq |V(H)|$. Hence, 
\begin{align}\label{eq:pfconsistency_I}
\Pr(E_1) &  \ge \exp\left(-c\sum_{K=1}^{|V(H)|} p_n^{|K|} \sum_{ A \subset V(G_n) : |A| = K }\ind\{\mathscr{B}_{n,\e}(A)\}\right)  \nonumber \\ 
&  \ge \exp\left(-\frac c{\e N(H, G_n)}\sum_{K=1}^{|V(H)|} \sum_{A \subset V(G_n)  : |A| = K }t_H(A)\right) \nonumber \\ 
& \ge e^{-\frac{c(2^{|V(H)|}-1)}{\e}} ,
\end{align}
where the last step uses \eqref{th-iden}. Now, since \eqref{e:cond2} does not hold, there exists $\varepsilon > 0$ and $\delta \in (0, 1)$ such that
	\begin{equation*}
	\limsup_{n\to\infty} \frac1{N(H, G_n)}\sum_{K=1}^{|V(H)|} \sum_{ A \subset V(G_n)  : |A| = K  }t_H(A)\ind\{\mathscr{B}_{n,\e}(A)\} > (2^{|V(H)|}-1)\frac{2\delta}{1+\delta},
	\end{equation*}
From Lemma \ref{claim}, it follows that along a subsequence, $N(H, G_n)-N_\e^+(H, G_n) > \frac{2\delta}{1+\delta} N(H, G_n)$, that is, $(1+\delta)  N_\e^+(H, G_n) < (1-\delta) N(H, G_n)$. Thus, by Markov inequality, along a subsequence
\begin{align}\label{eq:pfconsistency_II}
\Pr\left(T^+_{\e}(H, G_n) \ge (1-\delta) p_n^{|V(H)|} N(H, G_n) \right) & \le \Pr\left( T^+_{\e}(H, G_n) \ge (1+\delta) p_n^{|V(H)|} N_\e^+(H, G_n) \right) \nonumber \\ 
& \le \frac1{1+\delta}.
\end{align}
Also, observe that $\{T^+_{\e}(H, G_n)\le (1-\delta)p_n^{|V(H)|} N(H, G_n)\}$ is a decreasing event, because if $\bm X=(X_a)_{a \in V(G_n)} \in \{T^+_{\e}(H, G_n)\le (1-\delta)p_n^{|V(H)|} N(H, G_n)\}$ then any vector $\bm X' = (X_a')_{a \in V(G_n)}$ obtained changing a subset of the ones in $\bm X$ to zeros does not increase the value of $T^+_{\e}(H, G_n)$ and hence,  $\bm X'  \in \{T^+_{\e}(H, G_n)\le (1-\delta)p_n^{|V(H)|} N(H, G_n)\}$. Similarly, $E_1$ (recall definition in \eqref{eq:E1}) is a decreasing event. Hence, by the FKG inequality,  
\begin{align}\label{eq:pfconsistency_III}
\Pr\left(T^+_{\e}(H, G_n)\le (1-\delta)p_n^{|V(H)|} N(H, G_n) \big| E_1\right) \ge \Pr\left(T^+_{\e}(H, G_n)\le (1-\delta)p_n^ {|V(H)|} N(H, G_n) \right). 
\end{align} 
This implies, 
\begin{align*}
\Pr(\hat N(H, G_n) & \le (1-\delta)   N(H, G_n)) \nonumber \\ 
& \ge  \Pr(T(H, G_n)\le (1-\delta)p_n^{|V(H)|}  N(H, G_n)|E_1)\Pr(E_1) \nonumber \\ 
	& \ge \Pr(T^+_{\e}(H, G_n)\le (1-\delta)p_n^{|V(H)|} N(H, G_n)|E_1)\Pr(E_1) \tag*{(since $T(H,G_n)\ge T^+_\e(H,G_n))$} \nonumber \\ 
&  \ge \Pr(T^+_{\e}(H, G_n)\le (1-\delta)p_n^{|V(H)|} N(H, G_n))\Pr(E_1) \tag*{(by \eqref{eq:pfconsistency_III})} \nonumber \\ 
&\ge \frac{\delta}{1+\delta}\Pr(E_1)
	\end{align*}
where the last step uses \eqref{eq:pfconsistency_II}. This is a contradiction to the consistency of $\hat N(H, G_n)$, since $\liminf_{n\rightarrow\infty}\Pr(E_1)>0$ by \eqref{eq:pfconsistency_I},  completing the proof of the desired result.

\section{Discussions and Future Directions}
\label{sec:summary}

The theme that emerges from the examples considered in the paper is that in most of the natural network models, the HT estimator $\hat N(H, G_n)$ is consistent and asymptotically normal whenever the expected number of copies of $H$ in the sampled graph diverges, and inconsistent and not asymptotically normal otherwise. For dense graphs (graphons) this implies, sampling at rate $p_n \gg 1/|V(G_n)|$ ensures that the HT estimator is consistent and asymptotically normal. For sparser graphs one needs to sample at rate $p_n \gg N(H, G_n)^{-\frac{1}{|V(H)|}}$ which can be much larger, depending on the magnitude of $N(H, G_n)$. In particular, this implies that there is a non-trivial sampling rate beyond which HT estimator is consistent for sparser graphs (even for bounded degree graphs), as soon as the number of copies of $H$ in $G_n$ is diverging. 
An interesting question is whether under this assumption ($N(H,G_n)\to \infty$),  it is possible to improve the estimation accuracy of $N(H,G_n)$ using other sampling strategies, such as neighborhood sampling \cite{handcock2010modeling,klusowski2018counting,kolaczyk2009statistical}, snowball sampling \cite{goodman1961snowball}, or random walk based exploration methods \cite{leskovec2006sampling,ribeiro2010estimating}. However, not much is known about the  asymptotic fluctuations of the resulting estimates in these sampling models. In fact, it has been shown recently in \cite{klusowski2018counting} that the natural inverse probability weighted estimator might not be minimax optimal in the neighborhood sampling scheme. Therefore, it is encouraging to see that the HT estimator in the simple (albeit idealized) subgraph sampling model provides consistent and asymptotically exact confidence intervals for large classes of natural network models. These results are the first steps towards understanding properties of more practical (and complicated) models for network sampling, and will provide useful benchmarks for comparing the performances of different estimates arising from other sampling schemes.

From a computational perspective, the subgraph sampling scheme has time complexity $O(|V(G_n)|)$. Since on average the sampled graph as $O(p_n|V(G_n)|)$ vertices, one way to reduce the computational cost is to sample without replacement a uniform random subset of size $N=p_n |V(G_n)|$ from $V(G_n)$, and then consider the induced graph as before. This can be done in $O(N \log N)$ time \cite{sampling_algorithm_gb,sampling_algorithm}, which is faster whenever $N \ll |V(G_n)|$ (up  to a logarithmic factor). In certain situations, the asymptotic properties of the HT estimator in the sampling without replacement  model should be the same as that in the subgraph sampling model with sampling probability $p_n=N/|V(G_n)|$.  For example, we conjecture that using  \cite[Theorem 4]{diaconis1980finite} one should be able to derive consistency of the HT estimator in the sampling without replacement model, at least for certain regimes of $p_n$. In a similar manner, using the asymptotic normality  for the HT estimator in the subgraph sampling model along with  the conditional approach in \cite{bertail2017empirical}, one should be able  verify a similar result for the sampling without replacement model in certain regimes of $p_n$ as well. The exact detection boundary of the sampling without replacement model seems to be an interesting question for possible future research.
\\

\small{
\noindent\textbf{Acknowledgements.} The authors thank Sohom Bhattacharya for pointing out the reference \cite{KIM20071961}, and Jason Klusowski for helpful discussions. The authors also thank the Associate Editor and the anonymous referees for their detailed and thoughtful comments which greatly improved the quality and the presentation of the paper. The authors
also thank Ayoushman Bhattacharya and Nilanjan Chakraborty for pointing out an error in a previous draft. }

	\bibliographystyle{plain}	
	\bibliography{bib}

\appendix

\normalsize

\section{Proofs of Theorem \ref{thm:wass} and Proposition \ref{ppn:variance_estimation}  }  
\label{sec:pf_normal_ZHGn}

Fix $r \geq 2$ and start by defining 
\begin{align}\label{e:notations}
{Z}_{\s}:=X_{\s}-p_n^{r}.
\end{align}   
We begin with some moment estimates in Appendix \ref{sec:expectation}. These estimates are used to complete the proofs of Theorem \ref{thm:wass} and  Proposition \ref{ppn:variance_estimation}  in Appendix \ref{sec:clt_pf} and Appendix \ref{sec:variance_estimation_pf}, respectively.

\subsection{Moment Estimates} 
\label{sec:expectation}

In this section we collect various estimates on the mixed moments of the random variables $\{Z_{\s}: \s \in V(G_n)_r \}$, which arise in the higher-order moments of $Z(H, G_n)$.

\begin{lemma}[Bound on the free product] \label{free-bound} Let $\s_1,\s_2,\ldots,\s_L \in V(G_n)_{r}$ with $a=|\bigcup_{j=1}^L \bar \s_j|$. Assume further that $\bar \s_i \bigcap \bigcup_{j\neq i}\{\bar \s_j\} \neq \emptyset$, for all $1 \leq i \leq L$. Then for any $p_n \in [0,1]$, \begin{align} \label{e:free-bound}
	\left|\Ex[{Z}_{\s_1}{Z}_{\s_2}\ldots {Z}_{\s_L}]-p_n^{a}\right| \leq (2^L - 1) p_n^{a+1}.
	\end{align}
Moreover, for $L=2$, we also have $\Ex[{Z}_{\s_1}{Z}_{\s_2}]\ge p_n^a(1-p_n) \ge 0$.	
\end{lemma}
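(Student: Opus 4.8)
Write $r = |V(H)|$ throughout, and recall $Z_{\s} = X_{\s} - p_n^{r}$ where $X_{\s} = \prod_{u=1}^{r} X_{s_u}$ is a product of i.i.d.\ $\mathrm{Ber}(p_n)$ variables. The key structural observation is that $Z_{\s_1} Z_{\s_2} \cdots Z_{\s_L}$ is a polynomial in the $X_v$'s, and since each $X_v$ is $\{0,1\}$-valued we have $X_v^k = X_v$ for all $k \geq 1$; hence, upon expanding, the monomial $\prod_{v \in \bigcup_j \bar\s_j} X_v$ appears, and this monomial has expectation exactly $p_n^{a}$ where $a = |\bigcup_{j=1}^L \bar\s_j|$. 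So the plan is to isolate this ``top'' term and bound the remainder.

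\smallskip

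First I would expand $\prod_{j=1}^{L} Z_{\s_j} = \prod_{j=1}^{L}(X_{\s_j} - p_n^{r})$ by multilinearity over the $L$ factors, obtaining $\sum_{S \subseteq [L]} (-1)^{L - |S|} p_n^{r(L - |S|)} \prod_{j \in S} X_{\s_j}$. Taking expectations, each term $\Ex\bigl[\prod_{j \in S} X_{\s_j}\bigr] = p_n^{|\bigcup_{j \in S} \bar\s_j|}$. The full sum $S = [L]$ contributes exactly $p_n^{a}$. For every proper subset $S \subsetneq [L]$, I claim $|\bigcup_{j \in S} \bar\s_j| \geq a - r + 1 \ge \cdots$; more usefully, I want to show each such term contributes a quantity that is at most $p_n^{a+1}$ in absolute value once we combine it with the trivial bound. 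The cleanest way: note that for $S \subsetneq [L]$, the exponent $r(L-|S|) + |\bigcup_{j\in S}\bar\s_j| \ge a + 1$. Indeed, $L - |S| \geq 1$, and each added factor $X_{\s_j}$ can increase the union by at most $r$, so $|\bigcup_{j\in S}\bar\s_j| \geq a - r(L - |S|)$, giving exponent $\geq a$; to upgrade to $a+1$ I use the connectivity hypothesis $\bar\s_i \cap \bigcup_{j \neq i}\bar\s_j \neq \emptyset$, which guarantees that adjoining any one of the missing tuples $\s_i$ ($i \notin S$) to the union of a \emph{connected} sub-collection increases the size by \emph{at most} $r - 1$ (since at least one vertex is shared) — but I need a little care since $S$ itself need not be connected. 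The robust route is to simply bound crudely: since each factor satisfies $|Z_{\s_j}| \le 1$ and $\Ex Z_{\s_j} = p_n^{r}(1 - p_n^{r})$... actually the term-by-term exponent count is cleaner, so I would argue that for $S \subsetneq [L]$, pick $i \notin S$; since $\bar\s_i$ meets $\bigcup_{j\ne i}\bar\s_j \supseteq$ (nothing forced about $S$), I instead note $|\bigcup_{j\in S}\bar\s_j| + r(L-|S|) \ge |\bigcup_{j\in S \cup\{i\}} \bar\s_j| + r(L - |S| - 1) + 1$ because adding $\s_i$ to $\bigcup_{j\in S}\bar\s_j$ raises the size by at most $r$ but the power of $p_n$ drops by $r$, and one can iterate; the point is that the hypothesis lets us ``save'' at least $1$ somewhere along the way. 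Summing over the $2^L - 1$ proper subsets, each contributing at most $p_n^{a+1}$ in absolute value, yields the bound $(2^L - 1) p_n^{a+1}$.

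\smallskip

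For the final sentence ($L = 2$): here $\bar\s_1 \cap \bar\s_2 \neq \emptyset$ by hypothesis, so write $k := |\bar\s_1 \cap \bar\s_2| \ge 1$ and $a = 2r - k$. Directly, $\Ex[Z_{\s_1} Z_{\s_2}] = \Ex[X_{\s_1} X_{\s_2}] - p_n^{r}\Ex[X_{\s_1}] - p_n^{r}\Ex[X_{\s_2}] + p_n^{2r} = p_n^{2r - k} - p_n^{2r} = p_n^{a}(1 - p_n^{k}) \ge p_n^{a}(1 - p_n) \ge 0$, using $k \ge 1$ and $p_n \in [0,1]$. This gives both the nonnegativity and the stated lower bound.

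\smallskip

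\textbf{Main obstacle.} The genuinely delicate point is the exponent-counting argument showing that every proper-subset term in the expansion carries a power of $p_n$ that is at least $a+1$, rather than merely $\ge a$. Getting the ``$+1$'' for \emph{all} proper $S$ (not just those yielding connected sub-collections) is where the connectivity hypothesis $\bar\s_i \cap \bigcup_{j\ne i}\bar\s_j \neq \emptyset$ must be used carefully — I would phrase it as: for any $S \subsetneq [L]$ and any $i \notin S$, $|\bigcup_{j\in S}\bar\s_j \cup \bar\s_i| \le |\bigcup_{j\in S}\bar\s_j| + (r-1)$ whenever $\bar\s_i$ meets $\bigcup_{j \in S}\bar\s_j$, and otherwise I pass to a larger $S$; a short induction on $L - |S|$ then closes it. Once that exponent bookkeeping is pinned down, the rest is a one-line triangle inequality over $2^L - 1$ terms.
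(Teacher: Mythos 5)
Your proposal is correct and follows essentially the same route as the paper: expand $\prod_j(X_{\s_j}-p_n^{r})$ multilinearly, identify the top term $\Ex[\prod_j X_{\s_j}]=p_n^{a}$, and use the overlap hypothesis to show every one of the $2^L-1$ proper-subset terms carries exponent at least $a+1$ (the paper phrases this as the strict inequality $a<|\bigcup_{t}\bar\s_{j_t}|+(L-K)r$); your $L=2$ computation is the same as the paper's, done directly rather than via conditioning. The "$+1$" step you flag as delicate closes in one line: equality in the union bound would force some $\bar\s_i$ with $i\notin S$ to be disjoint from $\bigcup_{j\neq i}\bar\s_j$, contradicting the hypothesis.
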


\begin{proof} Using  $\Ex[\prod_{j=1}^{L} {X}_{\s_j}]=p_n^a$, gives 
\begin{align} \label{e:diffeq} 
 \left | \Ex[{Z}_{\s_1}{Z}_{\s_2}\ldots {Z}_{\s_L}] - p_n^a \right| & =
\left|\Ex[{Z}_{\s_1}{Z}_{\s_2}\ldots {Z}_{\s_L}] - \Ex[{X}_{\s_1}{X}_{\s_2}\ldots {X}_{\s_L} ] \right| \nonumber \\ 
& \le \sum_{K=0}^{L-1}\sum_{1 \leq j_1 < j_2 <\ldots < j_t \leq K } p_n^{|V(H)|(L-K)}\Ex\left[\prod_{t=1}^K X_{\s_{j_t}} \right].
	\end{align}
Note that $\Ex\left[\prod_{t=1}^K X_{\s_{j_t}} \right] =p_n^{|\bigcup_{t=1}^K \bar \s_{j_t}|}$, and 
\begin{align*} 
a=\left|\bigcup_{j=1}^L \bar \s_j \right| < \left|\bigcup_{t=1}^{K} \bar \s_{j_t} \right| + (L-K) r, \end{align*} 
where the inequality above is strict because of the given condition. Therefore, each term on the RHS of \eqref{e:diffeq} can be bounded above by $p_n^{a+1}$. Note that there are  $2^{L}-1$ terms in the double sum in the RHS of \eqref{e:diffeq}.  Hence, the RHS of \eqref{e:diffeq} is bounded above by $(2^{L}-1)p_n^{a+1}$, which completes the proof of \eqref{e:free-bound}.
	
Now, for $L=2$, by the tower property of conditional expectations it follows that
\begin{align*}
\Ex\left[\Ex\left({Z}_{\s_1}{Z}_{\s_2}\Big|\{ X_j: j \in \bar \s_1 \bigcap \bar \s_2\} \right) \right] & =p_n^{|\bar \s_1\setminus	\bar \s_2|+|\bar \s_2\setminus \bar \s_1|} \Ex\left[ \prod_{j \in  \bar \s_1 \bigcap \bar \s_2 } X_j \right] - p^{2 r} \nonumber \\ 
& =p_n^{\left|\bar \s_1\bigcup \bar \s_2 \right|}(1-p_n^{\left|\bar \s_1\bigcap \bar \s_2 \right|}) \ge p_n^a(1-p_n),
\end{align*}
since $|\bar \s_1\bigcup \bar \s_2 | = a$ and $|\bar \s_1\bigcap \bar \s_2| \geq 1$. 
\end{proof}

Note that the condition $ \s_i \bigcap \bigcup_{j\neq i}\{\s_j\} \neq \emptyset$, for all $1 \leq i \leq L$, ensures that the $\Ex[Z_{\s_1} Z_{\s_2} \ldots Z_{\s_L}] \ne 0$. Otherwise, one of the ${Z}_{\s_i}$ factors out from the expectation to yield a zero expectation. 

\begin{lemma}[Bound for absolute product] \label{abs-bound}  Let $\s_1,\s_2,\ldots,\s_L \in V(G_n)_{r}$ with $a=| \bigcup_{j=1}^L \bar \s_j |$. If $p_n$ satisfies \eqref{eq:boundp}, then \begin{align}\label{eq:moment_Z_bound}
	 (1-\kappa)^L p_n^{a}  \le \Ex|{Z}_{\s_1}{Z}_{\s_2}\ldots {Z}_{\s_L}|\le  (L+2)! p_n^{a} .
	\end{align} 
\end{lemma}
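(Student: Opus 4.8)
\textbf{Proof plan for Lemma~\ref{abs-bound}.}
The upper bound is the easy direction: expanding $Z_{\s_j} = X_{\s_j} - p_n^r$ and using the multinomial/triangle inequality, $\Ex|Z_{\s_1}\cdots Z_{\s_L}| \le \Ex\big[\prod_{j=1}^L (X_{\s_j} + p_n^r)\big]$, and each of the $2^L$ terms in the expansion is of the form $p_n^{r(L-K)}\Ex[\prod_{t=1}^K X_{\s_{j_t}}] = p_n^{r(L-K) + |\bigcup_t \bar\s_{j_t}|}$. Since $\bigcup_t \bar\s_{j_t} \subseteq \bigcup_{j}\bar\s_j$ and adding back the missing tuples costs a nonnegative power of $p_n \le 1$, each term is at least $p_n^a$ --- wait, that is the wrong direction for an upper bound, so instead one bounds each term by $p_n^{|\bigcup_t \bar\s_{j_t}|} \ge$ hmm. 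Let me restate: for the upper bound, each term equals $p_n^{r(L-K)+|\bigcup_t \bar\s_{j_t}|}$, and one checks $r(L-K) + |\bigcup_t \bar\s_{j_t}| \ge a$ is \emph{not} generally true, but $p_n \le 1$ gives $p_n^{\text{anything} \ge 0}\le 1$; the clean bound is that each term is $\le p_n^{a_0}$ where $a_0 = \min(\text{exponent})$. The honest route: each term is at most $1$ in the trivial bound, and there are $2^L$ of them, but to get the factor $p_n^a$ one groups more carefully --- actually the clean statement is that $\Ex[\prod_j(X_{\s_j}+p_n^r)]$, after expansion, has every monomial divisible (as a power of $p_n$, using $p_n\le 1$ to drop excess) by $p_n^a$ only after one notes $X_{\s_j}$ already ``uses up'' the vertices. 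The cleanest argument: condition on $\{X_v : v \in \bigcup_j \bar\s_j\}$; the event that all these $X_v=1$ has probability $p_n^a$, and on its complement at least one $Z_{\s_j}$ is $-p_n^r$ while the rest are bounded by $1$ in absolute value, so a crude bound peels off factors. I would organize the upper bound as: $|Z_{\s_1}\cdots Z_{\s_L}| \le \prod_j |Z_{\s_j}| \le \prod_j(X_{\s_j} + p_n^r) \le \prod_j(X_{\s_j}+1)\cdot$ (nothing helps here) --- so instead directly: $\Ex|Z_{\s_1}\cdots Z_{\s_L}| \le \sum_{S\subseteq[L]} p_n^{r(L-|S|)}\Ex[\prod_{j\in S}X_{\s_j}] = \sum_{S} p_n^{r(L-|S|) + |\bigcup_{j\in S}\bar\s_j|}$, and since $r(L-|S|) + |\bigcup_{j\in S}\bar\s_j| \ge |\bigcup_{j\in[L]}\bar\s_j| = a$ (each tuple not in $S$ contributes its $r$ vertices, more than enough to cover what it would add to the union), every term is $\le p_n^a$, and there are $2^L \le (L+2)!$ of them, giving the upper bound.

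For the lower bound $p_n^a \lesssim_r \Ex|Z_{\s_1}\cdots Z_{\s_L}|$, the idea is to isolate the event $\mathcal{E} = \{X_v = 1 \text{ for all } v \in \bigcup_j \bar\s_j\}$, which has probability exactly $p_n^a$, on which every $X_{\s_j} = 1$ and hence $|Z_{\s_1}\cdots Z_{\s_L}| = (1-p_n^r)^L$. Thus $\Ex|Z_{\s_1}\cdots Z_{\s_L}| \ge p_n^a (1-p_n^r)^L \ge p_n^a(1 - p_n)^{rL}$ --- wait, $(1-p_n^r)^L \ge (1-p_n^r)^L$, and since $\limsup p_n < 1$ (assumption~\eqref{eq:boundp}), we have $1 - p_n^r \ge 1 - (\limsup p_n)^r \ge c_r > 0$ for all $n$ large, so $(1-p_n^r)^L \ge c_r^L$, a positive constant depending only on $r$ and $L$ (hence only on $r$ and $H$ since $L$ is bounded in the applications, but the statement as written has $L$ free, so the constant depends on $r$ and $L$ --- consistent with $\lesssim_r$ if we also absorb $L$, or one simply notes $c_r^L \ge c_r^{(L+2)!}$ is too weak; more carefully $(1-p_n^r)^L$ with $p_n \le$ some bound $<1$ is $\ge$ a constant, and since the lemma's hidden constant is allowed to depend on $r$ and the context fixes $L$, this is fine). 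So $\Ex|Z_{\s_1}\cdots Z_{\s_L}| \ge c_r^L\, p_n^a$, which is the lower bound.

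\textbf{Main obstacle.} The only real subtlety is the inequality $r(L-|S|) + |\bigcup_{j\in S}\bar\s_j| \ge a$ used in the upper bound: one must argue that for each $j\notin S$, the tuple $\bar\s_j$ has exactly $r$ elements, so the $r(L-|S|)$ ``budget'' more than covers the vertices in $\bigcup_{j\notin S}\bar\s_j \setminus \bigcup_{j\in S}\bar\s_j$, whose cardinality is at most $r(L-|S|)$; adding this to $|\bigcup_{j\in S}\bar\s_j|$ gives at least $|\bigcup_{j\in[L]}\bar\s_j| = a$. This is elementary counting but worth stating explicitly. The lower bound is immediate once one notes the event $\mathcal{E}$ and invokes~\eqref{eq:boundp} to keep $1-p_n^r$ bounded away from zero; no connectivity hypothesis is even needed for either bound (it is needed only to ensure the expectation in Lemma~\ref{free-bound} is asymptotically $p_n^a$ rather than being killed, but here absolute values make everything automatically of the right order).
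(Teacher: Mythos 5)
Your final argument is correct for both directions, though the writeup needs to be cleaned of its many false starts before it could go in a paper. The lower bound is identical to the paper's: restrict to the event that every vertex of $\bigcup_j \bar\s_j$ is sampled (probability $p_n^a$), on which the product equals $(1-p_n^r)^L$, and invoke \eqref{eq:boundp} to keep this factor bounded below. For the upper bound you take a genuinely different route. The paper proceeds by induction on $L$: it splits the expectation according to whether all $X_{\s_m}=1$ or some $X_{\s_m}=0$, uses $|Z_{\s_m}|=p_n^r$ on the latter event to peel off one factor, and applies the induction hypothesis together with $r+|\bigcup_{j\ne m}\bar\s_j|\ge a$; this yields the constant $(L+2)!$ by the recursion $1+(L+2)!(L+1)\le (L+3)!$. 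You instead expand $\prod_j(X_{\s_j}+p_n^r)$ directly into $2^L$ monomials, evaluate each as $p_n^{r(L-|S|)+|\bigcup_{j\in S}\bar\s_j|}$, and observe that the exponent is at least $a$ because the $L-|S|$ omitted tuples can contribute at most $r(L-|S|)$ new vertices to the union. Your counting inequality is exactly the fact that drives the paper's inductive step, but the direct expansion avoids the induction entirely and gives the sharper constant $2^L$ in place of $(L+2)!$. Your remark that the hidden constant in the lower bound depends on $L$ as well as $r$ is a fair observation (the paper has the same issue, absorbed because $L\le 4$ or $L\le R$ in all applications), and you are right that no connectivity hypothesis is needed here, in contrast to Lemma \ref{free-bound}.
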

\begin{proof} To prove the lower bound observe that 
$$\Ex\left|\prod_{j=1}^L{Z}_{\s_j}\right| \ge (1-p_n^{r})^L \Pr(X_{\s_j}=1\mbox{ for all } 1\leq j \leq L) =(1-p_n^{r})^Lp_n^{a},$$
from which the desired bound follows by using \eqref{eq:boundp}.
	\\
	 
	For the upper bound we use induction on $L$. For $L=1$, we have $$\Ex\left|Z_{\s_1}\right| \leq \Ex\left|X_{\s_1}\right| + p_n^{r}  \le 2p_n^{r}.$$ This proves the claim for $L=1$. Now, suppose the upper bound in \eqref{eq:moment_Z_bound} holds for $\s_1,\s_2,\ldots,\s_L \in V(G_n)_{r}$ for some value of $L$. Then consider the case when $\s_1,\s_2,\ldots,\s_{L+1} \in V(G_n)_{r}$  with $|\bigcup_{j=1}^{L+1} \bar \s_j |=a$. Note that
	\begin{align} \label{e:abs-up}
	\Ex\left|\prod_{j=1}^{L+1} {Z}_{\s_j}\right| & \le  (1-p_n^{r})^{L+1}\Pr(X_{\s_m}=1\mbox{ for all } 1 \leq m \leq L+1)+\sum_{m=1}^{L+1}\Ex\left[\left|\prod_{j=1}^{L+1} {Z}_{\s_j}\right|\ind\{X_{\s_m}=0\} \right] \nonumber \\ 
	& \le  p_n^{\left|\bigcup_{j=1}^{L+1} \bar \s_j \right|}+\sum_{m=1}^{L+1}p_n^{r}\Ex\left|\prod_{j \neq m} {Z}_{\s_j}\right|\ind\{X_{\s_m}=0\} \nonumber \\ 
	& \le  p_n^a+p_n^{r}\sum_{j=1}^{L+1}\Ex\prod_{j \neq m} \left|{Z}_{\s_j}\right| \nonumber \\ 
	& \le  p_n^a+ (L+2)! (L+1) p_n^{r + \left|\bigcup_{j \ne m} \bar \s_j \right|} ,
	\end{align}
where the last step uses the induction hypothesis. Note that 
$$r+\left|\bigcup_{j \ne m} \bar \s_j \right| \geq  \left|\bigcup_{j=1}^{L+1} \bar \s_j \right| = a.$$ Hence, the  RHS~of (\ref{e:abs-up}) can be bounded by $p_n^a[1+(L+2)! (L+1)]\le p_n^a(L+3)!$, thus verifying the result for $L+1$. This proves  the desired upper bound by induction. 
\end{proof}

\subsection{Proof of Theorem \ref{thm:wass}}
\label{sec:clt_pf}

This is a consequence of a more general result about the asymptotic normality of multilinear forms in the variables $\{X_u: u \in V(G_n)\}$, which might be of independent interest. To state this general result, we need a few definitions: Fix an integer $r \geq 1$ and consider a function $\alpha : V(G_n)_{r} \to \mathbb{R}_{\ge 0}$. Define 
$$S(\alpha, G_n):=\sum_{\s\in V(G_n)_{r}} \alpha(\s)X_{\s},$$
where $X_{\s} = \prod_{u=1}^r X_{s_u}$. Assume $\sigma(\alpha,G_n)^2:=\operatorname{Var}(S(\alpha, G_n) )>0$,  and consider the rescaled statistic $U(\alpha,G_n)$: 
\begin{align}\label{zalpha}
U(\alpha,G_n) & :=\frac{S(\alpha,G_n)-\Ex[S(\alpha,G_n)]}{\sigma(\alpha,G_n)}. 
\end{align}
The following theorem gives a quantitative error bound between $U(\alpha,G_n) $ and the standard normal distribution (in the Wasserstein distance) in terms of the expected value of 
\begin{align}\label{eq:walpha}
W(\alpha,G_n) & :=\sum_{\{\s_1,\s_2,\s_3,\s_4\}\in \mathcal{K}_{n,4}}\alpha(\s_1)\alpha(\s_2)\alpha(\s_3)\alpha(\s_4)|Z_{\s_1}Z_{\s_2}Z_{\s_3}Z_{\s_4}|.
\end{align}
where $Z_{\s}=X_{\s}-p_n^{r}$. This error bound can then be expressed in term of the fourth-moment difference $\Ex[U(\alpha,G_n)^4]-3$ for $p_n$ small enough, which shows 
$U(\alpha,G_n) \stackrel{D} \rightarrow N(0, 1)$ whenever $\Ex[U(\alpha,G_n)^4] \rightarrow 3$. 

\begin{theorem}\label{thm:wass1}  Fix an integer $1 \leq r \leq \frac{|V(G_n)|}{4}$, a network $G_n=(V(G_n),E(G_n))$, and a sampling ratio $p_n$ which satisfies \eqref{eq:boundp}. Then \begin{align}\label{e:wass1}
\operatorname{Wass}(U(\alpha, G_n), N(0,1)) \lesssim \frac{r}{(1-\kappa)^3} \sqrt{ \frac{\Ex[W(\alpha,G_n)]}{\sigma(\alpha,G_n)^4}},
\end{align}
where $U(\alpha, G_n)$ and $W(\alpha,G_n)$ are defined in \eqref{zalpha} and \eqref{eq:walpha} respectively.  Moreover,  if $p_n \in \left(0, \tfrac1{20}\right]$ then, $\frac{\Ex[W(\alpha,G_n)]}{\sigma(\alpha,G_n)^4} \lesssim  \Ex[U(\alpha,G_n)^4]-3$, and as a consequence, 
\begin{align}\label{e:UGn}
\operatorname{Wass}(U(\alpha, G_n), N(0,1)) \lesssim_\alpha \sqrt{ \Ex[U(\alpha,G_n)^4]-3 } . 
\end{align}
\end{theorem}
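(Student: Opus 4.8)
The plan is to deduce \eqref{e:wass1} from Stein's method applied to the locally dependent sum $U := U(\alpha,G_n)$, and then to upgrade it to \eqref{e:UGn} by a direct combinatorial evaluation of the fourth moment. Write $\sigma := \sigma(\alpha,G_n)$ and $Y_{\s} := \alpha(\s) Z_{\s}$, with $Z_{\s} = X_{\s} - p_n^{r}$ as in \eqref{e:notations}, so that $U = \sigma^{-1}\sum_{\s} Y_{\s}$ and $\Ex[Y_{\s}] = 0$. For $\s \in V(G_n)_{r}$ let $N_{\s} := \{\s' \in V(G_n)_{r} : \bar\s'\cap\bar\s \neq \emptyset\}$ be its dependency neighbourhood, and set $D_{\s} := \sigma^{-1}\sum_{\s'\in N_{\s}}Y_{\s'}$ and $U^{(\s)} := U - D_{\s}$. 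Then $U^{(\s)}$ is a function of $(X_u : u \notin \bar\s)$ only, hence independent of $Y_{\s}$; and since $\Ex[Z_{\s}Z_{\s'}] = 0$ whenever $\bar\s\cap\bar\s' = \emptyset$, one has the identity $\sigma^{-1}\sum_{\s}\Ex[Y_{\s}D_{\s}] = \sigma^{-2}\sum_{\s}\sum_{\s'\in N_{\s}}\Ex[Y_{\s}Y_{\s'}] = \sigma^{-2}\vr[\sum_{\s}Y_{\s}] = 1$.

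For the first part, fix a $1$-Lipschitz $h$, let $f$ solve the associated Stein equation $f'(x) - xf(x) = h(x) - \int h\,\mathrm d\gamma$ ($\gamma$ the standard Gaussian law), and recall the standard bounds $\|f'\|_{\infty} \le 1$, $\|f''\|_{\infty} \le 2$, and $\operatorname{Wass}(U,N(0,1)) = \sup_h|\Ex[f'(U) - Uf(U)]|$. Using $\Ex[Y_{\s}f(U^{(\s)})] = 0$, a first-order Taylor expansion of $f$ about $U^{(\s)}$ with increment $D_{\s}$, and then a further Taylor step replacing $f'(U^{(\s)})$ by $f'(U)$, gives
\begin{align*}
\Ex[Uf(U)] \;=\; \sigma^{-1}\sum_{\s}\Ex\big[Y_{\s}\big(f(U)-f(U^{(\s)})\big)\big] \;=\; \Ex\big[f'(U)\,V\big] + \mathrm{Err}, \qquad V:=\sigma^{-1}\sum_{\s}Y_{\s}D_{\s},
\end{align*}
where $\Ex[V]=1$ by the identity above and $|\mathrm{Err}| \lesssim \sigma^{-3}\sum_{\s}\sum_{\s',\s''\in N_{\s}}\Ex|Y_{\s}Y_{\s'}Y_{\s''}|$ (a weighted sum over ``cherries'' of tuples). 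Since $|\Ex[f'(U)(V-1)]| \le \|f'\|_{\infty}\Ex|V-1| \le \|f'\|_{\infty}\sqrt{\vr[V]}$, it suffices to show $\vr[V] \lesssim_r \Ex[W(\alpha,G_n)]/\sigma^{4}$ and $\sigma^{-3}\sum_{\s}\sum_{\s',\s''\in N_{\s}}\Ex|Y_{\s}Y_{\s'}Y_{\s''}| \lesssim_r \sqrt{\Ex[W(\alpha,G_n)]/\sigma^{4}}$. For the first estimate, expand $V$ quadratically: $\vr[V] = \sigma^{-4}\sum\operatorname{Cov}(Y_{\s_1}Y_{\s_2},Y_{\s_3}Y_{\s_4})$ over $\s_2\in N_{\s_1}$, $\s_4\in N_{\s_3}$. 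This covariance vanishes unless $\{\s_1,\s_2,\s_3,\s_4\}$ is connected in the sense of Definition \ref{eq:connected}, and on a connected quadruple $|\operatorname{Cov}(Y_{\s_1}Y_{\s_2},Y_{\s_3}Y_{\s_4})| \le \Ex|Y_{\s_1}Y_{\s_2}Y_{\s_3}Y_{\s_4}| + \Ex|Y_{\s_1}Y_{\s_2}|\,\Ex|Y_{\s_3}Y_{\s_4}|$; since connectedness forces $|\bar\s_1\cup\bar\s_2| + |\bar\s_3\cup\bar\s_4| \ge |\bar\s_1\cup\cdots\cup\bar\s_4|$, Lemma \ref{abs-bound} bounds both terms by $C_r\,\alpha(\s_1)\cdots\alpha(\s_4)\,\Ex|Z_{\s_1}Z_{\s_2}Z_{\s_3}Z_{\s_4}|$, and summing over connected quadruples gives the claim. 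The cherry estimate follows by a similar analysis, now using $|Z_{\s}| \le 1$, the elementary inequality $2|\bar\s\cup\bar\s'\cup\bar\s''| \ge |\bar\s\cup\bar\s'| + |\bar\s\cup\bar\s''|$, the Cauchy--Schwarz inequality, and again Lemma \ref{abs-bound}. Combining the two estimates yields \eqref{e:wass1}.

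For the second part, assume $p_n \le \tfrac1{20}$ and expand $\sigma^{4}\Ex[U^{4}] = \sum_{(\s_1,\s_2,\s_3,\s_4)}\alpha(\s_1)\cdots\alpha(\s_4)\,\Ex[Z_{\s_1}Z_{\s_2}Z_{\s_3}Z_{\s_4}]$, sorting the ordered quadruples by the components of $\mathcal{G}(\s_1,\s_2,\s_3,\s_4)$: those with an isolated coordinate contribute $0$; those splitting into two connected pairs contribute $3\sigma^{4} - \Sigma_{\mathrm{ov}}$, where $\Sigma_{\mathrm{ov}}$ is the same sum restricted to quadruples whose two pairs additionally share a vertex; and the connected quadruples contribute $\Sigma_{\mathrm{conn}}$. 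Hence $\Ex[U^{4}] - 3 = (\Sigma_{\mathrm{conn}} - \Sigma_{\mathrm{ov}})/\sigma^{4}$. By Lemma \ref{free-bound}, for $p_n \le \tfrac1{20}$ every connected quadruple has $\Ex[Z_{\s_1}Z_{\s_2}Z_{\s_3}Z_{\s_4}] \ge \tfrac14 p_n^{|\bar\s_1\cup\cdots\cup\bar\s_4|} > 0$, so each term of $\Sigma_{\mathrm{conn}}$ is nonnegative and, by Lemma \ref{abs-bound}, comparable to $\alpha(\s_1)\cdots\alpha(\s_4)\,\Ex|Z_{\s_1}Z_{\s_2}Z_{\s_3}Z_{\s_4}|$; thus $\Sigma_{\mathrm{conn}} \asymp_r \Ex[W(\alpha,G_n)]$. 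Moreover, every quadruple in $\Sigma_{\mathrm{ov}}$ satisfies $|\bar\s_1\cup\bar\s_2| + |\bar\s_3\cup\bar\s_4| \ge |\bar\s_1\cup\cdots\cup\bar\s_4| + 1$, so Lemma \ref{free-bound} gives $\Ex[Z_{\s_1}Z_{\s_2}]\,\Ex[Z_{\s_3}Z_{\s_4}] \le C_r\, p_n\,\Ex[Z_{\s_1}Z_{\s_2}Z_{\s_3}Z_{\s_4}]$ for each such quadruple; since these quadruples are themselves connected and all terms of $\Sigma_{\mathrm{conn}}$ are nonnegative, $\Sigma_{\mathrm{ov}} \le C_r' p_n\,\Sigma_{\mathrm{conn}}$, and the threshold $\tfrac1{20}$ is calibrated (against the constants $2^L-1$ of Lemma \ref{free-bound}) so that $C_r' p_n < 1$. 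Therefore $\Sigma_{\mathrm{conn}} - \Sigma_{\mathrm{ov}} \gtrsim_r \Sigma_{\mathrm{conn}} \gtrsim_r \Ex[W(\alpha,G_n)]$, i.e. $\Ex[W(\alpha,G_n)]/\sigma^{4} \lesssim_r \Ex[U^{4}] - 3$, and combining with \eqref{e:wass1} proves \eqref{e:UGn}.

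The main obstacle is the combinatorial bookkeeping in these two estimation steps. In the Stein step one must verify that every Taylor remainder reduces to a weighted sum over connected configurations of at most four tuples that is controlled by the single quartic quantity $\Ex[W(\alpha,G_n)]/\sigma^{4}$; the ``cherry'' remainder is the most delicate, and handling it cleanly requires combining $|Z_{\s}| \le 1$ with the right Cauchy--Schwarz splitting and the near-multiplicativity of the absolute moments supplied by Lemma \ref{abs-bound}. In the fourth-moment step the subtle point is not the identity $\Ex[U^{4}] - 3 = (\Sigma_{\mathrm{conn}} - \Sigma_{\mathrm{ov}})/\sigma^{4}$ but the need for a genuine \emph{lower} bound $\Ex[U^{4}] - 3 \gtrsim_r \Ex[W(\alpha,G_n)]/\sigma^{4}$: without an upper bound on $p_n$, cancellation among the signed moments can make $\Ex[U^{4}] - 3$ small while $\Ex[W(\alpha,G_n)]$ is not, which is precisely why the hypothesis $p_n \le \tfrac1{20}$---and with it the positivity and near-multiplicativity of the moments $\Ex[Z_{\s_1}\cdots Z_{\s_L}]$ furnished by Lemma \ref{free-bound}---is essential.
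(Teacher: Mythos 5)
Your proposal is correct and follows essentially the same route as the paper: your $\mathrm{Err}$ and $\Ex[f'(U)(V-1)]$ terms are exactly the paper's $A_1$ and $A_2$ (Lemmas \ref{lm:A1} and \ref{lm:A2}), controlled by the same cherry-sum Cauchy--Schwarz and the same variance-of-$S$ bound, and your fourth-moment step is the paper's Lemma \ref{lm:W_4}, with the connected/two-pair decomposition, the positivity of connected quartic moments from Lemma \ref{free-bound}, and the $p_n\le\tfrac1{20}$ calibration (the paper's explicit constant is the $\tfrac{15}{4}$ in \eqref{eq:claim_1}) playing identical roles. The only cosmetic differences are that you bound the covariance in $\vr[V]$ by a sum of two absolute moments rather than dropping the nonnegative product term, and you leave the cherry estimate at sketch level where the paper carries out the $(K,L)$-stratified Cauchy--Schwarz explicitly.
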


Given $H=(V(H), E(H))$, the result in Theorem \ref{thm:wass} follows from Theorem \ref{thm:wass1} above with $r=|V(H)|$ and $\alpha(\s)= \frac{1}{|Aut(H)|} \prod_{(i,j)\in E(H)}a_{s_i,s_j}$. The proof of Theorem \ref{thm:wass1} is given below. \\ 
 
\noindent {\it Proof of Theorem} \ref{thm:wass1}: Hereafter, we will drop the dependency on $\alpha$ and $G_n$ from the notations $\sigma(\alpha,G_n), U(\alpha,G_n),$ and $W(\alpha,G_n)$ and denote them by $\sigma, U,$ and $W$, respectively.  Define
 \begin{align}\label{eq:Y_I}
 Y_{\s}:=\alpha(\s)(X_{\s}-p_n^{r}) \quad \text{and} \quad U=\frac1{\sigma}\sum_{\s\in V(G_n)_{r}} Y_{\s}. \end{align}
Moreover, for $\bm s \in V(G_n)_r$ define,  
$$U_{\s}:=\frac1{\sigma}\sum_{\s'\in V(G_n)_{r}: \bar\s\cap\bar\s'=\emptyset} Y_{\s'},$$
which is the sum over all $Y_{\s'}$ such that $\bar{\s}'$ disjoint from $\bar{\s}$. 
Clearly, $Y_{\s}$ and $U_{\s}$ are independent for each $\s\in V(G_n)_{r}$. We now take a twice continuously differentiable function $f : \mathbb R \rightarrow \mathbb R$ such that $|f|_\infty \le 1, |f'|_\infty \le \sqrt{\frac2\pi}, |f''|_\infty \le 2.$ 
	Note that, because $\Ex[ Y_{\s} f(U_{\s}) ]  = \Ex[ Y_{\s}] \Ex[ f(U_{\s}) ]  = 0$, 
	\begin{align}\label{eq:defn_12}
	\Ex[Uf(U)]-\Ex[f'(U)] & =\frac1{\sigma}\sum_{\s\in V(G_n)_{r} } \Ex[Y_{\s}(f(U)-f(U_{\s}))] - \Ex[f'(U)] =A_1 +A_2,
	\end{align}
	where
	\begin{align} 
	\label{e:A1def}
	& A_1  :=  \frac1{\sigma}\sum_{\s\in V(G_n)_{r}} \Ex[Y_{\s}(f(U)-f(U_{\s})-(U-U_{\s})f'(U))] \\ \label{e:A2def}
	& A_2  :=\frac1{\sigma}\sum_{\s\in V(G_n)_{r}} \Ex[Y_{\s}(U-U_{\s})f'(U)]-\Ex[f'(U)] 
	\end{align} 
The proof is now completed in three steps: (1) $|A_1| \lesssim \frac{r}{\sigma^2(1-\kappa)^3} \sqrt{ \Ex[W] }$ (Lemma \ref{lm:A1}), (2) $|A_2|  \lesssim \frac{1}{\sigma^2} \sqrt{ \Ex[W] }$ (Lemma \ref{lm:A2}), and (3) $\Ex[W]\lesssim \sigma^4(\Ex[U^4]-3)$, for all $p_n \in (0,  \frac1{20}]$ (Lemma \ref{lm:W_4}).

\begin{lemma}\label{lm:A1} For $A_1$ as defined in \eqref{e:A1def}, $|A_1| \lesssim  \frac{r}{\sigma^2(1-\kappa)^3} \sqrt{ \Ex[W] }$. 
\end{lemma} 

\begin{proof} Using \eqref{e:A1def}, note that 
	\begin{align}\label{e:A1bound}
	|A_1| & \le \frac1{2\sigma}\sum_{\s\in V(G_n)_{r}}\Ex\left|Y_{\s}(U-U_{\s})^2\right||f''|_{\infty}  \le \frac1{\sigma^3}\sum_{\substack{\s_1, \s_2 \s_3 \in V(G_n)_{r} \\ \bar \s_1\bigcap \bar \s_2 \neq \emptyset, \bar \s_1\bigcap \bar \s_3 \neq\emptyset}} \Ex|Y_{\s_1}Y_{\s_2}Y_{\s_3}|.
	\end{align}
By Lemma \ref{abs-bound}, each term in the sum above can be bounded as follows: 
	$$\Ex|Y_{\s_1}Y_{\s_2}Y_{\s_3}| \lesssim \alpha(\s_1)\alpha(\s_2)\alpha(\s_3)p_n^{\left|\bar \s_1 \bigcup \bar \s_2 \bigcup \bar \s_3 \right|}.$$ 
Therefore, from \eqref{e:A1bound}, 
\begin{align}\label{e:A1bound_II}
	|A_1| & \lesssim \frac1{\sigma^3}\sum_{\substack{\s_1, \s_2 \s_3 \in V(G_n)_{r} \\ \bar \s_1\bigcap \bar \s_2 \neq \emptyset, \bar \s_1\bigcap \bar \s_3 \neq\emptyset}}  \alpha(\s_1)\alpha(\s_2)\alpha(\s_3)p_n^{\left|\bar \s_1 \bigcup \bar \s_2 \bigcup \bar \s_3 \right|} \nonumber \\
	& \lesssim \frac{1}{\sigma^3} \sum_{K=0}^{r-1} \sum_{L=0}^{r-1} p_n^{r + K + L}  \sum_{\substack{  \s_1,  \s_2  \in V(G_n)_{r} \\ | \bar \s_2\setminus \bar \s_1|=K}}\alpha(\s_1)\alpha(\s_2)\sum_{\substack{ \s_3\in V(G_n)_{r} \\ | \bar \s_3\setminus ( \bar \s_1\bigcup \bar \s_2 ) |=L}}\alpha( \s_3), 
	\end{align} 
using $\left|\bar \s_1 \bigcup \bar \s_2 \bigcup \bar \s_3 \right| = r + K + L$, if $ | \bar \s_2\setminus \bar \s_1|=K$ and $ | \bar \s_3\setminus ( \bar \s_1\bigcup \bar \s_2 ) |=L$.

For $0\le K,L\le r - 1$, define 
	\begin{align*}
	N_{K, L}(\alpha, G_n)  & = \sum_{\substack{\s_1, \s_2  \in V(G_n)_{r} \\ |\bar \s_2\setminus \bar \s_1|=K}}\alpha(\s_1)\alpha(\s_2)\left(\sum_{\substack{\s_3\in V(G_n)_{r} \\ |\bar \s_3\setminus (\bar \s_1\bigcup \bar \s_2)|=L}}\alpha(\s_3)\right)^2,
	\end{align*} 
and use \eqref{eq:walpha} along with Lemma \ref{abs-bound} with $L\mapsto 4$) to get
	\begin{align*}
	\Ex[W]  & \gtrsim (1-\kappa)^4\sum_{\substack{\s_1,\s_2\in V(G_n)_{r} \\ \bar \s_1\bigcap \bar \s_2\neq\emptyset}} \sum_{\substack{\s_3\in V(G_n)_{r} \\ |\bar \s_3\bigcap (\bar \s_1\cup \bar \s_2)|=L}}\sum_{\substack{\s_4\in V(G_n)_{r} \\ |\bar \s_4\bigcap (\bar \s_1\cup \bar \s_2)|=L}} p_n^{\left|\bar \s_1 \bigcup \bar \s_2 \bigcup \bar \s_3 \bigcup \bar \s_4 \right|} \alpha(\s_1)\alpha(\s_2)\alpha(\s_3)\alpha(\s_4) \\ 
	& \gtrsim (1-\kappa)^4p_n^{r+ K + 2 L}N_{K, L}(\alpha,G_n) , 
	\end{align*} 
where the last step uses the fact that $\left|\bar \s_1 \bigcup \bar \s_2 \bigcup \bar \s_3 \bigcup \bar \s_4 \right| \le r + K + 2 L$, if $|\bar \s_2\setminus \bar \s_1|=K$, $|\bar \s_3\setminus \{\bar \s_1\bigcup \bar \s_2\}|=L$, and $| \bar \s_4\setminus \{ \bar \s_1\bigcup \bar \s_2\}|=L$. Also, note that for any $K \in [0, r-1]$, by Lemma \ref{abs-bound} (with $L\mapsto 2$) we have
$$\sigma^2 \gtrsim  (1-\kappa)^2p_n^{r+K} \sum_{\substack{\s_1, \s_2  \in V(G_n)_{r} \\ |\bar \s_2\setminus \bar \s_1|=K}}\alpha(\s_1)\alpha(\s_2).$$ Thus, by Cauchy Schwarz inequality we have
	\begin{align*}
	\sigma^2\Ex[W] & \gtrsim (1-\kappa)^6p_n^{2r+2K+2L} N_{K, L}(\alpha,G_n) \sum_{\substack{\s_1, \s_2  \in V(G_n)_{r} \\ |\bar \s_2\setminus \bar \s_1|=K}}\alpha(\s_1)\alpha(\s_2)  \\ 
	& \gtrsim (1-\kappa)^6\left[p_n^{r+ K + L}\sum_{\substack{\s_1, \s_2  \in V(G_n)_{r} \\ |\bar \s_2\setminus \bar \s_1|=K}}\alpha(\s_1)\alpha(\s_2)\sum_{\substack{\s_3\in V(G_n)_{r} \\ |\bar \s_3\setminus \{\bar \s_1\bigcup \bar\s_2\}|=L}} \alpha(\s_3)\right]^2. 
	\end{align*}
Therefore, from \eqref{e:A1bound_II}, $|A_1| \lesssim \frac{r}{\sigma^2(1-\kappa)^3} \sqrt{ \Ex[W] }$, completing the proof of the lemma.  
\end{proof}

\begin{lemma}\label{lm:A2} For $A_2$ as defined in \eqref{e:A2def}, $|A_2| \lesssim \frac{1}{\sigma^2} \sqrt{\Ex[W]}$. 
\end{lemma} 

\begin{proof} 
Setting $$S=\sigma\sum_{\s\in V(G_n)_{r}} Y_{\s}(U-U_{\s})=\sum_{\substack{\s_1,\s_2\in V(G_n)_{r} \\ \bar \s_1\bigcap \bar \s_2\neq\emptyset}} Y_{\s_1}Y_{\s_2}$$ we have $\Ex[S]=\sigma^2$. Thus, recalling $|f'|_\infty \leq \sqrt{\frac{2}{\pi}}$, 
	\begin{align}\label{eq:A2_I}
	|A_2| = \left|\Ex\left[f'(U)\left(\frac{S}{\sigma^2}-1\right)\right] \right| \le |f'|_{\infty}\Ex\left|\frac{S}{\sigma^2}-1\right| \lesssim \frac1{\sigma^2}\sqrt{\vr[S]}.
	\end{align}
Now, observe that
	\begin{align}\label{eq:A2_II}
	\frac1{\sigma^4}\vr[S] & = \frac1{\sigma^4}\sum_{\substack{\s_1,\s_2\in V(G_n)_{r} \\ \bar \s_1\bigcap \bar \s_2\neq\emptyset}} \sum_{\substack{\s_3,\s_4\in V(G_n)_{r} \\ \bar \s_3 \bigcap \bar\s_4\neq\emptyset}} \operatorname{Cov}[Y_{\s_1}Y_{\s_2},Y_{\s_3}Y_{\s_4}] \nonumber \\  
	& =   \frac{1}{\sigma^4 } \underbrace{\sum_{\substack{\s_1,\s_2\in V(G_n)_{r} \\ \bar \s_1\bigcap \bar \s_2\neq\emptyset}} \sum_{\substack{\s_3,\s_4\in V(G_n)_{r} \\ \bar \s_3\bigcap \bar \s_4\neq\emptyset}}}_{\{\bar \s_1 \bigcup \bar \s_2\}\bigcap \{\bar \s_3 \bigcup \bar \s_4\}\neq \emptyset}   \operatorname{Cov}[Y_{\s_1}Y_{\s_2},Y_{\s_3}Y_{\s_4}] \nonumber \\  
& \leq  \frac{1}{\sigma^4 } \underbrace{\sum_{\substack{\s_1,\s_2\in V(G_n)_{r} \\ \bar \s_1\bigcap \bar \s_2\neq\emptyset}} \sum_{\substack{\s_3,\s_4\in V(G_n)_{r} \\ \bar \s_3\bigcap \bar \s_4\neq\emptyset}}}_{\{\bar \s_1 \bigcup \bar \s_2\}\bigcap \{\bar \s_3 \bigcup \bar \s_4\}\neq \emptyset}  \Ex[Y_{\s_1}Y_{\s_2} Y_{\s_1}Y_{\s_2}] \nonumber \\ 
& \le \tfrac{1}{\sigma^{4}} \Ex[W], 
	\end{align}
where the second equality is because $\operatorname{Cov}[Y_{\s_1}Y_{\s_2},Y_{\s_3}Y_{\s_4}]  = 0$ when $\{\bar \s_1 \bigcup \bar \s_2\} \bigcap \{\bar \s_3 \bigcup \bar\s_4\}=\emptyset$, third inequality follows from $\Ex[Y_{\s_1}Y_{\s_2}] \geq 0$, $\Ex[Y_{\s_3}Y_{\s_4}] \ge 0$ (by Lemma \ref{free-bound}), and  the last step uses the definition of $W=W(\alpha, G_n)$ in  \eqref{eq:walpha}. Combining \eqref{eq:A2_I} and \eqref{eq:A2_II} the proof of the lemma follows. 
\end{proof}

\begin{lemma}\label{lm:W_4} For $W$ as defined in \eqref{eq:walpha}, $\Ex[W] \lesssim \sigma^4(\Ex[U^4]-3)$, for all $p_n \in (0,  \frac1{20}]$.  
\end{lemma}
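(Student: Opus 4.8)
\textbf{Proof plan for Lemma \ref{lm:W_4}.}

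The goal is to show that $\Ex[W]$ is dominated, up to an $r$-dependent constant, by $\sigma^4(\Ex[U^4]-3)$ when $p_n \le \frac{1}{20}$. The natural strategy is to expand $\Ex[U^4]$ into a sum over quadruples $(\s_1,\s_2,\s_3,\s_4)$ of the mixed moments $\alpha(\s_1)\alpha(\s_2)\alpha(\s_3)\alpha(\s_4)\Ex[Z_{\s_1}Z_{\s_2}Z_{\s_3}Z_{\s_4}]$, and then organize the quadruples according to the structure of the intersection graph $\mathcal{G}(\s_1,\s_2,\s_3,\s_4)$ from Definition \ref{eq:connected}. The term $3$ that gets subtracted off should correspond exactly to the quadruples where the four tuples split into two disjoint pairs, each pair internally intersecting (there are $3$ ways to pair up four indices); for those, independence across the two pairs gives $\Ex[Z_{\s_1}Z_{\s_2}]\Ex[Z_{\s_3}Z_{\s_4}]$, and summing over all such configurations yields $3\sigma^4 + (\text{lower order})$, using $\sigma^2 = \sum_{\bar\s_1\cap\bar\s_2\ne\emptyset}\alpha(\s_1)\alpha(\s_2)\Ex[Z_{\s_1}Z_{\s_2}]$. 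So $\sigma^4(\Ex[U^4]-3)$ collects precisely the contribution of the \emph{connected} quadruples (those in $\mathcal{K}_{n,4}$) plus correction terms coming from the ``pairwise disjoint'' configurations that are not exactly $3\sigma^4$ — and $\Ex[W]$ is, by \eqref{eq:walpha}, exactly the sum over $\mathcal{K}_{n,4}$ of $\alpha(\s_1)\alpha(\s_2)\alpha(\s_3)\alpha(\s_4)\Ex|Z_{\s_1}Z_{\s_2}Z_{\s_3}Z_{\s_4}|$.

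The plan is therefore: (1) write out $\sigma^4\,\Ex[U^4] = \sum \alpha(\s_1)\cdots\alpha(\s_4)\Ex[Z_{\s_1}Z_{\s_2}Z_{\s_3}Z_{\s_4}]$ over all quadruples, discarding the (automatically zero) quadruples where some $\bar\s_i$ is disjoint from the union of the others; (2) partition the surviving quadruples into the connected ones ($\mathcal{K}_{n,4}$) and the ``$2+2$'' ones that break into two internally-connected pairs; (3) for the $2+2$ quadruples, use Lemma \ref{free-bound} ($L=2$): $\Ex[Z_{\s_i}Z_{\s_j}] = p_n^{|\bar\s_i\cup\bar\s_j|}(1-p_n^{|\bar\s_i\cap\bar\s_j|})$, which is within a multiplicative factor $(1-p_n)$ of $p_n^{|\bar\s_i\cup\bar\s_j|}$ — so these sum to $3\sigma^4$ up to a relative error controlled by $p_n$, and then use $p_n\le\frac1{20}$ to make the leftover term absorbable; (4) for the connected quadruples, bound $\Ex[Z_{\s_1}Z_{\s_2}Z_{\s_3}Z_{\s_4}]$ from below: by Lemma \ref{free-bound} it is at least $p_n^{a} - 15 p_n^{a+1} \ge p_n^a(1-15p_n) \ge \tfrac14 p_n^a$ when $p_n\le\frac1{20}$, where $a = |\bar\s_1\cup\bar\s_2\cup\bar\s_3\cup\bar\s_4|$, while by Lemma \ref{abs-bound} $\Ex|Z_{\s_1}Z_{\s_2}Z_{\s_3}Z_{\s_4}| \lesssim_r p_n^a$; hence, on $\mathcal{K}_{n,4}$, $\Ex|Z_{\s_1}Z_{\s_2}Z_{\s_3}Z_{\s_4}| \lesssim_r \Ex[Z_{\s_1}Z_{\s_2}Z_{\s_3}Z_{\s_4}]$ term by term, so $\Ex[W] \lesssim_r \sum_{\mathcal{K}_{n,4}}\alpha(\s_1)\cdots\alpha(\s_4)\Ex[Z_{\s_1}Z_{\s_2}Z_{\s_3}Z_{\s_4}]$; (5) combine: the connected part of $\sigma^4\Ex[U^4]$ equals $\sigma^4\Ex[U^4] - 3\sigma^4 + (\text{error from step 3})$, and both pieces can be bounded by $\sigma^4(\Ex[U^4]-3)$ after checking the step-3 error is itself $\lesssim \sigma^4(\Ex[U^4]-3)$ (or, more cleanly, a small fraction of the connected sum), giving the claim.

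The main obstacle I anticipate is step (5) — carefully showing that the ``$2+2$'' correction term (the difference between $\sum_{2+2}\alpha\cdots\alpha\,\Ex[Z_{\s_i}Z_{\s_j}]\Ex[Z_{\s_k}Z_{\s_\ell}]$ and $3\sigma^4$) does not spoil the inequality. The subtlety is that $\sigma^4 = \big(\sum_{\bar\s_1\cap\bar\s_2\ne\emptyset}\alpha(\s_1)\alpha(\s_2)\Ex[Z_{\s_1}Z_{\s_2}]\big)^2$ already ranges over \emph{all} pairs with $\bar\s_1\cap\bar\s_2\ne\emptyset$, so when squared it produces quadruples that may be connected (e.g. when the two pairs share a vertex) rather than genuinely $2+2$; one must account for this overcounting, and show the ``extra'' quadruples it introduces have total weight $\lesssim_r p_n \cdot(\text{something already controlled})$ so they can be swept into the connected sum. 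The bookkeeping of which intersection patterns contribute where, and verifying that every discrepancy carries at least one extra factor of $p_n$ (hence is killed by $p_n\le\frac1{20}$), is where the real work lies; the bounds from Lemmas \ref{free-bound} and \ref{abs-bound} are exactly calibrated to make each such discrepancy both nonnegative and small relative to the leading connected contribution.
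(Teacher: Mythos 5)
Your plan matches the paper's proof essentially step for step: the same decomposition of $\sigma^4\Ex[U^4]$ into connected quadruples plus the three $2{+}2$ pairings, the same use of $\Ex[U^2]^2=1$ to identify the leftover as the sum over \emph{intersecting} pairs-of-pairs, the same absorption of that overcounting into the connected sum via the extra factor of $p_n$ (the paper phrases this as $\Ex[Y_{\s_1}Y_{\s_2}Y_{\s_3}Y_{\s_4}]\ge\frac{15}{4}\Ex[Y_{\s_1}Y_{\s_2}]\Ex[Y_{\s_3}Y_{\s_4}]$, using $p_n\le\frac1{20}$), and the same termwise comparison $\Ex|Z_{\s_1}Z_{\s_2}Z_{\s_3}Z_{\s_4}|\lesssim_r p_n^a\lesssim\Ex[Z_{\s_1}Z_{\s_2}Z_{\s_3}Z_{\s_4}]$ on $\mathcal{K}_{n,4}$ from Lemmas \ref{free-bound} and \ref{abs-bound}. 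The obstacle you flag in step (5) is real but is resolved exactly as you anticipate, so the proposal is correct and takes the paper's route.
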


\begin{proof} Note that 
	\begin{align} \label{e:sig-iden}
	1=\Ex[U^2]^2=\frac1{\sigma^4}\sum_{\substack{\s_1,\s_2\in V(G_n)_{r} \\ \bar \s_1\bigcap \bar \s_2\neq\emptyset}} \Ex[Y_{\s_1}Y_{\s_2}]\sum_{\substack{\s_3,\s_4\in V(G_n)_{r} \\ \bar \s_3 \bigcap \bar\s_4\neq\emptyset}} \Ex[Y_{\s_3}Y_{\s_4}]
	\end{align}
and 	
\begin{align}\label{eq:expectation_4}
\Ex[U^4]=\frac1{\sigma^4}\sum_{\s_1,\s_2,\s_3,\s_4\in V(G_n)_{r}} \Ex[Y_{\s_1}Y_{\s_2}Y_{\s_3}Y_{\s_4}] . 
\end{align} 
Given $L \geq 1$ and a collection of $r$-tuples $\{\s_1,\ldots,\s_L\}$ from $V(G_n)_{r}$, we say that the collection is {\it weakly connected}, if 
\begin{align}\label{eq:weak_connected}
\bar \s_i \bigcap \left(\bigcup_{j \ne i} \bar \s_j \right) \ne \emptyset,\text{ for all }1 \leq i \leq L.
\end{align}
(Any collection which is connected is also weakly connected, but the converse is not necessarily true.) Note that, since the random variables $Y_{\s_1}, Y_{\s_2}, Y_{\s_3}$, and $Y_{\s_4}$ have mean zero, $\Ex[Y_{\s_1}Y_{\s_2}Y_{\s_3}Y_{\s_4}]$ is zero if $\{\s_1,\s_2,\s_3,\s_4\} $ is not weakly connected. Hence, there are only two ways in which $\Ex[Y_{\s_1}Y_{\s_2}Y_{\s_3}Y_{\s_4}]$ is non-zero: (a) $\{\s_1,\s_2,\s_3,\s_4\}$ is connected and (b) $\{\s_1,\s_2,\s_3,\s_4\}$ is weakly connected with two connected components each consisting of two $r$-tuples from $\{\s_1,\s_2,\s_3,\s_4\}$.  Since in the second case there are three ways to form the pairing, the sum in \eqref{eq:expectation_4} gives, 
\begin{align}\label{eq:claim_0}
\notag& \Ex[U^4] -3 \\ 
\notag & = \frac1{\sigma^4}\sum_{\{\s_1,\s_2,\s_3,\s_4\} \in \mathcal{K}_{n,4}} \Ex[Y_{\s_1}Y_{\s_2}Y_{\s_3}Y_{\s_4}] +\frac3{\sigma^4} \underbrace{\sum_{ \{\s_1,\s_2 \} \in \mathcal{K}_{n, 2} } \sum_{\{\s_3,\s_4 \} \in \mathcal{K}_{n, 2} }}_{\{\bar \s_1 \bigcup \bar \s_2\} \bigcap \{ \bar \s_3 \bigcup \bar \s_4\}= \emptyset} \Ex[Y_{\s_1}Y_{\s_2}Y_{\s_3}Y_{\s_4}] - 3 \\ 
\notag & = \frac1{\sigma^4}\sum_{\{\s_1,\s_2,\s_3,\s_4\} \in \mathcal{K}_{n,4}} \Ex[Y_{\s_1}Y_{\s_2}Y_{\s_3}Y_{\s_4}] +\frac3{\sigma^4} \underbrace{\sum_{ \{\s_1,\s_2 \} \in \mathcal{K}_{n, 2} } \sum_{\{\s_3,\s_4 \} \in \mathcal{K}_{n, 2} }}_{\{\bar \s_1 \bigcup \bar \s_2\} \bigcap \{ \bar \s_3 \bigcup \bar \s_4\}= \emptyset} \Ex[Y_{\s_1}Y_{\s_2}]\Ex[Y_{\s_3}Y_{\s_4}] - 3 \\
 &=\frac{1}{\sigma^4}\sum_{\{\s_1,\s_2,\s_3,\s_4\} \in \mathcal{K}_{n,4}} \Ex[Y_{\s_1}Y_{\s_2}Y_{\s_3}Y_{\s_4}] - \frac3{\sigma^4} \underbrace{\sum_{ \{\s_1,\s_2 \} \in \mathcal{K}_{n, 2} } \sum_{\{\s_3,\s_4 \} \in \mathcal{K}_{n, 2} }}_{\{\bar \s_1 \bigcup \bar \s_2\} \bigcap \{ \bar \s_3 \bigcup \bar \s_4\} \ne \emptyset} \Ex[Y_{\s_1}Y_{\s_2}]\Ex [Y_{\s_3}Y_{\s_4}]  , 
\end{align} 
where the second step follows from independence and the last step uses \eqref{e:sig-iden}. 

We now claim that \begin{align}\label{eq:claim_1}\Ex[Y_{\s_1}Y_{\s_2}Y_{\s_3}Y_{\s_4}] \ge \frac{15}{4} \Ex[Y_{\s_1}Y_{\s_2}]\Ex[Y_{\s_3}Y_{\s_4}].
\end{align} Given \eqref{eq:claim_1}, it follows from \eqref{eq:claim_0} that
\[\Ex[U^4]-3\gtrsim \frac{1}{\sigma^4}\sum_{\{\s_1,\s_2,\s_3,\s_4\} \in \mathcal{K}_{n,4}} \Ex[Y_{\s_1}Y_{\s_2}Y_{\s_3}Y_{\s_4}],\]
from which the desired bound follows on using Lemmas \ref{free-bound} and \ref{abs-bound}. Thus, it suffices to verify \eqref{eq:claim_1}. To this effect, define $Z_{\bm s}=X_{\bm s} - p_n^r$.  Now, note that for $p_n\le\frac1{20}$ we have $\frac{1-15p_n}{p_n}\ge \frac{15(1+3p_n)^2}{4}$. Then applying Lemma \ref{free-bound} 
and the inequality $\left| \bar \s_1 \bigcup \bar \s_2 \bigcup \bar \s_3 \bigcup \bar \s_4 \right| \le \left| \bar \s_1 \bigcup \bar \s_2 \right| + \left|\bar \s_3 \bigcup \bar \s_4 \right|-1$ (since $\{\bar \s_1 \bigcup \bar \s_2\} \bigcap \{ \bar \s_3 \bigcup \bar \s_4\} \ne \emptyset$) gives, 
	\begin{align*}
	\Ex[{Z}_{\s_1}{Z}_{\s_2}{Z}_{\s_3}{Z}_{\s_4}]  \ge p_n^{ \left| \bar \s_1 \bigcup \bar \s_2 \bigcup \bar \s_3 \bigcup \bar \s_4 \right| }(1-15p_n)  & \ge p_n^{\left| \bar \s_1 \bigcup \bar \s_2 \right| + \left|\bar \s_3 \bigcup \bar \s_4 \right| }\frac{1-15p_n}{p_n}  \\ 
	& \ge  \frac{15}{4}(1+3p_n)p_n^{ \left|\bar \s_1 \bigcup \bar \s_2 \right| }(1+3p_n)p_n^{ \left|\bar \s_3 \bigcup \bar \s_4 \right| }   \\ 
	& \ge \frac{15}{4}\Ex[{Z}_{\s_1}{Z}_{\s_2}]\Ex[{Z}_{\s_3}{Z}_{\s_4}] . 
	\end{align*}
Here, in the last step we used Lemma \ref{free-bound} which implies that $\Ex[{Z}_{\s_1}{Z}_{\s_2}] \leq p_n^{\left|\bar \s_1 \bigcup \bar \s_2 \right|} + 3 p_n^{\left|\bar \s_1 \bigcup \bar \s_2 \right| +1 }$, and similarly,  $\Ex[{Z}_{\s_3}{Z}_{\s_4}] \leq p_n^{\left|\bar \s_3 \bigcup \bar \s_4 \right|} + 3 p_n^{\left|\bar \s_3 \bigcup \bar \s_4 \right| +1 }$.  Also, note that $\Ex[{Z}_{\s_1}{Z}_{\s_2}] \geq 0$ and $\Ex[{Z}_{\s_3}{Z}_{\s_4}] \geq 0$ by Lemma \ref{free-bound}. Now, since $ Y_{\bm s} = \alpha(\bm s) Z_{\bm s}$ (recall \eqref{eq:Y_I}), 
	$$\Ex[{Y}_{\s_1}{Y}_{\s_2}{Y}_{\s_3}{Y}_{\s_4}] \ge \frac{15}{4}\Ex[{Y}_{\s_1}{Y}_{\s_2}] \Ex[{Y}_{\s_3}{Y}_{\s_4}],$$
thus verifying \eqref{eq:claim_1}. This completes the proof of the lemma.

\end{proof}

Lemma \ref{lm:A1} and Lemma \ref{lm:A2}, together with \eqref{eq:defn_12} gives,  
	\begin{align*}
	\Ex[Uf(U)]-\Ex[f'(U)] &  \lesssim \frac{r}{\sigma^2(1-\kappa)^3} \sqrt{ \Ex[W] },
	\end{align*}
for any twice continuously differentiable function $f : \mathbb R \rightarrow \mathbb R$ such that $|f|_\infty \le 1, |f'|_\infty \le \sqrt{\frac2\pi}, |f''|_\infty \le 2.$ Taking a supremum over $f$ in this class and using \cite[Lemma 1]{chyanotes} gives the desired conclusion in part (a). The conclusion in part (b) then follows from  Lemma \ref{lm:W_4}. \hfill $\Box$

\subsection{Proof of Proposition \ref{ppn:variance_estimation} }\label{sec:variance_estimation_pf} 

We begin with the proof of (a). For this it suffices to show that $\sigma(H,G_n)^2=o((\Ex[T(H,G_n)])^2)$. This follows from the more general bound 
\begin{align}\label{eq:gen_bound}
\sigma(H,G_n)^6 \lesssim_{H, \kappa} \Ex[W_n] (\Ex[T(H,G_n)])^2,  
\end{align}
since $\Ex[W_n]=o(\sigma(H, G_n)^4)$. For verifying \eqref{eq:gen_bound},
fixing $L\in [1, |V(H)|-1]$ we consider the following count:
\begin{align*}
N_{L}(H, G_n) 
& = \sum_{\substack{\s_1 \in V(G_n)_{|V(H)|}}}M_H(\s_1)\left(\sum_{\substack{\s_2\in V(G_n)_{|V(H)|} \\ |\bar \s_2\setminus \bar \s_1|=L}}M_H(\s_2)\right)^3.
\end{align*}
Then recalling the definition of \eqref{W_n:defn} and by Lemma \ref{abs-bound} (with $r=|V(H)|$) gives, 
\begin{align*} 
\Ex[W_n] & \gtrsim \sum_{\s_1, \s_2, \s_3, \s_4 \in \mathcal{K}_{n, 4}}  \Ex|Z_{\s_1} Z_{\s_2} Z_{\s_3} Z_{\s_4}| M_H(\s_1)M_H(\s_2)M_H(\s_3)M_H(\s_4)  \nonumber \\ 
& \gtrsim_\kappa \sum_{\s_1, \s_2, \s_3, \s_4 \in \mathcal{K}_{n, 4}}  p_n^{\left|\bar \s_1 \bigcup \bar \s_2 \bigcup \bar \s_3 \bigcup \bar \s_4 \right|} M_H(\s_1)M_H(\s_2)M_H(\s_3)M_H(\s_4)  \nonumber \\ 
& \gtrsim_\kappa p_n^{|V(H)| + 3L}N_{L}(H,G_n). 
\end{align*} 
Now, noting that $\Ex[T(H,G_n)]= \frac{p_n^{|V(H)|}}{|Aut(H)|} \sum_{\bm s \in V(G_n)_{|V(H)|}} M_H(\bm s)$ and an application of the H\"{o}lder's inequality gives, 
\begin{align*}
\Ex[W_n] (\Ex[T(H,G_n)])^2 & \gtrsim_{H,\kappa} p_n^{3|V(H)|+3L}N_{L}(H,G_n)\left[\sum_{\s\in V(G_n)_{|V(H)|}} M_H(\s)\right]^2\\
& \gtrsim_{H,\kappa}\left[p_n^{|V(H)|+L}\sum_{\substack{\s_1 \in V(G_n)_{|V(H)|}}}M_H(\s_1)\sum_{\substack{\s_2\in V(G_n)_{|V(H)|} \\ |\bar \s_2\setminus \{\bar \s_1\}|=L}}M_H(\s_2)\right]^3
\end{align*}
Hence, for each $L \in [1, |V(H)|-1]$, we have 
$$\sqrt[3]{\Ex[W_n] (\Ex[T(H,G_n)])^2}\gtrsim_{H,\kappa} p_n^{|V(H)|+L} \sum\limits_{\substack{\s_1, \s_2  \in V(G_n)_{|V(H)|} \\ |\bar \s_2\setminus \bar \s_1|=L}}M_H(\s_1)M_H(\s_2).$$ 
Summing over $L \in [1, |V(H)|-1]$ we get 
$$\sqrt[3]{\Ex[W_n] (\Ex[T(H,G_n)])^2} \gtrsim_{H,\kappa} \sigma(H,G_n)^2,$$ from which \eqref{eq:gen_bound} follows. This completes the proof of (a).

Next, note that $\Ex[\hat \sigma(H,G_n)^2]=\sigma(H,G_n)^2$ and 
{\begin{align*}
	\vr[\hat \sigma(H,G_n)^2] & \lesssim \sum_{\s_1, \s_2, \s_3, \s_4 \in \mathcal{K}_{n, 4}}  p_n^{\left|\bar \s_1 \bigcup \bar \s_2 \bigcup \bar \s_3 \bigcup \bar \s_4 \right|} M_H(\s_1)M_H(\s_2)M_H(\s_3)M_H(\s_4) \\ & \lesssim_{\kappa} \Ex[W_n]=o(\sigma(H, G_n)^4)
\end{align*}
by Lemma \ref{abs-bound} (with $r=|V(H)|$)} and the assumption that $\Ex[W_n]=o(\sigma(H, G_n)^4)$. This shows the consistency of $\hat \sigma(H,G_n)^2$ in (b). The proof of (c) is an immediate consequence of part (b) and \eqref{e:wass}.

\section{{Proofs of Results from Section \ref{sec:graphs}}}
\label{sec:graphs_pf}

In this section we will prove the results stated in Section \ref{sec:graphs}. We begin with the following key lemma that will be useful in establishing inconsistency and non-normality of the HT estimates.

\begin{lemma}\label{converse-tool} 
For a fixed connected graph $H$ the following hold:
\begin{enumerate}
\item[(a)] There exists a constant $c = c_H > 0$ (depending on $H$) such that  
$$\Pr(T(H,G_n)=0) \ge e^{-cp_n^{|V(H)|}N(H,G_n)},$$
for all $n$ large enough. 

\item[(b)]
	If $\liminf_{n\rightarrow\infty} \Pr(T(H,G_n)=0)>0$, the estimator $\hat N(H,G_n)$ is neither consistent nor asymptotically normal.
	\end{enumerate}
\end{lemma}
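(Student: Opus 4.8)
For part (a), the plan is to produce a lower bound on $\Pr(T(H,G_n)=0)$ by exhibiting a specific family of events on which no copy of $H$ is observed. Recall $T(H,G_n)=0$ precisely when $M_H(\s)X_{\s}=0$ for every $\s\in V(G_n)_{|V(H)|}$, i.e.\ every copy of $H$ in $G_n$ misses at least one unsampled vertex. I would first reduce to a minimal witnessing set: let $\mathcal{S}\subseteq V(G_n)_{|V(H)|}$ be the collection of tuples $\s$ with $M_H(\s)=1$, and observe that $\{T(H,G_n)=0\}\supseteq\{$for every $\s\in\mathcal S$ there is $v\in\bar\s$ with $X_v=0\}$. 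A clean way to lower bound this is to fix, for each copy, one distinguished vertex; more precisely, greedily build a ``hitting set'' $D\subseteq V(G_n)$ of vertices such that every copy of $H$ in $G_n$ contains a vertex of $D$, with $|D|$ controlled by the number of copies. By a standard greedy/union argument one can take $|D|\le |V(H)|\cdot N(H,G_n)$ (each copy has $|V(H)|$ vertices), but this is far too lossy; instead I would use the degree-type bound coming from the local count functions: pick vertices of high local count first. In fact the cleanest route is to use the FKG inequality as in the necessity proof of Theorem \ref{thm:consistency}: the event $E:=\{X_v=0$ for all $v$ in some fixed vertex cover $D$ of the ``copy hypergraph''$\}$ is decreasing, $\Pr(E)=\prod_{v\in D}(1-p_n)\ge (1-p_n)^{|D|}\ge e^{-c'|D|p_n}$ using $\limsup p_n<1$, and on $E$ we have $T(H,G_n)=0$. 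The key point is then to bound $|D|$: choosing $D$ to be the set of vertices contained in at least one copy of $H$, one has $\sum_{v\in D}t_H(\{v\})\le |V(H)|\,N(H,G_n)$, and since each $v\in D$ has $t_H(\{v\})\ge 1$, this gives $|D|\le |V(H)|\,N(H,G_n)$, hence $\Pr(T(H,G_n)=0)\ge e^{-c' |V(H)| N(H,G_n)p_n}$. This has the right shape but the exponent has $N(H,G_n)p_n$ rather than $N(H,G_n)p_n^{|V(H)|}$ --- the latter is smaller (better), so the statement to be proved is the \emph{stronger} bound and the naive vertex-cover argument does not suffice.

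To get the sharper exponent $p_n^{|V(H)|}N(H,G_n)$, I would instead argue via a \emph{second-moment / Paley--Zygmund-type} bound applied to the indicator count, or more directly: note $\Ex[T(H,G_n)]=p_n^{|V(H)|}N(H,G_n)$ and, when this quantity is $\gg 1$, the bound is vacuous (RHS $\to 0$), so we only need it when $p_n^{|V(H)|}N(H,G_n)=O(1)$; in that regime one expects $\Pr(T(H,G_n)=0)=\Theta(1)$. The sharp inequality in general follows from a Harris/Janson-type inequality for decreasing events: index copies of $H$ by $\s$, let $B_\s=\{X_{\s}=1\}$ be increasing events; then $\{T(H,G_n)=0\}\supseteq\bigcap_{\s:M_H(\s)=1}B_\s^c$, and by the FKG inequality applied to these (finitely many) increasing events,
\begin{align*}
\Pr\Big(\bigcap_{\s:M_H(\s)=1}B_\s^c\Big)\ \ge\ \prod_{\s:M_H(\s)=1}\Pr(B_\s^c)\ =\ \prod_{\s:M_H(\s)=1}(1-p_n^{|V(H)|}).
\end{align*}
The number of ordered tuples $\s$ with $M_H(\s)=1$ is exactly $|Aut(H)|\,N(H,G_n)$, so the product equals $(1-p_n^{|V(H)|})^{|Aut(H)|N(H,G_n)}$, and since $\limsup p_n<1$ there is $c=c_H$ with $\log(1-p_n^{|V(H)|})\ge -c\,p_n^{|V(H)|}/|Aut(H)|$ for all $n$ large; this yields $\Pr(T(H,G_n)=0)\ge e^{-c\,p_n^{|V(H)|}N(H,G_n)}$, as claimed. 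The main subtlety here is the direction of FKG: the $B_\s^c$ are decreasing events, and the FKG inequality for product measures on $\{0,1\}^{|V(G_n)|}$ gives the stated positive-correlation lower bound for decreasing events, exactly as used in the necessity proof of Theorem \ref{thm:consistency}; I would cite \cite[Chapter 2]{inequality}.

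For part (b), suppose $\liminf_n\Pr(T(H,G_n)=0)=:\gamma>0$. For inconsistency, on $\{T(H,G_n)=0\}$ we have $\hat N(H,G_n)=0$, so $\hat N(H,G_n)/N(H,G_n)=0$, and since $N(H,G_n)>0$ this means $\Pr(|\hat N(H,G_n)/N(H,G_n)-1|\ge 1/2)\ge\Pr(T(H,G_n)=0)\not\to 0$; hence $\hat N(H,G_n)/N(H,G_n)\not\stackrel{P}\to 1$. For non-normality of $Z(H,G_n)$, note that on $\{T(H,G_n)=0\}$ we have $Z(H,G_n)=\frac{0-p_n^{|V(H)|}N(H,G_n)}{\sigma(H,G_n)}=-\frac{\Ex[T(H,G_n)]}{\sigma(H,G_n)}$, a deterministic value $-m_n$ say. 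Thus $Z(H,G_n)$ puts mass at least $\gamma$ (along a subsequence) on the single point $-m_n$. If $Z(H,G_n)\stackrel{D}\to N(0,1)$ then along that subsequence the atom location $-m_n$ would have to converge (else the atom smears out and contradicts the continuity of the limit via a tightness argument), say $-m_n\to -m_\infty$; but then $\Pr(Z(H,G_n)=-m_n)\to 0$ would be forced since for any $\e>0$, $\limsup_n\Pr(|Z(H,G_n)+m_n|<\e)\le\Pr(|N(0,1)+m_\infty|\le\e)\to 0$ as $\e\to 0$ by continuity of the normal law --- contradicting the mass $\ge\gamma$ at $-m_n$. The one gap to fill carefully is ruling out $m_n\to\infty$: if $-m_n\to-\infty$ then $\Pr(Z(H,G_n)\le -m_n+1)\ge\gamma$ eventually, while $\Pr(N(0,1)\le -m_n+1)\to 0$, again contradicting convergence in distribution. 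So in all cases $Z(H,G_n)\not\stackrel{D}\to N(0,1)$, which completes part (b). The main obstacle overall is getting the correct exponent $p_n^{|V(H)|}$ in part (a), which the FKG argument on the increasing events $\{X_{\s}=1\}$ resolves cleanly.
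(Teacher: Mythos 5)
Your final argument is correct and is essentially the paper's proof: part (a) is exactly the FKG bound $\Pr(T(H,G_n)=0)\ge\prod_{\s}(1-p_n^{|V(H)|})^{M_H(\s)}=(1-p_n^{|V(H)|})^{|\aut(H)|N(H,G_n)}\ge e^{-cp_n^{|V(H)|}N(H,G_n)}$ using \eqref{eq:boundp}, and part (b) is the paper's observation that an atom of mass bounded away from zero (at the deterministic point $-\Ex[T(H,G_n)]/\sigma(H,G_n)$ after standardization) rules out both consistency and convergence to a continuous limit. The initial vertex-cover detour in (a) is a dead end, but you correctly discard it yourself before landing on the right argument.
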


\begin{proof} By the FKG inequality, 
\begin{align*}
\Pr({T}(H,G_n)=0)& =\Pr\left(M_H({\s})X_{\s}=0, \text{ for all } \s\in V(G_n)_{|V(H)|}\right) \\ & \ge \prod_{\s\in V(G_n)_{|V(H)|}}\Pr(X_{\s}=0)^{M_H(\s)} 
\\ & =(1-p_n^{|V(H)|})^{\sum_{\s\in V(G_n)_{|V(H)|}}M_H(\s)}   \ge e^{-cp_n^{|V(H)|}N(H,G_n)}
\end{align*}
where the last inequality uses $\frac{1}{|Aut (H)|}\sum_{\s\in V(G_n)_{|V(H)|}}M_H(\s)=N(H, G_n)$ and $\log (1-p_n^{V(H)}) \gtrsim_H - p_n^{|V(H)|}$ for $n$ large enough (using \eqref{eq:boundp}). This completes the proof of (a)

For (b), note that inconsistency is immediate, as on the set $T(H,G_n)=0$ (which happens with probability bounded away from $0$),  the ratio $\frac{\hat N(H,G_n)}{N(H,G_n)} = \frac{T(H,G_n)}{p_n^{|V(H)|} N(H,G_n)} =0$, which does not converge to $1$. This also implies non-normality because a random variable which takes the value $0$ with probability bounded away from $0$ cannot converge after centering and scaling to a continuous distribution. 
\end{proof}

\subsection{Proof of Proposition \ref{bdd-deg}}
\label{sec:degree_pf}  

To begin with, use \eqref{e:tvar} and the inequality $t_H(A)^2 \geq t_H(A)$ to note that
 \begin{align}\label{eq:sigma_degree} 
 \sigma(H,G_n)^2 &   \gtrsim_H p_n^{|V(H)|}\sum_{A \subseteq V(G_n): |A|=|V(H)|}t_H(A)^2 \nonumber  \\
 & \ge p_n^{|V(H)|} \sum_{A \subseteq V(G_n): |A|=|V(H)|} t_H(A) \nonumber \\
&  \gtrsim_H p_n^{|V(H)|} N(H,G_n),
 \end{align}  
where the last inequality uses the fact that 	
\begin{align*}
\sum_{A \subseteq V(G_n): |A|=|V(H)|} t_H(A)  & = \frac1{|\ah|} \sum_{\substack{A \subseteq V(G_n) \\  |A| = |V(H)|}}  \sum_{\s: \bar{\s}\supseteq A } M_H(\s)  \nonumber \\ 
& =  \frac1{|\ah|} \sum_{\s\in V(G_n)_{|V(H)|}} M_H(\s)\sum_{\substack{A \subseteq \bar{\s} \\  |A| =|V(H)|}} 1 \nonumber \\ 
& =  N(H, G_n) . 
\end{align*}

Proceeding to estimate $\Ex[W_n]$ (recall the definition of $W_n$ from \eqref{W_n:defn}) we have
	\begin{align*}
	\Ex[W_n] & \lesssim \sum_{\{\s_1,\s_2,\s_3,\s_4\}\in \mathcal{K}_{n,4}} p_n^{\left|\bar \s_1\bigcup \bar \s_2 \bigcup \bar \s_3 \bigcup \bar \s_4 \right|}M_H(\s_1)M_H(\s_2)M_H(\s_3)M_H(\s_4) \tag*{(by Lemma \ref{abs-bound})}\\ 
	& \le p_n^{|V(H)|}\sum_{\{\s_1,\s_2,\s_3,\s_4\}\in \mathcal{K}_{n,4}}M_H(\s_1)M_H(\s_2)M_H(\s_3)M_H(\s_4)	. 
	\end{align*}
Now, without loss of generality by permuting the labels $\{1,2,3,4\}$ if necessary, we can assume that $\bar \s_a\bigcap (\bigcup_{b=1}^{a-1} \bar \s_b ) \ne \emptyset$, for each $1 \leq a \leq 4$. Recall that $\Delta=\sup_{n \geq 1} \max_{v\in V(G_n)}d_v=O(1)$. Then, for each $\s_1 \in V(G_n)_{|V(H)|}$ fixed
$$ \sum_{\substack{ \s_2,\s_3,\s_4 \in V(G_n)_{|V(H)|} \\ \{\s_1,\s_2,\s_3,\s_4\}\in \mathcal{K}_{n,4}}} M_H(\s_2)M_H(\s_3)M_H(\s_4) \lesssim_{\Delta, H} 1,$$
since the assumption $\bar \s_2 \bigcap \bar \s_1 \ne \emptyset$ gives at most $\Delta^{|V(H)|}$ choices for $\bm s_2$, and similarly for $\s_3$ and $\s_4$, as well. Hence, 	
\begin{align*}
\Ex[W_n] \lesssim_{H, \Delta} p_n^{|V(H)|}\sum_{\s_1 \in V(G_n)_{V(H)}}M_H(\s_1) \lesssim_{H} p_n^{|V(H)|}N(H,G_n).
\end{align*}
Combining the above with \eqref{eq:sigma_degree}, \eqref{e:wass}, and using the assumption $p_n^{|V(H)|}N(H,G_n) \gg 1$ we get,
	\begin{align*}
\mathrm{Wass}(Z(H, G_n), N(0, 1)) \lesssim_H \sqrt{\frac{\Ex[W_n]}{\sigma(H,G_n)^4}} \lesssim_{\Delta,H} \sqrt{\frac{1}{p_n^{|V(H)|}N(H,G_n)}} \to 0. 
	\end{align*} 
This shows the asymptotic normality of $Z(H, G_n)$ whenever $p_n^{|V(H)|}N(H,G_n) \gg 1$. The consistency of the HT estimator $\hat N(H, G_n)$ also follows from Proposition \ref{ppn:variance_estimation} , completing the proof of (a). 

The result in (b) is an immediate consequence of the assumption $p_n^{|V(H)|}N(H,G_n) = O(1)$ and Lemma \ref{converse-tool}.  \qed

\subsection{Proof of Theorem \ref{thm:er-consistency} }
\label{sec:er_pf}

As in the statement of theorem, suppose $G_n \sim \mathcal{G}(n, q_n)$ be a realization of the Erd\H os-R\'enyi random graph. With $W_n$ as defined in \eqref{W_n:defn}, using Theorem \ref{thm:wass} and Proposition \ref{ppn:variance_estimation} , to prove (a) it suffices to show that $\sigma(H,G_n)^{-4}\Ex[W_n|G_n]=O_P((np_n q_n^{m(H)})^{-1} )$. To this effect, using Lemma \ref{abs-bound} gives
	\begin{align}\label{eq:random_graph_Gn}
	\Ex[W_n|G_n] \lesssim_H \sum_{\{\s_1,\s_2,\s_3,\s_4\}\in \mathcal{K}_{n,4}}  M_H(\s_1)M_H(\s_2)M_H(\s_3)M_H(\s_4)p_n^{\left|\bigcup_{r=1}^4\bar{\bm s}_{r} \right|}.
	\end{align} 
	Taking expectation over the randomness of the Erd\H{o}s-R\'enyi random graph gives 
	\begin{align}\label{eq:er-1}
	\Ex[M_H(\s_1)M_H(\s_2)M_H(\s_3)M_H(\s_4)] = q_n^{|E(G_n\left([\bigcup_{r=1}^4\bar{\bm s}_{r}]\right)|},
	\end{align}
	where $G_n([\bigcup_{r=1}^4\bar{\bm s}_{r}])$ is the subgraph of $G_n$ induced by the vertices in $\bigcup_{r=1}^4\bar{\bm s}_{r}$. This gives
	\begin{align}\label{annealed}
	\Ex[W_n] & =\Ex[\Ex[W_n|G_n]] \nonumber \\ 
	& \lesssim_H  \sum_{H_1, H_2, H_3, H_4 \in \mathcal H_n} p_n^{|V(\bigcup_{a=1}^4 H_a)|} q_n^{|E(\bigcup_{a=1}^4 H_a)|} \bm 1 \left\{\bigcup_{a=1}^4 H_a \text{ is connected} \right \} ,
	\end{align} 
where $\mathcal{H}_n$ denotes the collection of all labelled sub-graphs of $K_{n}$ which are isomorphic to $H$.\footnote{For any two simple graphs $F_1=(V(F_1), E(F_1))$ and $F_2= (V(F_2), E(F_2))$, define $F_1 \bigcup F_2 = (V(F_1) \bigcup V(F_2), E(F_1) \bigcup E(F_2))$ and  $F_1 \bigcap F_2 = (V(F_1) \bigcap V(F_2), E(F_1) \bigcap E(F_2))$.} 
For $r \geq 1$ fixed, define 
\begin{align}\label{eq:Nr}
\mathcal N_n(r):=\sum_{H_1, \ldots, H_r \in \mathcal{H}_n }p_n^{|V(\bigcup_{a=1}^r H_a)|}q_n^{|E(\bigcup_{a=1}^r H_a)|} \bm 1 \left\{\bigcup_{a=1}^r H_a \text{ is connected} \right \}. 
\end{align}
The following result gives an estimate of $\mathcal N_n(r)$. 

\begin{lemma}\label{lm:graph_B} For every integer $r \geq 2$ we have,    
\begin{align}\label{eq:er_normality}
\mathcal N_n(r) \lesssim_{H, r} \mathcal N_n(r-1) \sqrt{\mathcal N_n(2)}\left(np_n q_n^{m(H)}\right)^{-\frac12},
\end{align}
whenever $np_n q_n^{m(H)} \geq 1  $. 
\end{lemma}	 
	 
The proof of this lemma is given below in Appendix \ref{sec:pf_N_count}. We first use it to complete the proof of Theorem \ref{thm:er-consistency} (a). For this, using \eqref{eq:er_normality} twice gives
	\begin{align}\label{lim}
	\notag \mathcal N_n(4) & \lesssim_H \mathcal N_n(3)\sqrt{\mathcal N_n(2)} \left(np_n q_n^{m(H)} \right)^{-\frac{1}{2}} \nonumber \\ 
	& \lesssim_H \left[\mathcal N_n(2)^{\frac{3}{2}} \left(np_n q_n^{m(H)}\right)^{-\frac{1}{2}}\right]\left[\sqrt{\mathcal N_n(2)}\left(np_n q_n^{m(H)}\right)^{-\frac{1}{2}}\right] \nonumber \\
	& = \mathcal N_n(2)^2 \left(np_n q_n^{m(H)}\right)^{-1}.
	\end{align} 
Also, using Lemma \ref{free-bound} gives, 
	\begin{align}\label{eq:HGn_12}
	\sigma(H,G_n)^2 & = \sum_{\substack{\s_1,\s_2\in V(G_n)_{|V(H)|} \\ \bar \s_1\bigcap \bar \s_2\neq\emptyset}} M_H(\s_1)M_H(\s_2)\Ex[Y_{\s_1} Y_{\s_2}]  \\ 
	 & \gtrsim_H \sum_{\substack{\s_1,\s_2\in V(G_n)_{|V(H)|} \\ \bar \s_1\bigcap \bar \s_2\neq\emptyset}} M_H(\s_1)M_H(\s_2)p^{|\bar{\s}_1 \bigcup \bar{\s}_2|} \nonumber ,
	\end{align}
	and so taking an expectation over the randomness of the Erd\H{o}s-R\'enyi graph gives
		\begin{align}\label{expec}
	\Ex[\sigma(H,G_n)^2] \gtrsim_H  \sum_{H_1, H_2 \in \mathcal{H}_n }p_n^{|V(H_1 \bigcup H_2)|}q_n^{|E(H_1 \bigcup H_2)|} \bm 1 \left\{ H_1 \bigcup H_2 \text{ is connected} \right \}  = \mathcal N_n(2).
	\end{align}
Moreover, a direct expansion gives 
	\begin{align}\label{eq:er-2}
\notag & \operatorname{Var} [\sigma(H,G_n)^2] \\  
\notag & = \sum_{\substack{\s_1,\s_2\in V(G_n)_{|V(H)|} \\ \bar \s_1\bigcap \bar \s_2\neq\emptyset}}\sum_{\substack{\s_3,\s_4\in V(G_n)_{|V(H)|} \\ \bar \s_3\bigcap \bar \s_4\neq\emptyset}} \Ex[Y_{\s_1} Y_{\s_2}] \Ex[Y_{\s_3} Y_{\s_4}] \operatorname{Cov}[M_H(\s_1)M_H(\s_2),M_H(\s_3)M_H(\s_4) ] \\
	\notag & \lesssim_H \underbrace{\sum_{\substack{\s_1,\s_2\in V(G_n)_{|V(H)|} \\ \bar \s_1\bigcap \bar \s_2\neq\emptyset}} \sum_{\substack{\s_3,\s_4\in V(G_n)_{|V(H)|} \\ \bar \s_3\bigcap \bar \s_4\neq\emptyset}}}_{\{\bar \s_1 \bigcup \bar \s_2\}\bigcap \{ \bar \s_3 \bigcup \bar \s_4\}\neq \emptyset} p_n^{|\s_1\bigcup \s_2\bigcup \s_3\bigcup \s_4|}\Ex[M_H(\s_1)M_H(\s_2)M_H(\s_3)M_H(\s_4)] \\
 & \lesssim_H  \sum_{H_1, H_2, H_3, H_4 \in \mathcal H_n} p_n^{|V(\bigcup_{a=1}^4 H_a)|} q_n^{|E(\bigcup_{a=1}^4 H_a)|} \bm 1 \left\{\bigcup_{a=1}^4 H_a \text{ is connected} \right \} =\mathcal N_n(4), 
	\end{align} 
where the inequality in the third line uses Lemma \ref{free-bound} to get 
$$\Ex[Y_{\s_1}Y_{\s_2}]\Ex[Y_{\s_3} Y_{\s_4}] \lesssim_H \Ex[Z_{\s_1}Z_{\s_2}] \Ex[Z_{\s_3}Z_{\s_4}] \lesssim p_n^{|\bar \s_1\bigcup \bar \s_2|+|\bar \s_3 \bigcup \bar \s_4|} \le p^{|\bigcup_{a=1}^4 \bar \s_a|}.$$  Thus, using \eqref{lim}, \eqref{expec}, and \eqref{eq:er-2}, we have
	\begin{align}\label{eq:sigma_con}
	\frac{\operatorname{Var}[\sigma(H,G_n)^2]}{(\Ex[\sigma(H,G_n)^2])^2} \lesssim_H \frac{\mathcal N_n(4)}{\mathcal N_n(2)^2} \to 0, 
	\end{align}
which implies $\frac{\sigma(H,G_n)^2}{\Ex\sigma(H, G_n)^2}\stackrel{P}{\rightarrow}1$. Combining the estimates in \eqref{annealed}, \eqref{expec} and \eqref{eq:sigma_con} we 
	\[ \frac{\Ex[W_n|G_n]}{\sigma (H,G_n)^4}=O_P\left(\frac{\Ex[W_n|G_n] }{(\Ex[\sigma(H,G_n)^2])^2}\right)=O_P\left(\frac{\mathcal N_n(4)}{\mathcal N_n(2)^2}\right)=O_P\left((np_n q_n^{m(H)})^{-1}\right)\]
where the last bound uses \eqref{lim}. This completes the proof of Theorem \ref{thm:er-consistency} (a). 

Next, we prove (b). For this, let $H_1$ be the subgraph for which $m(H)=\frac{|E(H_1)|}{|V(H_1)|}$. Then by Lemma \ref{converse-tool} (a), 
\begin{align}\label{eq:H1}   
\Pr(T(H,G_n)=0|G_n)\ge \Pr(T(H_1,G_n)=0|G_n)\ge e^{-cp_n^{|V(H_1)|}N(H_1,G_n)}. 
\end{align}
Therefore, by Lemma \ref{converse-tool} (b) it suffices to show that $p_n^{|V(H_1)|} N(H_1,G_n)=O_P(1)$. This follows on noting that 
\begin{align*}
 p_n^{|V(H_1)|} \Ex[N(H_1,G_n)] \le p_n^{|V(H_1)|} n^{|V(H_1)|} q_n^{|E(H_1)|}=\Big(np_n q_n^{m(H)}\Big)^{|V(H_1)|} = O(1). 
\end{align*}

\subsubsection{Proof of Lemma \ref{lm:graph_B}}
\label{sec:pf_N_count}

Note that any collection $H_1, \ldots, H_r \in \mathcal{H}_n$ with $\bigcup_{a=1}^r H_a$ connected, can be ordered in such a way that the labeled graph $\Gamma_b:=\bigcup_{a=1}^b H_a$ is connected for $1\le b \le r$. Now, setting $F:=\Gamma_{r-1}\cap H_r$ we have
\[ |V(\Gamma_r)|=|V(\Gamma_{r-1})|+|V(H)|-|V(F)| \text{ and } 	|E(\Gamma_r)|= |E(\Gamma_{r-1})|+|E(H)|-|E(F)| .\] 
This gives the bound
		\begin{align}\label{recursion_I}
\mathcal N_n(r)& =\sum_{H_1, \ldots, H_r \in \mathcal{H}_n} p_n^{|V(\Gamma_r)|} q_n^{|E(\Gamma_r)|} \bm 1\{ \Gamma_r \text{ is connected}\} \nonumber \\
		& \lesssim_r \sum_{H_1, \ldots, H_r \in \mathcal{H}_n} p_n^{|V(\Gamma_r)|} q^{|E(\Gamma_r)|} 1\{\Gamma_b \text{ is connected for all } 1\le b \le r  \}   
		\nonumber \\
&= \sum_{H_1, \ldots, H_{r-1} \in \mathcal{H}_n}  p_n^{|V(\Gamma_{r-1})|} q_n^{|E(\Gamma_{r-1})|}   1\{\Gamma_b \text{ is connected for all } 1\le b \le r - 1  \}   Q_n(r),  
\end{align}
where 
\begin{align*}
Q_n(r) & := \sum_{F\subseteq H: F \ne \emptyset}   \sum_{\substack{H_r\in \mathcal{H}_n \\ \Gamma_{r-1}  \cap H_r  \simeq F}} p_n^{|V(H)|-|V(F)|}q_n^{|E(H)|-|E(F)|} \nonumber \\ 
& \leq  \sum_{F\subseteq H: F \ne \emptyset}  (np_n)^{|V(H)|-|V(F)|}q_n^{|E(H)|-|E(F)|}  \nonumber \\  
& \lesssim_H  \max_{F \subseteq H: F \ne \emptyset} \left\{ (np_n)^{|V(H)|-|V(F)|}q_n^{|E(H)|-|E(F)|} \right \} . 
\end{align*}
Using this inequality in \eqref{recursion_I} gives, 
		\begin{align}\label{recursion}
\mathcal N_n(r) & \lesssim_{H, r}  \max_{F \subseteq H: F \ne \emptyset} \left\{ (np_n)^{|V(H)|-|V(F)|}q_n^{|E(H)|-|E(F)|} \right \}  \mathcal N_n(r-1).
\end{align} 
		
		Also, for $H_1,H_2\in \mathcal{H}_n$, $|V(H_1 \bigcup H_2)|=2|V(H)|-|V(H_1 \bigcap H_2)|$ and $|E(H_1\bigcup H_2)|=2|E(H)|-|E(H_1\bigcap H_2)|$. Hence, 
		\begin{align*}
		\mathcal N_n(2) & = \sum_{H_1, H_2  \in \mathcal{H}_n} p_n^{2|V(H)|-|V(H_1\cap H_2)|} q_n^{2|E(H)|-|E(H_1\cap H_2)|} 1\left\{H_1 \bigcup H_2 \text{ is connected} \right \} \\ 
		& = \sum_{F \subseteq H: F \ne \emptyset} \sum_{\substack{H_1,H_2 \in \mathcal{H}_2 \\ H_1 \bigcap H_2 \simeq F}} p_n^{2|V(H)|-|V(F)|}q_n^{2|E(H)|-|E(F)|}
		\end{align*}
Now, since for inner sum there are $\gtrsim_H  n^{2|V(H)|-|V(F)|}$ choices for the vertices, we get
		\begin{align*}
	\mathcal N_n(2) \gtrsim_H  \max_{F \subseteq H: F \ne \emptyset}(n p_n)^{2|V(H)|-|V(F)|} q_n^{2|E(H)|-|E(F)|} . 
		\end{align*} 
Using this inequality on the RHS~of \eqref{recursion} we get that 
		\begin{align*}
		\mathcal N_n(r)& \lesssim_{H, r} \frac{ \mathcal N_n(r-1) \max_{F \subseteq H: F \ne \emptyset} \left\{ (np_n)^{|V(H)|-|V(F)|} q_n^{|E(H)|-|E(F)|} \right \}  \sqrt{\mathcal N_n(2)}}{\max\limits_{F \subseteq H: F \ne \emptyset}\left\{(np_n)^{|V(H)|- \frac{|V(F)|}{2}} q_n^{|E(H)|- \frac{|E(F)|}{2}}\right\}}  \\
		& \leq \mathcal N_n(r-1)\sqrt{\mathcal N_n(2)} \max_{F \subseteq H: F \ne \emptyset} \left\{ \frac{(np_n)^{|V(H)|-|V(F)|} q_n^{|E(H)|-|E(F)|}}{(np_n)^{|V(H)|- \frac{|V(F)|}{2}} q_n^{|E(H)|- \frac{|E(F)|}{2}}}\right\} \\
		& = \mathcal N_n(r-1)\sqrt{\mathcal N_n(2)} \max_{F \subseteq H: F \ne \emptyset} \left\{(np_n)^{-\frac{|V(F)|}{2}}q_n^{-\frac{|E(F)|}{2}} \right\} \\ & = \mathcal N_n(r-1)\sqrt{\mathcal N_n(2)}\max_{F \subseteq H: F \ne \emptyset} \Big\{np_n q_n^{\frac{|E(F)|}{|V(F)|}}\Big\}^{-\frac{|V(F)|}2} \\ 
		& \le \mathcal N_n(r-1)\sqrt{\mathcal N_n(2)}\left(np_n q_n^{m(H)}\right)^{-\frac12}, 
		\end{align*}
		where the last step uses $np_n q_n^{m(H)} \geq 1  $. This completes the proof of Lemma \ref{lm:graph_B}.

\subsection{Annealed CLT and Proof of Corollary \ref{cor:ann-er-consistency}}
\label{sec:annealed}

 In this section we discuss general conditions for obtaining annealed central limit theorems of $\hat N(H, G_n)$ in random graph models. We then use this result to prove Corollary \ref{cor:ann-er-consistency}. We begin by recalling the definitions of the rescaled statistics $\mathcal A(H, G_n)$, $Z(H, G_n)$, and $\mathcal E(H, G_n)$ from \eqref{eq:def1} and \eqref{eq:ZE}, respectively.

\begin{lemma}\label{lem:aez}  Let $\{G_n\}_{n \geq 1}$ be a sequence of random graphs such that the following hold: 
	\begin{itemize}
	\item[(a)] Conditional on the graph sequence $\{G_n\}_{n \geq 1}$, $Z(H,G_n) \stackrel{D} \rightarrow N(0,1)$, 
	\item[(b)] $\mathcal E(H, G_n) \stackrel{D} \rightarrow N(0,1)$, and
	\item[(c)]  $\frac{\vr_{G_n}[\hat{N}(H,G_n)]}{\Ex[\vr_{G_n}[\hat{N}(H,G_n)]]} \stackrel{P} \rightarrow 1$. 
	\end{itemize}
Then  $\mathcal A(H, G_n) \stackrel{D} \rightarrow N(0,1)$.
\end{lemma} 

\begin{proof} Define
	\begin{align}\label{def3}
		X(H,G_n):=\frac{\hat{N}(H,G_n)- N(H,G_n)}{\sqrt{\Ex[\vr_{G_n}[\hat{N}(H,G_n)]]}}.
	\end{align}
	Combining assumptions (a) and (c) of Lemma \ref{lem:aez}, we have, for any $M>0$, as $n\to \infty$,  	\begin{align}\label{comp}
		\sup_{t\in [-M,M]} \Ex\left|\Ex\left[e^{it X(H,G_n)}\big| G_n\right]-e^{-\frac{t^2}{2}}\right| \to 0.
	\end{align}
	Note that $\vr[\hat{N}(H,G_n)]=\Ex[\vr_{G_n}[\hat{N}(H,G_n)]]+\vr[N(H,G_n)]$. Thus setting
	\begin{align*}
		\alpha_n:=\frac{\Ex[\vr_{G_n}[\hat{N}(H,G_n)]]}{\vr[\hat{N}(H,G_n)]} \in [0,1], \mbox{ we have } 1-\alpha_n:=\frac{\vr[N(H,G_n)]}{\vr[\hat{N}(H,G_n)]}.
	\end{align*} 
	Using the above definition and recalling \eqref{eq:def1}, \eqref{eq:ZE}, and \eqref{def3}  we can write 
	$$\mathcal A(H, G_n)=\sqrt{\alpha_n}X(H,G_n)+\sqrt{1-\alpha_n}\mathcal E(H, G_n).$$ 
Now, let $Z \sim N(0,1)$ independent of $\{\mathcal E(H, G_n)\}_{n\ge 1}$. Fix $t\in \R$ and note that
\begin{align}\label{eq:def1_T12}
	\left|\Ex \left[e^{it\mathcal A(H, G_n)}\right]-e^{-\frac{t^2}{2}}\right| \le T_1 + T_2 ,
\end{align}
where 	
\begin{align*}
T_1& :=\left|\Ex \left[e^{it\sqrt{1-\alpha_n}\mathcal E (H, G_n)} \left\{e^{it\sqrt{\alpha_n}X(H,G_n)}-e^{it\sqrt{\alpha_n} Z}\right\} \right]\right|, \nonumber \\
T_2 & :=\left|\Ex \left[e^{it\sqrt{\alpha_n} Z+it\sqrt{1-\alpha_n}\mathcal E(H, G_n)} -e^{-\frac{t^2}{2}}\right]\right|
\end{align*}
Note that 
$$T_1 \le \Ex\left|\left[ e^{it\sqrt{\alpha_n}X(H,G_n)}\big| G_n\right]-e^{-\frac{\alpha_n t^2}{2} }\right| \rightarrow 0, $$
by \eqref{comp}. Also, 
$$T_2  \leq e^{-\frac{\alpha_n t^2}{2} } \left|\Ex \left[e^{it\sqrt{1-\alpha_n}\mathcal E(H, G_n)} \right]- e^{-\frac{(1-\alpha_n) t^2}{2} } \right| \rightarrow 0,$$ 
by assumption (b) of Lemma \ref{lem:aez}. Hence, by \eqref{eq:def1_T12}, $\Ex[e^{it\mathcal A(H, G_n)}]\to e^{-\frac{t^2}{2}}$, that is, $\mathcal A(H, G_n) \stackrel{D} \rightarrow N(0,1)$. 

\end{proof}


\subsubsection{Proof of Corollary \ref{cor:ann-er-consistency}}

To begin with suppose, $np_nq_n^{m(H)}\to \infty$. By Theorem \ref{thm:er-consistency}~(a), $\hat{N}(H,G_n)$ is consistent for $N(H,G_n)$ conditionally. Hence, to show $\hat{N}(H,G_n)$ is consistent for $\Ex[N(H,G_n)]$ unconditionally, it suffices to show that $N(H,G_n)$ is consistent for $\Ex[N(H,G_n)]$. Towards this by \cite[Lemma 3.5 and Lemma 3.6]{janson2011book} we have,  
\begin{align}
\frac{\vr[N(H, G_n)]}{\Ex[N(H, G_n)]^2} \lesssim \sum_{H_1 \subseteq H: |E(H_1)| > 0} \frac{1}{n^{|V(H_1)|} q_n^{|E(H_1)|} } \rightarrow 0, 
\end{align} 
since $nq_n^{m(H)} \geq np_nq_n^{m(H)}\to \infty$, by assumption. 
This shows $N(H,G_n)$ is consistent for $\Ex[N(H,G_n)]$ using Chebyshev's inequality, and hence, $\hat{N}(H,G_n)$ is consistent for $\Ex[N(H,G_n)]$. For the asymptotic normality note that in this case, $Z(H,G_n) \stackrel{D}{\to} N(0, 1)$ given the graph sequence $\{G_n\}_{n \geq 1}$ by Theorem \ref{thm:er-consistency}~(a). Next, note that $np_nq_n^{m(H)}\to \infty$ implies $nq_n^{m(H)}\to \infty$ and hence, by \cite[Theorem 2]{ruz} it follows that $\mathcal E(H,G_n) \stackrel{D}{\to} N(0,1)$. Moreover, 
\eqref{eq:sigma_con} gives, $$\frac{\vr_{G_n}[\hat{N}(H,G_n)]}{\Ex[\vr_{G_n}[\hat{N}(H,G_n)]]} \stackrel{P} \rightarrow 1.$$ Hence, by Lemma \ref{lem:aez} the result in Corollary \ref{cor:ann-er-consistency} (a) follows.

Next, suppose $np_nq_n^{m(H)}=O(1)$. Let $H_1$ be the subgraph for which $m(H)=\frac{|E(H_1)|}{|V(H_1)|}$. Then by Lemma \ref{converse-tool} (a) and Jensen's inequality, 
\begin{align*}  
	\Pr(T(H,G_n)=0)\ge \Pr(T(H_1,G_n)=0)\ge \Ex[e^{-cp_n^{|V(H_1)|}N(H_1,G_n)}] \ge \exp\left(-cp_n^{|V(H_1)}\Ex[N(H_1,G_n)]\right). 
\end{align*}
Therefore, by Lemma \ref{converse-tool} (b) it suffices to show that $p_n^{|V(H_1)|} \Ex[N(H_1,G_n)]=O(1)$. This follows on noting that 
\begin{align*}
	p_n^{|V(H_1)|} \Ex[N(H_1,G_n)] \le p_n^{|V(H_1)|} n^{|V(H_1)|} q_n^{|E(H_1)|}=\Big(np_n q_n^{m(H)}\Big)^{|V(H_1)|} = O(1). 
\end{align*}
This completes the proof of Corollary \ref{cor:ann-er-consistency} (b).

\subsection{Proof of Corollary \ref{regulargraph}} 
\label{sec:randomregular_pf}

Throughout this proof we will assume that $G_n \sim \mathcal G_{n, d}$ is the random graph $d$-regular graph on $n$ vertices, where $1 \leq d \leq n-1$. Also, recall that $q_n= d/n$.  \\

\noindent {\it Case } 1: We begin with case $1 \ll d \ll n$. Then using  \cite[lemma 2.1]{KIM20071961}, for any $\s_1, \s_2, \s_3, \s_4 \in V(G_n)_{|V(H)|}$, 
$$\Ex[M_H(\s_1)M_H(\s_2)M_H(\s_3)M_H(\s_4)]= (1+o(1))q_n^{|E(\mathcal{G}(\s_1, \s_2, \s_3, \s_4))|}.$$ 
Hence, by  \eqref{eq:random_graph_Gn}, 
\begin{align}\label{annealed_II}
	\Ex[W_n] & =\Ex[\Ex[W_n|G_n]] \nonumber \\ 
	& \lesssim_H  (1+o(1)) \sum_{H_1, H_2, H_3, H_4 \in \mathcal H_n} p_n^{|V(\bigcup_{a=1}^4 H_a)|} q_n^{|E(\bigcup_{a=1}^4 H_a)|} \bm 1 \left\{\bigcup_{a=1}^4 H_a \text{ is connected} \right \} \nonumber \\ 
 & = (1+o(1)) \mathcal N_n(4),
	\end{align} 
where $\mathcal N_n(r)$ is as defined in \eqref{eq:Nr}. Similarly, by \cite[Lemma 2.1]{KIM20071961}, the estimate in \eqref{expec} continue to hold with an extra $1+o(1)$: 
\begin{align}\label{expec_II} 
\Ex[\sigma(H,G_n)^2] \gtrsim_H  (1+o(1)) \mathcal N_n(2).
	\end{align}
Next, consider tuples $\s_1,\s_2,\s_3,\s_4 \in V(G_n)_{|V(H)|}$ such that $(\bar \s_1 \bigcup \bar \s_2)\bigcap (\bar \s_3\bigcup \bar \s_4)=\emptyset$. Then by  \cite[Lemma 2.1]{KIM20071961}, 
\begin{align}\label{eq:12_34}
\operatorname{Cov}\Big[M_H(\s_1)M_H(\s_2),M_H(\s_3)M_H(\s_4)\Big] 
& =o(1) \Ex[M_H(\s_1)M_H(\s_2)]\Ex[M_H(\s_1)M_H(\s_2)]. 
\end{align}
This implies, 
\begin{align}\label{eq:er-2}
& \underbrace{\sum_{\substack{\s_1,\s_2\in V(G_n)_{|V(H)|} \\ \bar \s_1\bigcap \bar \s_2\neq\emptyset}} \sum_{\substack{\s_3,\s_4\in V(G_n)_{|V(H)|} \\ \bar \s_3\bigcap \bar \s_4\neq\emptyset}}}_{\{\bar \s_1 \bigcup \bar \s_2\}\bigcap \{ \bar \s_3 \bigcup \bar \s_4\} = \emptyset} \Ex[Y_{\s_1} Y_{\s_2}] \Ex[Y_{\s_3} Y_{\s_4}] \mathrm{Cov}[M_H(\s_1)M_H(\s_2), M_H(\s_3)M_H(\s_4)] \nonumber \\ 
& = o((\Ex[\sigma(H, G_n)^2])^2),
\end{align}
where the last step uses \eqref{eq:12_34} and \eqref{eq:HGn_12}. Combining this with \eqref{eq:er-2} (which holds with $(1+o(1))$ factor), gives $\mathrm{Var}[\sigma(H, G_n)^2] = (1+o(1)) \mathcal N_n(4) +  o((\Ex[\sigma(H, G_n)^2])^2)$. Using this with \eqref{expec_II} and \eqref{lim} we have $\operatorname{Var}[\sigma(H,G_n)^2] =o((\Ex[\sigma(H,G_n)^2])^2)$, which implies $\frac{\sigma(H,G_n)^2}{\Ex\sigma(H, G_n)^2}\stackrel{P}{\rightarrow}1$. Now, combining the estimates in \eqref{annealed_II} and \eqref{expec_II} gives  
	\[ \frac{\Ex[W_n|G_n]}{\sigma (H,G_n)^4}=O_P\left(\frac{\Ex[W_n|G_n] }{(\Ex[\sigma(H,G_n)^2])^2}\right)=O_p\left(\frac{\mathcal N_n(4)}{\mathcal N_n(2)^2}\right)=O_p\left((np_n q_n^{m(H)})^{-1}\right)\]
where the last bound uses \eqref{lim}. This completes the proof of consistency and asymptotic normality above the threshold when $1 \ll d \ll n$.

For below the threshold, that is, $np_nq_n^{m(H)}=O(1)$, using Lemma \ref{converse-tool} (a) as in the proof of Theorem \ref{thm:er-consistency} it suffices to show that  $p_n^{|V(H_1)|} \Ex[N(H_1,G_n)]=O(1)$, where $H_1$ is the subgraph of $H$ such that $m(H) = \frac{|E(H_1)|}{|V(H_1)|}$. For this using \cite[Corollary 2.2]{KIM20071961} gives $$\Ex[N(H_1,G_n)] = (1+o(1)) n^{|V(H_1)|} q_n^{|E(H_1)|}.$$ This implies, 
\begin{align*}
 p_n^{|V(H_1)|} \Ex[N(H_1,G_n)] \le (1+o(1)) p_n^{|V(H_1)|} n^{|V(H_1)|} q_n^{|E(H_1)|}=\Big(np_n q_n^{m(H)}\Big)^{|V(H_1)|} = O(1). 
\end{align*}
This completes the proof of Corollary \ref{regulargraph} when $1 \ll d \ll n$. \\ 

\noindent {\it Case } 2: Next, suppose $d=\Theta(n)$. In this case, the second largest eigenvalue (in absolute value) of $G_n$ is almost surely $O(d^{3/4})$ \cite{krivelevich2001random}, hence, the graph $G_n$ has strong pseudo-random properties. In particular, it follows from \cite[Theorem 4.10]{krivelevich2006pseudo} that $n^{|V(F)|} q_n^{|E(F)|} \lesssim_H \mathbb E[N(F, G_n)] \lesssim_H n^{|V(F)|} q_n^{|E(F)|}$, for any fixed graph $F = (V(F), E(F))$. The result in Corollary \ref{regulargraph} for $d=\Theta(n)$ then follows by arguments similar to {\it Case } 1 above. Therefore, {\it Case } 1 and {\it Case } 2 combined completes the proof of Corollary \ref{regulargraph} for $d \gg 1$. \\

\noindent {\it Case } 3: Finally, consider the case $d= O(1)$. In this case, the graph sequence $G_n$ has bounded maximum degree and the result in Corollary \ref{regulargraph} (b) follows from Proposition \ref{bdd-deg} and the Lemma \ref{lm:regular_NHGn} below.

\begin{lemma}\label{lm:regular_NHGn} Fix $d \geq 2$. Suppose $G_n$ is a uniform random sample from $\mathscr{G}_{n,d}$ and $H = (V (H), E(H))$ is a fixed connected graph with maximum degree $\Delta(H) \leq d$. 

\begin{itemize}
\item[(a)] If $H$ is a tree, that is, $|E(H)| = |V(H)|-1$, then $N(H,G_n)= \Theta_P(n)$.

\item[(b)] If $|E(H)| \geq |V(H)|$, then $N(H,G_n)=O_P(1)$.

\end{itemize}
\end{lemma}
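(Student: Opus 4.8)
The plan is to carry the whole argument out in the configuration (pairing) model and transfer back, which is licit because $d$ is fixed. Pair the $nd$ half-edges of $[n]$ uniformly at random to obtain a multigraph $\tilde G_n$, and recall that conditioned on the event $\mathcal S_n=\{\tilde G_n\text{ simple}\}$ — which has $\Pr^*(\mathcal S_n)\ge c_d>0$ for a constant $c_d$ depending only on $d$ — the conditional law of $\tilde G_n$ is exactly the uniform law on $\mathscr G_{n,d}$. (Here $\Pr^*,\Ex^*$ denote probability and expectation under the pairing measure.) The transfer principle used repeatedly is: $\Ex[N(F,G_n)]\le c_d^{-1}\Ex^*[N(F,\tilde G_n)]$ for any fixed graph $F$ (both sides agree on $\mathcal S_n$ and the left one vanishes off it), and $\Pr(\mathcal A_n)\ge 1-c_d^{-1}\Pr^*(\mathcal A_n^{\mathrm c})$ for events $\mathcal A_n$. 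The one elementary input is that the probability that a prescribed set of $k$ half-edge pairs all lie in the random matching equals $\prod_{i=0}^{k-1}(nd-2i-1)^{-1}\lesssim_{d,k}n^{-k}$; combining this with the fact that each vertex supplies at most $d$ half-edges to any fixed configuration gives, for every fixed $F$ with $\Delta(F)\le d$,
\[\Ex^*[N(F,\tilde G_n)]\ \lesssim_{F,d}\ n^{|V(F)|}\cdot n^{-|E(F)|}\ =\ n^{|V(F)|-|E(F)|}.\]

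\emph{Part (b).} Since $H$ is connected with $|E(H)|\ge|V(H)|$, it contains a cycle; let $\ell_0\ge 3$ be the length of a shortest one. Every subgraph of $G_n$ isomorphic to $H$ contains a subgraph isomorphic to $C_{\ell_0}$, and each fixed copy of $C_{\ell_0}$ extends to $\lesssim_{H,d}1$ copies of $H$ (list the remaining vertices of $H$ so each is adjacent to an earlier one and use $\Delta(G_n)=d$), so $N(H,G_n)\lesssim_{H,d}N(C_{\ell_0},G_n)$. By the first-moment bound above and the transfer principle, $\Ex[N(C_{\ell_0},G_n)]\lesssim_{d,\ell_0}n^{\ell_0-\ell_0}=O(1)$, so $N(C_{\ell_0},G_n)=O_P(1)$ by Markov's inequality (this is also the classical Poisson limit for short-cycle counts in random regular graphs). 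Hence $N(H,G_n)=O_P(1)$.

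\emph{Part (a).} Now $H$ is a tree, so $|V(H)|-|E(H)|=1$. The $O_P(n)$ half of $\Theta_P(n)$ is in fact deterministic: in the $d$-regular graph $G_n$ the number of subgraph-copies of $H$ through a fixed vertex is $\lesssim_{H,d}1$ (a BFS embedding with the root pinned down has $\le d$ choices per new vertex), whence $N(H,G_n)\lesssim_{H,d}|V(G_n)|$. For the matching lower bound, set $R:=|V(H)|$ and call $v$ \emph{good} if the subgraph $B^{G_n}_R(v)$ induced on the vertices within distance $R$ of $v$ contains no cycle. Since $H$ is connected on $R$ vertices, every copy of $H$ through a good $v$ lies inside $B^{G_n}_R(v)$, so the number of copies of $H$ through $v$ equals a constant $c_{H,d}$ — the number of copies of $H$ through the root of the depth-$R$ ball of the infinite $d$-regular tree $T_d$ — and $c_{H,d}\ge1$ because a tree of maximum degree at most $d$ embeds into $T_d$ with any prescribed vertex at the root (this is the only place $\Delta(H)\le d$ is used). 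A vertex fails to be good only if it lies within distance $R$ of a cycle of $G_n$ of length at most $L_{d,R}:=1+d+d(d-1)+\cdots+d(d-1)^{R-1}$ (a radius-$R$ ball has at most $L_{d,R}$ vertices), so the number of non-good vertices is at most a $(d,R)$-dependent constant times the number of such short cycles, which has bounded $\Ex^*$-expectation and hence is $O_P(1)$. Summing the per-vertex counts, $|V(H)|\,N(H,G_n)=\sum_v\#\{\text{copies of }H\text{ through }v\}\ge c_{H,d}\,\#\{\text{good }v\}=c_{H,d}(|V(G_n)|-O_P(1))$, so $N(H,G_n)\ge\frac{c_{H,d}}{2|V(H)|}|V(G_n)|$ with probability tending to $1$; together with the deterministic upper bound this yields $N(H,G_n)=\Theta_P(n)$.

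\emph{Main obstacle.} The argument is light; the only points needing care are the two invocations of the configuration-model reduction — one must verify that each inequality survives conditioning on simplicity, which it does purely because $\Pr^*(\mathcal S_n)$ is bounded away from $0$ — and, in part (a), the structural observation that the number of copies of $H$ through a vertex is determined by a neighbourhood of bounded radius and is therefore constant and nonzero off an $O_P(1)$ set of vertices, the short-cycle count being the single genuinely probabilistic ingredient.
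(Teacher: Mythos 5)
Your proof is correct, but for part (a) it takes a genuinely different route from the paper. The paper proves the lower bound $N(H,G_n)\gtrsim_P n$ by induction on $|V(H)|$: it deletes a leaf of $H$ to get a smaller tree $F$, invokes the inductive hypothesis $N(F,G_n)=\Theta_P(n)$, discards the $O_P(1)$ vertices lying on cycles of length at most $|V(H)|$ (using Bollob\'as's Poisson limit for short-cycle counts), and extends each surviving copy of $F$ to a copy of $H$ by attaching one more neighbour, which exists because the attachment vertex has degree at most $d-1$ in $F$ and is not on a short cycle. Your argument replaces the induction by a single local (Benjamini--Schramm-type) observation: all but $O_P(1)$ vertices have an acyclic radius-$|V(H)|$ ball, such a ball in a $d$-regular graph is forced to be the depth-$|V(H)|$ ball of the infinite $d$-regular tree, and that ball carries a fixed positive number $c_{H,d}\ge 1$ of copies of $H$ through its root (here is where $\Delta(H)\le d$ enters, exactly as in the paper, where it guarantees the leaf can be re-attached). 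Both proofs ultimately rest on the same probabilistic fact --- short cycles are $O_P(1)$ --- but yours gets it from a direct first-moment computation in the configuration model rather than from the cited Poisson limit theorems, and it avoids the induction entirely; the price is the (correctly handled) bookkeeping of the transfer from the pairing model to the uniform simple $d$-regular graph. For part (b) the two arguments are essentially the same (reduce to the count of a fixed short cycle and multiply by $d^{O(1)}$ for the extensions); you prove the cycle count is $O_P(1)$ by Markov in the configuration model where the paper cites Wormald, and you treat $|E(H)|>|V(H)|$ and $|E(H)|=|V(H)|$ uniformly, obtaining the slightly weaker but sufficient conclusion $O_P(1)$ in the former case where the paper notes the count is in fact $0$ asymptotically almost surely.
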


\begin{proof} (a) Let $\mathcal{T}_{m,d}$ be the collection of all trees with $m$ vertices having maximum degree $d$. For the proof of (a) we will induct on $m$. If $m=2$, then $H=K_2$ is just the edge, and $N(K_2, G_n)=\frac{nd}2=\Theta(n)$. Now, fix $m \ge 3$. Suppose the claim is true for all trees $F\in \mathcal{T}_{m-1,d}$. Fix a tree $H\in \mathcal{T}_{m,d}$. Consider the graph $F\in\mathcal{T}_{m-1,d}$ obtained by removing any leaf $v$ in $H$. The degree of the vertex $v$ in $F$ is at most $d-1$. For $3\le r\le m$, let $X_{r,n}$ be the number of $r$-cycles in $G_{n}$, and let $V_{m,n}$ be the set of all vertices which passes through a cycle of length $r$, for some $r\in [3,m]$. Since a cycle of length $r$ has exactly $r$ vertices, we have the trivial inequality $|V_{m,n}|\le \sum_{r=3}^mrX_{r,n}$. Also, it follows from
 \cite[Theorem 2]{bollobas1980} that
		 $$ \sum_{r=3}^m rX_{r, n} \stackrel{D}{\to} \sum_{r=3}^m r\cdot\operatorname{Pois}\left(\frac{(d-1)^r}{2r}\right),$$	
		 where the Poisson random variables are independent. In particular this implies $|V_{m,n}|=O_P(1)$. By induction hypothesis we have $N(F,G_{n})=\Theta_P(n)$, 
		 and so $N(F,G_{n}[V(G_n)\setminus V_{m,n}])=\Theta_P(n)$, where $G_n[V(G_n)\setminus V_{m,n}]$ denotes the induced subgraph of $G_n$ over the vertices in $V(G_n)\setminus V_{m,n}$. Now, consider a copy of $F$ having only vertices in $V(G_n)\setminus V_{m, n}$. Since $v$ does not pass through a cycle and the degree of $v$ in $F$ is at most $d-1$, $v$ must be connected to at least one new vertex (may belong to $V_{m, n}$ as well) which is not in that copy of $F$. This produces a copy of $H$ in $G_n$. Note that given a copy of $H$ there are only finitely many copies $F$ which are subgraphs of $H$. This implies, 
		 $$N(H,G_n) \gtrsim N(F,G_n[V(G_n)\setminus V_{m ,n}])= \Theta_P(n).$$ Since the upper bound $N(H,G_n)=O(n)$ holds trivially for any bounded degree graph,   we have $N(H,G_n)=\Theta_P(n)$, and so the proof of (a) is complete via induction. 
	
(b) Recall that when $|E(H)| > |V(H)|$, then $N(H,G_n)$ is zero asymptotically almost surely \cite[Lemma 2.7]{wormald1999models}.  Next, suppose $|E(H)| =  |V(H)|$ and $H$ is a cycle, then  $N(H,G_n)=O_P(1)$ (by \cite[Theorem 2.5]{wormald1999models}). Finally, suppose $|E(H)| =  |V(H)|$, but $H$ is not a cycle. In this case $H$ is unicyclic, that is, it has exactly one cycle $C_s$ for some $s \geq 3$. Then, $N(H, G_n) \leq N(C_s, G_n) d^{|V(H)|-s} = O_P(1)$, since $N(C_s, G_n) =O_P(1)$ (by \cite[Theorem 2.5]{wormald1999models}) and $d=O(1)$.  
\end{proof}

\subsection{Proof of Proposition \ref{dense}} 
\label{sec:dense_pf}

To begin with, use  \eqref{e:tvar} to get
	\begin{align} 
	\sigma(H,G_n)^2 & \gtrsim p_n^{2|V(H)|-1} \sum_{v \in V(G_n)}  t_H(\{v\})^2 \nonumber \\ 
	& \ge \frac{p_n^{2|V(H)|-1}}{|V(G_n)|} \left[\sum_{v\in V(G_n)} t_H(\{v\}) \right]^2 \tag*{(by the Cauchy-Schwarz inequality)}\nonumber \\ 
 & =p_n^{2|V(H)|-1} |V(G_n)|^{2|V(H)|-1}, 
 \label{eq:sigma_HGn}	
	\end{align}
	where the last step uses the fact that 
	$$\sum_{v\in V(G_n)}t_H(\{v\}) \gtrsim_H N(H,G_n) =\Theta_H(|V(G_n)|^{|V(H)|}).$$ 
	The first inequality above uses \eqref{th-iden} and the second equality is by the assumption $t(H, W)> 0$, which implies, $N(H, G_n) = \Theta(|V(G_n)|^{|V(H)|})$. For controlling $W_n$, note that for a tuple $\{\s_1, \s_2, \s_3, \s_4\}$ to be connected, the graph $\mathcal{G}(\s_1, \s_2, \s_3, \s_4)$ (recall Definition \ref{eq:connected}) can have at most $2(2|V(H)-1)-1=4|V(H)|-3$ vertices. This implies, 
	\begin{align*}
	\Ex[W_n]\lesssim_H \left(|V(G_n)| p_n\right)^{4|V(H)|-3}.
	\end{align*} 
Combining this with \eqref{e:wass} and \eqref{eq:sigma_HGn} gives 
$$\mathrm{Wass}(Z(H, G_n), N(0, 1)) \lesssim_H \sqrt{\frac{\Ex[W_n]}{\sigma(H,G_n)^{4}}}  \lesssim_H (|V(G_n)|p_n)^{-\frac{1}{2}} \to 0,$$ whenever $|V(G_n)|p_n \gg 1$, proving (a).

For (b), note that $N(H,G_n) p_n^{|V(H)|}\lesssim_H  (np_n)^{|V(H)|} = O(1)$, by assumption. Hence, the result in (b) is an immediate consequence of Lemma \ref{converse-tool}.

\section{Proof of Theorem \ref{thm:normality}} \label{sec:normal_ZHGn_II}

In Appendix \ref{sec:truncation} we prove various properties of the truncated statistic $T_M^{\circ}(H, G_n)$ (recall (\ref{def:trunc})). Using these properties we complete the proof of Theorem \ref{thm:normality} in Appendix \ref{sec:pf_normality}.

\subsection{Properties of the  Truncated Statistic $T_M^{\circ}(H, G_n)$} 
\label{sec:truncation}

In this section section, we collect some properties of the truncation \eqref{eq:C_truncation} and the truncated statistic \eqref{def:trunc}. For notational convenience define, 
\begin{align*}
r_H(A):=\frac1{|\ah|}\sqrt{\sum_{\substack{\s_1, \s_2  \in V(G_n)_{|V(H)|} \\ \bar \s_1\bigcap \bar \s_2=A}} M_H(\s_1)M_H(\s_2)}. 
\end{align*}
These counts are essentially the building blocks for $\vr[T(H, G_n)]$. To see this, recall that if $|\bar \s_1  \bigcap \bar \s_2 |=K$, then $\operatorname{Cov}[X_{\s_1}, X_{\s_2}]=p_n^{2{|V(H)|}-K}-p_n^{2|V(H)|} $. This means, 
\begin{align} 
\vr[T(H, G_n)] & =\frac1{|\ah|^2} \sum_{\substack{\s_1, \s_2 \in V(G_n)_{|V(H)|} \\ \bar \s_1 \bigcap \bar \s_2 \ne \emptyset}}  M_H(\s_1)M_H(\s_2) \operatorname{Cov}[X_{\s_1},X_{\s_2}] \nonumber \\ 
& = \frac1{|\ah|^2}\sum_{K=1}^{|V(H)|}  \sum_{\substack{\s_1, \s_2 \in V(G_n)_{|V(H)|} \\ 
 | \bar \s_1 \bigcap \bar \s_2 | = K }} M_H(\s_1)M_H(\s_2)(p_n^{2|V(H)|- K}-p_n^{2 |V(H)| })  \nonumber \\ 
\label{e:blockvar}  & =  \sum_{\substack{A \subset V(G_n) \\ 1\leq |A| \leq |V(H)|}} p_n^{2|V(H)|-|A|}(1-p_n^{|A|}) r_H(A)^2 . 
\end{align} 
We begin by showing that the order of variance of $T(H, G_n)$ remain the same if the functions $r_H(A)$ are replaced by the local count functions $t_H(A)$. 

\begin{lemma}\label{compare} Define 
$$\beta_H(p_n):= \sum_{K=1}^{|V(H)|} p_n^{2 |V(H)| - K}\sum_{A \subset V(G_n): |A| = K} t_H(A)^2.$$ Then 
\begin{align} \label{e:tvar}
\frac{(1-p_n) \beta_H(p_n)}{2^{|V(H)|}-1} \le  \vr[T(H, G_n)] \le \beta_H(p_n) .
\end{align}
\end{lemma}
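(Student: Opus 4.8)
\textbf{Proof proposal for Lemma \ref{compare}.} The plan is to express both $\vr[T(H,G_n)]$ and $\beta_H(p_n)$ as sums over the ``intersection sets'' $B=\bar\s_1\cap\bar\s_2$ of pairs of tuples, and then compare them block by block. The starting point is the identity \eqref{e:blockvar}, which already writes $\vr[T(H,G_n)]=\sum_{A:\,1\le|A|\le|V(H)|}p_n^{2|V(H)|-|A|}(1-p_n^{|A|})\,r_H(A)^2$. The first step is the combinatorial identity
\[
t_H(A)^2 \;=\; \sum_{\substack{B\supseteq A\\ |B|\le |V(H)|}} r_H(B)^2 ,
\]
valid for every nonempty $A\subseteq V(G_n)$ with $|A|\le|V(H)|$. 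This follows by squaring the definition \eqref{def:tj}, namely $|\ah|^2 t_H(A)^2=\sum_{\s_1:\,\bar\s_1\supseteq A}\sum_{\s_2:\,\bar\s_2\supseteq A}M_H(\s_1)M_H(\s_2)$, and grouping the pairs $(\s_1,\s_2)$ according to $B:=\bar\s_1\cap\bar\s_2$; since $\bar\s_1,\bar\s_2\supseteq A$ one has $B\supseteq A$, and $|B|\le|V(H)|$ because $B\subseteq\bar\s_1$. Recognizing each inner block as $|\ah|^2 r_H(B)^2$ yields the identity (blocks $B$ admitting no realizing pair simply contribute $0$, which is harmless).

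Next I would substitute this identity into the definition of $\beta_H(p_n)$ and interchange the order of summation, summing first over $B$ and then over the nonempty subsets $A\subseteq B$:
\begin{align*}
\beta_H(p_n) \;=\; \sum_{\substack{B\subset V(G_n)\\ 1\le|B|\le|V(H)|}} r_H(B)^2 \sum_{\substack{A:\,\emptyset\ne A\subseteq B}} p_n^{2|V(H)|-|A|} \;=\; \sum_{\substack{B\subset V(G_n)\\ 1\le|B|\le|V(H)|}} r_H(B)^2 \sum_{j=1}^{|B|}\binom{|B|}{j}p_n^{2|V(H)|-j}.
\end{align*}
For the upper bound $\vr[T(H,G_n)]\le\beta_H(p_n)$ it then suffices to compare coefficients of $r_H(B)^2$ term by term: the single $j=|B|$ summand on the right equals $p_n^{2|V(H)|-|B|}$, which already dominates the coefficient $p_n^{2|V(H)|-|B|}(1-p_n^{|B|})$ in \eqref{e:blockvar}, and the remaining summands are nonnegative. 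For the lower bound I would estimate the inner sum from above by extracting $p_n^{2|V(H)|-|B|}$ and using $\sum_{j=1}^{|B|}\binom{|B|}{j}p_n^{|B|-j}=\sum_{i=0}^{|B|-1}\binom{|B|}{i}p_n^i\le 2^{|B|}-1\le 2^{|V(H)|}-1$, together with $1-p_n^{|B|}\ge 1-p_n$ for $|B|\ge1$. This gives $\beta_H(p_n)\le(2^{|V(H)|}-1)\sum_B p_n^{2|V(H)|-|B|}r_H(B)^2\le\frac{2^{|V(H)|}-1}{1-p_n}\vr[T(H,G_n)]$, which rearranges to the claimed lower bound.

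I do not expect a genuine obstacle: the only delicate point is the bookkeeping in the intersection-set decomposition, in particular pinning down the exact range of the index $B$ and checking that the identity for $t_H(A)^2$ is unaffected by blocks $B$ with no realizing pair of tuples. Once that identity and the order-swap are in place, the rest is the elementary binomial estimate above combined with $1-p_n^{|B|}\in[1-p_n,1]$.
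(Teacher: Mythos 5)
Your proof is correct and follows essentially the same route as the paper's: both rest on the variance decomposition \eqref{e:blockvar}, the identity $t_H(A)^2=\sum_{A'\supseteq A}r_H(A')^2$, the interchange of summation producing the binomial coefficients, and the bound $\sum_{K=1}^{|A'|}\binom{|A'|}{K}p_n^{2|V(H)|-K}\le p_n^{2|V(H)|-|A'|}(2^{|A'|}-1)$ combined with $1-p_n^{|A'|}\ge 1-p_n$. The only cosmetic difference is that the paper gets the upper bound directly from $r_H(A)^2\le t_H(A)^2$ and $1-p_n^{K}\le 1$ rather than from your coefficient-by-coefficient comparison, but the content is identical.
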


\begin{proof} Recalling \eqref{e:blockvar} and using the bounds  $r_H(A)^2\le t_H(A)^2$ and $1-p_n^K \le 1$, gives $\vr[T(H, G_n)] \le \beta_H(p_n)$. 

For the other side, recalling the definition of $t_H(A)$ from \eqref{def:tj} we note the following identity,  
\begin{align*}
t_H(A)^2  & =\frac1{|\ah|^2}\sum_{\bar \s_1 \bigcap \bar\s_2\supseteq A }M_H(\s_1)M_H(\s_2) \\ & = \frac1{|\ah|^2} \sum_{A' \supseteq A}\sum_{\substack{\bar \s_1 \bigcap \bar \s_2= A' }}M_H(\s_1)M_H(\s_2) = \sum_{A' \supseteq A} r_H(A')^2
	\end{align*}
Taking sum over $A$ such that $|A|=K$  gives, 
\begin{align} \label{e:t-r-iden}
\sum_{A \subseteq V(G_n): |A|=K} t_H(A)^2 & = \sum_{\substack{A' \subseteq V(G_n) \\ K \leq |A'| \leq |V(H)|}} \sum_{\substack{A \subseteq A' \\ |A| = K}} r_H(A')^2 = \sum_{\substack{A' \subseteq V(G_n) \\ K \leq |A'| \leq |V(H)|}}  \binom{|A'|}{K}  r_H(A')^2 . 
	\end{align}
We now use the RHS of \eqref{e:t-r-iden} to rewrite $\beta_H(p_n)$. Interchanging the order of the sum we get
	\begin{align*}
	\beta_H(p_n) & = \sum_{K=1}^{|V(H)|} p_n^{2 |V(H)| - K}  \sum_{\substack{A' \subseteq V(G_n) \\ K \leq |A'| \leq |V(H)|}}  \binom{|A'|}{K}  r_H(A')^2 \\ 
		& = \sum_{\substack{A \subseteq V(G_n) \\ 1 \leq |A| \leq |V(H)|}}   r_H(A)^2 \sum_{K=1}^{|A|} p_n^{2{|V(H)|}-K} \binom{|A|}{K} \\ 
		& \le \label{e:upblock} \sum_{\substack{A \subseteq V(G_n) \\ 1 \leq |A| \leq |V(H)|}}   r_H(A)^2 p_n^{2 |V(H)|-|A|} (2^{|A|}-1) \tag*{(using the bound $p_n^{2{|V(H)|}-K} \leq p_n^{2{|V(H)|}-|A|}$)}\\ 
		& \le \frac{2^{|V(H)|}-1}{1-p_n} \sum_{\substack{A \subseteq V(G_n) \\ 1 \leq |A| \leq |V(H)|}}  r_H(A)^2 p_n^{2 |V(H)| - |A|}(1-p_n^{|A|})  \le \frac{2^{|V(H)|}-1}{1-p_n}  \vr[T(H, G_n)] ,
	\end{align*}
where the last step uses \eqref{e:blockvar}. 
	\end{proof}
	
Now, recall the definition of the truncated statistic $T_M^{\circ}(H, G_n)$ from \eqref{def:trunc}. Clearly, for each fixed $n$ as $M \to \infty$, $T_M^{\circ}(H, G_n) \uparrow T(H, G_n)$. 
The following lemma shows that this convergence is in probability and in $L^1$ (after proper scaling) uniformly in $n$. 
	
	\begin{lemma}\label{properties} The truncated statistic $T_M^{\circ}(H, G_n)$ defined in \eqref{def:trunc} has the following properties: 
		\begin{enumerate}
			\item[$(a)$] $\Pr(T(H, G_n)\neq T_M^{\circ}(H, G_n)) \le \frac{2^{|V(H)|} - 1}{M(1-p_n)}$.
			\item[$(b)$] $\frac1{\sqrt{\vr[T(H, G_n)]}}\Ex|T(H, G_n)-T_M^{\circ}(H, G_n)| \le \frac{2^{|V(H)|}-1}{\sqrt{M}(1-p_n)}$.
		\end{enumerate}
	\end{lemma}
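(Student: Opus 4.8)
The backbone of both bounds is the identity
\[
T(H,G_n)-T_M^{\circ}(H,G_n)=\frac1{|\ah|}\sum_{\s\in V(G_n)_{|V(H)|}}M_H(\s)X_{\s}\ind\{\mathcal{C}_M(\s)^{\mathrm{c}}\},
\]
whose left-hand side is a non-negative integer, together with the observation that $\mathcal{C}_M(\s)^{\mathrm{c}}=\bigcup_{\emptyset\ne A\subseteq\bar\s}\mathcal{C}_M(A)$, where each $\mathcal{C}_M(A)$ is a deterministic event (it depends only on $G_n$, through $t_H(A)$ and $\vr[T(H,G_n)]$). The second ingredient is Lemma \ref{compare}, which converts any sum of the form $\beta_H(p_n)=\sum_{A}p_n^{2|V(H)|-|A|}t_H(A)^2$ into $\vr[T(H,G_n)]$ up to the constant $(2^{|V(H)|}-1)/(1-p_n)$.

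For part $(a)$, the plan is to note that if $M_H(\s)X_{\s}\ind\{\mathcal{C}_M(\s)^{\mathrm{c}}\}=1$ for some $\s$, then $X_A=1$ for some nonempty $A\subseteq\bar\s$ on which $\mathcal{C}_M(A)$ holds, because $X_{\s}=1$ forces $X_u=1$ for every $u\in\bar\s$. Hence $\{T(H,G_n)\ne T_M^{\circ}(H,G_n)\}\subseteq\bigcup_{A:\,\mathcal{C}_M(A)\text{ holds}}\{X_A=1\}$, and a union bound gives $\Pr(T\ne T_M^{\circ})\le\sum_{A}\ind\{\mathcal{C}_M(A)\}\,p_n^{|A|}$, the sum ranging over $A$ with $1\le|A|\le|V(H)|$. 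On $\mathcal{C}_M(A)$ one has $\ind\{\mathcal{C}_M(A)\}\le t_H(A)^2/(Mp_n^{2|A|-2|V(H)|}\vr[T(H,G_n)])$ (and this pointwise bound holds trivially when the event fails), so multiplying by $p_n^{|A|}$, summing in $A$, and applying Lemma \ref{compare} collapses the estimate to $\beta_H(p_n)/(M\vr[T(H,G_n)])\le (2^{|V(H)|}-1)/(M(1-p_n))$.

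For part $(b)$, since $T-T_M^{\circ}\ge 0$ we take expectations, use $\Ex[X_{\s}]=p_n^{|V(H)|}$ and the union bound $\ind\{\mathcal{C}_M(\s)^{\mathrm{c}}\}\le\sum_{\emptyset\ne A\subseteq\bar\s}\ind\{\mathcal{C}_M(A)\}$, then interchange the order of summation and recognize $\tfrac1{|\ah|}\sum_{\s:\,\bar\s\supseteq A}M_H(\s)=t_H(A)$, obtaining $\Ex[T-T_M^{\circ}]\le p_n^{|V(H)|}\sum_A\ind\{\mathcal{C}_M(A)\}\,t_H(A)$. Taking square roots in the definition of $\mathcal{C}_M(A)$ yields $\ind\{\mathcal{C}_M(A)\}\le t_H(A)/(\sqrt M\,p_n^{|A|-|V(H)|}\sqrt{\vr[T(H,G_n)]})$, so $\ind\{\mathcal{C}_M(A)\}\,t_H(A)\le t_H(A)^2/(\sqrt M\,p_n^{|A|-|V(H)|}\sqrt{\vr[T(H,G_n)]})$; summing in $A$ gives $\Ex[T-T_M^{\circ}]\le\beta_H(p_n)/\sqrt{M\,\vr[T(H,G_n)]}$, and dividing by $\sqrt{\vr[T(H,G_n)]}$ and invoking Lemma \ref{compare} once more produces the claimed bound $(2^{|V(H)|}-1)/(\sqrt M(1-p_n))$.

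The two proofs are the same computation; the only reason $(a)$ gets a factor $M$ and $(b)$ only $\sqrt M$ is the weight multiplying $\ind\{\mathcal{C}_M(A)\}$ — a constant $p_n^{|A|}$ in $(a)$, which can absorb the full quadratic gap coming from $\mathcal{C}_M(A)$, versus $t_H(A)$ itself in $(b)$, which can absorb only one square-root factor. I do not expect a genuine obstacle here; the only point demanding care is bookkeeping of the exponents of $p_n$ when assembling $\beta_H(p_n)$ and checking that the pointwise bounds on $\ind\{\mathcal{C}_M(A)\}$ remain valid when the event fails, so that the estimates can be carried out term by term.
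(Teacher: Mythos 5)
Your proposal is correct and follows essentially the same route as the paper: union bound over the deterministic events $\mathcal{C}_M(A)$, the pointwise bounds $\ind\{\mathcal{C}_M(A)\}\le t_H(A)^2/(Mp_n^{2|A|-2|V(H)|}\vr[T(H,G_n)])$ for (a) and its square root for (b), and Lemma \ref{compare} to convert $\beta_H(p_n)$ into $\vr[T(H,G_n)]$. All steps, including the validity of the pointwise indicator bounds when the event fails, check out.
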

	\begin{proof} 
	Observe that
		\begin{align*}
		\Pr(T(H, G_n) \neq T_M^{\circ}(H, G_n)) & \le  \sum_{\substack{A \subseteq V(G_n) \\ 1 \leq |A| \leq |V(H)|}}   \Pr(X_{A}=1)\ind\{\mathcal{C}_M(A)\} \\ 
& = \sum_{\substack{A \subseteq V(G_n) \\ 1 \leq |A| \leq |V(H)|}}   p_n^{|A|} \ind\{\mathcal{C}_M(A)\} \\
& \leq \sum_{\substack{A \subseteq V(G_n) \\ 1 \leq |A| \leq |V(H)|}}   \frac{p_n^{2|V(H)| -  |A|} }{M\vr[T(H, G_n)]} t_H(A)^2    \\ 
& \leq \frac{2^{|V(H)|} - 1}{M (1-p_n) } ,  
				\end{align*}
where the last step uses \eqref{e:tvar}. This completes the proof of (a). 

Note that
		\begin{align*}
		\Ex|T(H, G_n)-T_M^{\circ}(H, G_n)| & =\frac{p_n^{|V(H)|}}{|\ah|}\sum_{\s\in V(G_n)_{|V(H)|}} M_{H}(\bm s) \ind\{\mathcal{C}_M(\s)\} \\ 
		& \le \frac{p_n^{|V(H)|}}{|\ah|}\sum_{K=1}^{|V(H)|}\sum_{\s\in V(G_n)_{|V(H)|}}  M_{H}(\bm s)  \sum_{ A \subseteq \s : |A| = K }\ind\{\mathcal{C}_M(A)\} \\ 
		&  \le \frac{p_n^{|V(H)|}}{|\ah|}\sum_{K=1}^{|V(H)|}\sum_{A \subset V(G_n): |A| = K}\frac{p_n^{|V(H)|-K}t_H(A)}{\sqrt{M\vr[T(H, G_n)]}} \sum_{\s\supseteq A} M_H(\bm s)  \tag*{(recall \eqref{eq:C_truncation})}\\ 
		& \le \frac{1}{\sqrt{M\vr[T(H, G_n)]}}\sum_{K=1}^{|V(H)|}\sum_{A \subset V(G_n): |A| = K} p_n^{2 |V(H)|-K}t_H(A)^2,  \tag*{(recall \eqref{def:tj})}\\ 
	& 	\le \frac{(2^{|V(H)|} - 1) \vr[T(H, G_n)]}{(1-p_n) \sqrt{M}},
		\end{align*}
where the last step uses \eqref{e:tvar}. This completes the proof of (b). 
\end{proof}

Next, we show that the truncation (as in \eqref{def:trunc}) ensures all the higher-order moments of $T_M^{\circ}(H, G_n)$ are bounded. 

\begin{lemma} \label{moment_r} Fix $M > 0$ and an integer $R \geq 1$. Then for the truncated statistic $T^{\circ}_M(H, G_n)$ as defined in \eqref{def:trunc}, 
\begin{align} \label{e:moment_r}
\limsup_{n\to\infty} \left| \Ex\left[\frac{T_M^{\circ}(H, G_n)-\Ex[T_M^{\circ}(H, G_n)]}{\sqrt{\vr[T(H, G_n)]}}\right]^{R} \right| \lesssim_{M, R} 1.
	\end{align}
\end{lemma}

\begin{proof} Note that it suffices to prove the result for $R$ even. For $R \geq 2$ even, let $\mathcal{P}_{R}$ denote the collection of all partitions of the set $[R]:=\{1,2,\ldots, R\}$ such that each 
subset of the partition has at least two elements. A partition $\bm \lambda \in \mathcal{P}_R$ will be denoted by $\bm \lambda = \{\lambda_1,\lambda_2, \ldots,\lambda_{|\bm \lambda|} \} $, where $|\bm \lambda|$ denotes the number of subsets in the partition and $\lambda_1,\lambda_2, \ldots,\lambda_{|\bm \lambda|} \subseteq [R]$ are the subsets in the partition $\bm \lambda$. 

Given a collection of $R$ tuples $\s_1,\ldots,\s_R \in V(G_n)_{|V(H)|}$, $\bm \mu[\s_1,\ldots,\s_R]$ will denote the partition of $[R]$ obtained by considering the connected components of the graph $\mathcal{G}(\s_1,\ldots,\s_R)$ as in Definition \ref{eq:connected}. Then for any collection $\{\s_1,\ldots,\s_R\}$ which is weakly connected (recall definition in \eqref{eq:weak_connected}), the partition $\bm \mu[\s_1,\ldots,\s_R]\in \mathcal{P}_R$.  Note that $\Ex[{Z}_{\s_1} {Z}_{\s_2} \ldots {Z}_{\s_R}] =0$, unless $\bm \mu[\s_1,\ldots,\s_R]$ is weakly connected. Therefore, recalling the definition of $T_M^{\circ}(H, G_n)$ from \eqref{def:trunc} and 
denoting by $Z_{\s}= X_{\s} - p_n^{|V(H)|}$, for $\s \in V(G_n)_{|V(H)|}$, gives, 
	\begin{align} \label{e:momineq}
	(\Ex[T_M^{\circ}&(H, G_n)]-\Ex[T_M^{\circ}(H, G_n)])^R \nonumber \\ 
	\nonumber & \le  \frac{1}{|\ah|^{R}}\sum_{\s_1, \ldots, \s_R \in V(G_n)_{|V(H)|}} \left|\Ex\left[\prod_{r=1}^{R}{Z}_{\s_r}\right] \right|\prod_{r=1}^{R}M_H(\s_r)\ind\{\mathcal{C}_M(\s_r)^c\}\\
	\nonumber& =\frac{1}{|\ah|^{R}}\sum_{\bm \lambda \in \mathcal{P}_R}  \sum_{\substack{\s_1, \ldots, \s_R \in V(G_n)_{|V(H)|}\\ \bm \mu[\s_1,\ldots,\s_R]=\bm \lambda}}\prod_{t=1}^{|\bm \lambda|} \left\{ \left|\Ex\left[\prod_{r \in \lambda_t} {Z}_{\s_r}\right] \right| \prod_{r \in \lambda_t}  M_H(\s_r)\ind\{\mathcal{C}_M(\s_r)^c\} \right\} \\
	\nonumber& \le \frac{1}{|\ah|^{R}}\sum_{\bm \lambda\in \mathcal{P}_R}  \prod_{t=1}^{|\bm \lambda|} \sum_{\{\s_r, r \in \lambda_t\}\in \mathcal{K}_{n, |\lambda_t|}} \left\{   \Ex \left| \prod_{r \in \lambda_t} {Z}_{\s_r}  \right| \prod_{r \in \lambda_t}  M_H(\s_r)\ind\{\mathcal{C}_M(\s_r)^c\} \right \}\\
	& =\frac{1}{|\ah|^{R}}\sum_{\bm \lambda\in \mathcal{P}_R}  \prod_{t=1}^{|\bm \lambda|}Q_{n,|\lambda_t|},
\end{align} 
where $\mathcal{K}_{n, R}$ is the set of all $R$ connected $|V(H)|$-tuples as in Definition \ref{eq:connected}, and, for $1 \leq r \leq R$,  
\begin{align}\label{def:claim2}
Q_{n, r}:=&\sum_{\{\s_1,\ldots, \s_r\}\in \mathcal{K}_{n, r} }\Ex\left|\prod_{a=1}^{r}{Z}_{\s_a}\right| \prod_{a=1}^{r}M_H(\s_a)\ind\{\mathcal{C}_M(\s_a)^c\} 
\end{align}

Now, define $\mathcal{N}_r:=\{\bm \theta := (\theta_1, \ldots, \theta_r) \in \mathbb{N}^r: \theta_1=|V(H)|, 1\le \theta_a \le |V(H)| -1, \text{ for } 2 \leq a \leq r \}$. For $\bm \theta=(\theta_1,\ldots,\theta_r) \in \mathcal{N}_r$, set
$$S(\bm \theta):=\sum_{\s_1,\ldots, \s_r \in \mathcal{A}_r(\bm \theta)} \prod_{a=1}^{r} M_H(\s_a)\ind\{\mathcal{C}_M(\s_a)^c\} . $$ 
where $\mathcal{A}_r(\bm \theta)$ is the collection of all $\s_1,\ldots, \s_r \in V(G_n)_{|V(H)|}$ such that $|\bar \s_a \bigcap (\bigcup_{b=1}^{a-1} \bar \s_b ) ^c|=\theta_a$ for $a \in \{2, 3, \ldots,  r \} $. Then it follows from \eqref{def:claim2} and Lemma \ref{abs-bound} that 
\begin{align}\label{eq:r-s}
Q_{n, r}\lesssim_r  \sum_{\bm \theta = (\theta_1,\ldots,\theta_r) \in \mathcal{N}_r}  p_n^{\sum_{a=1}^{r} \theta_a} S( \bm \theta ).
\end{align} 
We now claim that for any $\bm \theta=(\theta_1, \ldots,\theta_r)\in \mathcal{N}_r$, 
\begin{align}\label{eq:claim2}
S(\bm \theta)\lesssim_{ M} p_n^{-\sum_{a=1}^{r} \theta_a} \sigma(H,G_n)^r.
\end{align} 
Given \eqref{eq:r-s} and \eqref{eq:claim2}, it follows that $Q_{n, r}  \lesssim_{M, R} \sigma(H, G_n)^r$, for  $1 \leq r \leq R$. This implies, from \eqref{e:momineq} that
\[\Ex\big(T_M^{\circ}(H, G_n)-\Ex[T_M^{\circ}(H, G_n)]\big)^R \lesssim_{M, R} \sum_{\bm \lambda\in \mathcal{P}_R}\prod_{t=1}^{|\bm \lambda|} \sigma(H,G_n)^{|\lambda_t|} \lesssim_{H, M, R} \sigma(H,G_n)^R,\]
as desired in \eqref{e:moment_r}, where the last bound uses the fact that $\sum_{j=1}^{|\bm \lambda|} |\lambda_j|=R$ for every $\bm \lambda\in \mathcal{P}_R$. 

It thus remains to verify \eqref{eq:claim2}. To this effect, 
for any $K \in [1,|V(H)|-1]$ let $W=\{v_{1},v_{2},\ldots,v_{K}\}$ be a set of distinct vertices in $V(G_n)$. Consider the set $$V(G_n)_{V(H), W} =\{\s \in V(G_n)_{|V(H)|}=(s_1,s_2,\ldots,s_{|V(H)|})\in V(G_n)_{|V(H)|}\mid s_{i}=v_i, \text{ for }  1 \leq i \leq K \}.$$
In other words, $V(G_n)_{V(H), W}$ is the collection of $|V(H)|$-tuples such that the first $K$ coordinates of the tuple are fixed to $v_{1},v_{2},\ldots,v_{K}$, respectively.  Note that 
	\begin{align}\label{e:free-fix}
	\frac{1}{|\ah|} \sum_{\s \in V(G_n)_{V(H), W} }  M_H(\s)\ind\{\mathcal{C}_M(\s)^c\}
	& \le \frac{1}{|\ah|}\sum_{\s : \bar \s \supseteq  W } M_H(\s)  \ind\{\mathcal{C}_M(W)^c \} \nonumber  \\ 
	& =t_H(W) \ind\{\mathcal{C}_M(W)^c \} \nonumber \\ 
	 & \le p_n^{|W| - |V(H)|}\sqrt{M \vr[T(H, G_n)]}.
	\end{align} 
 Keeping $\s_1,\ldots,\s_{r-1}$ fixed,  the sum over $\s_{r}$ in $S(\theta_1, \ldots, \theta_r)$ has exactly $\theta_r$ free coordinates and $|V(H)|-\theta_r$ fixed coordinates. Denoting $\bar{\bm\theta}=(\theta_1,\ldots,\theta_{r-1})$, this implies, 
 	\begin{align*}
	S(\theta_1, \ldots, \theta_r)&= \sum_{ \s_1, \ldots, \s_r \in \mathcal{A}_r(\bm \theta) } \prod_{a=1}^{r}M_H(\s_a)\ind\{\mathcal{C}_M(\s_a)^c\}   \nonumber \\ 
	& = \sum_{ \s_1,\ldots, \s_{r-1} \in \mathcal{A}_{r-1}( \bar{\bm\theta})} \prod_{a=1}^{r-1}M_H(\s_a)\ind\{\mathcal{C}_M(\s_a)^c\}   \sum_{\substack{\s_{r}\in V(G_n)_{|V(H)|} \\ |\bar \s_r \bigcap \{\bigcup_{a=1}^{r-1} \bar \s_a \}^c|= \theta_r }} M_H(\s_r) \ind\{\mathcal{C}_M(\s_r)^c\} \\ 
& \lesssim \sum_{\s_1,\ldots, \s_{r-1} \in \mathcal{A}_{r-1}(\bar{\bm \theta}) } \prod_{a=1}^{r-1}M_H(\s_a)\ind\{\mathcal{C}_M(\s_a)^c\}  p_n^{-\theta_r}\sqrt{M \sigma(H, G_n)^2} \tag*{(using \eqref{e:free-fix})}\\
&\lesssim_{M} p_n^{-\theta_r} \sigma(H, G_n) S(\theta_1,\ldots,\theta_{r-1}).
	\end{align*}
Continuing in this way using induction we get the bound 
	\begin{align*}
	S(\theta_1, & \ldots,\theta_r)   \\ 
	& \lesssim_{M, R}  \frac{\sigma(H, G_n)^{r-2}}{ p_n^{\sum_{a=3}^r\theta_a} }\sum_{\substack{\s_1, \s_2 \in V(G_n)_{|V(H)|} \\ |\bar \s_1 \bigcap \bar \s_2|=|V(H)|-\theta_2}} M_H(\s_1)M_H(\s_2)\ind\{\mathcal{C}_M(\s_1)^c\} \ind\{\mathcal{C}_M(\s_2)^c\} \\ 
	& \lesssim_{M, R}  \frac{\sigma(H, G_n)^{r-2}}{p_n^{\sum_{a=1}^r\theta_a} } \sum_{\theta_2=0}^{|V(H)| - 1} \sum_{\substack{\s_1, \s_2 \in V(G_n)_{|V(H)|} \\ |\bar \s_1 \bigcap \bar \s_2|=|V(H)|-\theta_2}} p_n^{\theta_1+\theta_2} M_H(\s_1)M_H(\s_2)\ind\{\mathcal{C}_M(\s_1)^c\} \ind\{\mathcal{C}_M(\s_2)^c\} \\ 
& \lesssim_{M, R} p_n^{-\sum_{a=1}^r \theta_a} \sigma(H, G_n)^r , 
	\end{align*}
	where the last step uses the bound $\operatorname{Cov}[Y_{\s_1},Y_{\s_2}] \lesssim p_n^{|V(H)|+\theta_2}=p_n^{\theta_1+\theta_2}$, along with the expansion of $\sigma(H, G_n)^2=\vr[T(H, G_n)]$ in \eqref{e:blockvar}. This verifies \eqref{eq:claim2}, and hence completes the proof of Lemma \ref{moment_r}. \end{proof}

\subsection{Proof of Theorem \ref{thm:normality}} 
\label{sec:pf_normality} 
Recall $Z(H, G_n)$ as defined in \eqref{eq:ZHGn}. Define $$U_{n, M}:=\frac{T_M^{\circ}(H, G_n)-\Ex[T(H, G_n)] }{\sqrt{\vr[T(H, G_n)] }}, \quad V_{n, M}:=\frac{T_M^{\circ}(H, G_n)-\Ex[T_M^{\circ}(H, G_n)]}{\sqrt{\vr[T(H, G_n)] }},$$  
and $W_{n, M}:=\frac{T_M^{\circ}(H, G_n)-\Ex[T_M^{\circ}(H, G_n)]}{\sqrt{\vr[T_M^{\circ}(H, G_n)]}}.$ \\ 
		
\noindent\textit{Proof of Sufficiency}:  To begin, use \eqref{e:4th} to conclude
\begin{align}\label{eq:2nd2}
\limsup_{M \to\infty}\limsup_{n \to \infty}  \left|\frac{\vr[T_M^{\circ}(H, G_{n})]}{\vr[T(H, G_{n})]}-1\right|=0,
\end{align}
which along with \eqref{e:4th} gives  $\limsup_{M \to\infty}\limsup_{n \to\infty}|\Ex[W_{n, M }^4]-3|=0.$ Then applying \eqref{e:UGn} to the double sequence $W_{n, M}$ we get 
		\begin{align}\label{e:w-wass}
		\limsup_{M \to\infty}\limsup\limits_{n\to\infty}\operatorname{Wass}(W_{n, M },N(0,1))=0.
		\end{align} 
		Now, using  
\begin{align}\label{eq:UnM_VnM}
U_{n, M}=\frac{1}{\sqrt{\Ex[V_{n, M}^2}]}W_{n, M}-\frac{\Ex[T(H, G_n)]-\Ex[T_M^{\circ}(H, G_n)]}{\sqrt{\vr[T(H, G_n)]}},
\end{align} 
the assumption in \eqref{e:4th}, \eqref{eq:2nd2}, and Lemma \ref{properties} (b) gives $\limsup_{M \to\infty}\limsup_{n\to\infty}\operatorname{Wass}(U_{n, M },W_{n,M})=0$. This along with \eqref{e:w-wass} gives $\limsup_{M \to\infty}\limsup_{n\to\infty}\operatorname{Wass}(U_{n, M },N(0,1))=0$, that is, 
\begin{align}\label{e:u-wass}
\limsup_{M\rightarrow\infty}\limsup_{n\rightarrow\infty}\sup_{t\in \mathbb{R}}|\Pr(U_{n,M}\le t)-\Phi(t)|=0,
		\end{align} 
where $\Phi$ denotes the standard normal distribution function. Moreover, note that for any $t\in \mathbb{R}$ we have
\begin{align*}
\Pr(Z(H,G_n)\le t)\le \Pr(U_{n,M}\le t)+\Pr( T(H,G_n)\ne T_M^\circ(H,G_n))\le \Pr(U_{n,M}\le t)+\frac{C}{\sqrt{M}}
\end{align*}
for some finite constant $C$ (not depending on $n$ and $M$) by Lemma \ref{properties} (b).  Next, noting that $U_{n,M}\le Z(H,G_n)$, we also have the lower bound $\Pr(Z(H,G_n)\le t)\ge \Pr(U_{n,M}\le t)$. Combining we get
\begin{align}\label{eq:iff}\sup_{t\in \R}|\Pr(Z(H,G_n)\le t)-\Pr(U_{n,M}\le t)|\le \frac{C}{\sqrt{M}}.
\end{align}
Therefore, taking limits as $n\rightarrow\infty$ followed by $M\rightarrow\infty$  gives 
\[\limsup_{M\rightarrow\infty}\limsup_{n\rightarrow\infty}\sup_{t\in \R}|\Pr(Z(H,G_n)\le t)- \Pr(U_{n,M}\le t)|=0.\]
This implies, by \eqref{e:u-wass}, $\limsup_{n\rightarrow\infty}\sup_{t\in \R}|\Pr(Z(H,G_n)\le t)-\Phi(t)|=0$, completing the proof of the sufficiency in Theorem \ref{thm:normality}. \\

\noindent\textit{Proof of Necessity}: Observe that 
\begin{align}\label{eq:observe}
V_{n, M}=U_{n, M}+\frac{\Ex[T(H, G_n)]-\Ex[T_M^{\circ}(H, G_n)]}{\sqrt{\vr[T(H, G_n)]}}. 
\end{align}
Then invoking Lemma \ref{properties} (b) it suffices to show that 
\begin{align}\label{eq:UnM_I}
\limsup_{M\rightarrow\infty}\limsup_{n\rightarrow\infty}|\Ex[U_{n,M}^2]- 1 | \quad \text{ and } \quad \limsup_{M\rightarrow\infty}\limsup_{n\rightarrow\infty}|\Ex[U_{n,M}^4] - 3 |. 
\end{align} 
To begin with, since $Z(H, G_n) \stackrel{D}{\to} N(0,1)$ by assumption and  using \eqref{eq:iff} it follows that
\begin{align*}
\limsup_{M\rightarrow\infty}\limsup_{n\rightarrow\infty}\sup_{t\in \mathbb{R}}|\Pr(U_{n,M}\le t)-\Phi(t)|=0. 
\end{align*}
Therefore, using uniform integrability, to show \eqref{eq:UnM_I} it suffices to prove that 
\begin{align}\label{eq:UnM_II}
\limsup_{M\rightarrow\infty}\limsup_{n\rightarrow\infty}\Ex[U_{n,M}^6] <\infty. 
\end{align} 
By way of contradiction, assume this does not hold, that is, $\limsup_{M\rightarrow\infty}\limsup_{n\rightarrow\infty}\Ex[U_{n,M}^6]=\infty$. Then there exists $M$ such that $\limsup_{n\rightarrow\infty}\Ex[U_{n,M}^6]>\limsup_{n\rightarrow\infty}\Ex[U_{n,1}^6]+\Ex[N(0,1)^6]$. By passing to a subsequence (which depends on the choice of $M$), without loss of generality we can assume 
\begin{align}\label{eq:U_nM_moment}
\lim_{n\rightarrow\infty}\Ex[U_{n,M}^6]>\limsup_{n\rightarrow\infty}\Ex[U_{n,1}^6] +\Ex[N(0,1)^6]. 
\end{align} 
Now, since $U_{n,M}$ is non-decreasing in $M$, $$-U_{n,M}\mathbf{1}\{U_{n,M}\le 0\}\le -U_{n,1}1\{U_{n,1}\le 0\}\le |U_{n,1}|.$$ This gives, using  \eqref{eq:U_nM_moment},  
\begin{align}\label{eq:UnM_III}
\lim_{n\rightarrow\infty}\Ex[U_{n,M}^6\mathbf{1}\{U_{n,M}>0\}]> \Ex[N(0,1)^6].
\end{align}
Moreover, for every $M > 0$ fixed, Lemma \ref{moment_r} shows that $\{U_{n,M}\}_{n \geq 1}$ is tight, and, hence, by passing to a further subsequence we can assume that $U_{n,M}$ converges in distribution to a random variable, which we denote by $U(M)$, as $n \rightarrow \infty$. Then taking limit as $n \rightarrow \infty$ in \eqref{eq:UnM_III} gives 
\begin{align}\label{eq:UM_moment}
\Ex[U(M)^6 \bm 1\{U(M)\ge 0\}] >\Ex[N(0,1)^6] .
\end{align} 
since $\lim_{n \rightarrow \infty}\Ex[U_{n,M}^6\mathbf{1}\{U_{n,M}>0 \} ] =   \Ex[U(M)^6 \bm 1\{U(M)\ge 0 \}]$ by the boundedness of the moments. However, $U(M)$ is stochastically smaller than $N(0,1)$, which implies, $\Ex[U(M)^6 \bm 1\{U(M)\ge 0\} ] \le \frac{1}{2}\Ex[N(0,1)^6] ,$ a contradiction to the \eqref{eq:UM_moment}. This proves \eqref{eq:UnM_II} and completes the proof of the necessity in Theorem \ref{thm:normality}.

\section{Examples}
\label{sec:examples}

In this section we discuss various examples which illustrate the necessity of the different conditions in the results obtained above. We begin with an example where the HT estimator is inconsistent for estimating $N(K_2, G_n)$, where the first condition in \eqref{eq:edge_condition} holds, but the second condition fails, which shows the necessity of truncating on the high-degree vertices to establish consistency.

\begin{example} 
\label{example:Gn_consistent}
(An inconsistent example) Take $G_n = K_{1, n}$ be the $n$-star graph, $H=K_2$ to be an edge, and $p_n =  \frac12$. Then $T(H, G_n)=0$ with probability $\frac{1}{2}$ (when the central vertex of the star is not chosen), and $T(H, G_n) \sim \mathrm{Bin}(n-1,\frac{1}{2})$ with probability $\frac{1}{2}$ (when the central vertex of the star is chosen). Consequently, noting that $\hat N(K_2, G_n) = \frac1{p_n^2}T(K_2, G_n) =4 T(K_2, G_n) $, it follows that
	$$\frac{\hat N(K_2, G_n)}{N(K_2, G_n)} = \frac{\hat N(K_2, G_n)}{n -1} \stackrel{D}{\rightarrow} \tfrac{1}{2} \delta_0 + \tfrac{1}{2} \delta_2,$$
where $\delta_a$ denotes the point mass at $a \in \R$. In particular, this shows that $\hat N(K_2, G_n)$ is inconsistent for $N(K_2, G_n)$. This is because, while the first condition in \eqref{eq:edge_condition} holds, the second condition fails, because $$\displaystyle\frac{1}{N(K_2,G_n)}\sum_{v = 1}^{|V(G_n)|} d_v\ind \{d_v>\e p_n|E(G_n)|\}=1,$$
for every $\e\in (0,2)$. 
\end{example}

Now, we construct an example where the HT estimator is consistent but its limiting distribution is non-Gaussian, in fact, it is discrete.

\begin{example}\label{example:consistent} (Non-Gaussian Limiting Distribution)  Consider a graph $G_n$ which has $r_n$ many disjoint $a_n$-stars, and $r_n$ many disjoint $b_n$ cliques, such that
\begin{align}\label{eq:abr}
r_n+b_n^{\frac{3}{2}}\ll a_n\ll b_n^2. 
\end{align} 
where $a_n, b_n, r_n$ are all integer sequences diverging to infinity. Note that $|V(G_n)|=r_n(a_n+1+b_n) = (1+o(1)) r_na_n$. Then with $H=K_2$ we have $N(K_2,G_n)=r_na_n+r_n{b_n\choose 2} = (\tfrac{1}{2}+o(1)) r_nb_n^2 $. In this case, 
\begin{align}\label{eq:THGn_example} 
T(K_2, G_n)=\sum_{i=1}^{r_n}\left[ X_iY_i+{Z_i\choose 2}\right], 
\end{align}
where $\left(\{X_i\}_{i=1}^{r_n}, \{Y_i\}_{i=1}^{r_n}, \{Z_i\}_{i=1}^{r_n}\right)$ are mutually independent, with $\{X_i\}_{i=1}^{r_n}$  i.i.d. $\mathrm{Ber}(1/r_n)$, $\{Y_i\}_{i=1}^{r_n}$ i.i.d. $\mathrm{Bin}(a_n, 1/r_n)$ and $\{Z_i\}_{i=1}^{r_n}$ i.i.d. $\mathrm{Bin}(b_n, 1/r_n)$. 
Therefore, 
\begin{align}\label{eq:evar2_I}
\Ex\left[\sum_{i=1}^{r_n}X_iY_i \right] = \frac{a_n}{r_n}, \quad \vr\left[\sum_{i=1}^{r_n}X_iY_i \right] = (1+o(1)) r_n\left(\frac{a_n}{r_n^2} + \frac{a_n^2}{r_n^3} \right) = (1+o(1)) \frac{a_n^2}{r_n^2},
\end{align}
and 
\begin{align}\label{eq:evar2_II}
\Ex\left[\sum_{i=1}^{r_n}{Z_i\choose 2} \right] =\frac{1}{r_n} {b_n \choose 2} = (1+o(1))  \frac{b_n^2}{2r_n}, \quad \vr\left[\sum_{i=1}^{r_n}{Z_i\choose 2}\right] = (1+o(1)) \frac{b_n^3}{r_n^2}.
\end{align}
Using \eqref{eq:evar2_I} and \eqref{eq:evar2_II} along with \eqref{eq:abr} in 
\eqref{eq:THGn_example} gives 
\[\Ex[T(K_2,G_n)] = (1+o(1)) \frac{ b_n^2}{2r_n} \text{ and }  \vr[T(K_2,G_n)] = (1+o(1)) \frac{a_n^2}{r_n^2}=o(\Ex[T(K_2,G_n)])^2,\] 
which shows $\hat N(K_2, G_n)$ is consistent for $N(K_2, G_n)$. However, in this case the asymptotic distribution of $\hat N(K_2, G_n)$ is non-normal. In particular, we will show that 
\begin{align}\label{eq:ZHGn_example}
Z(K_2, G_n)\stackrel{D}{\rightarrow} \mathrm{Pois}(1)-1. 
\end{align} 
Indeed, note that 
\begin{align*}
& \frac{T(K_2, G_n)-\Ex[T(K_2, G_n)]}{\sqrt{\vr[T(K_2, G_n)]}} \\ 
& =\frac{1}{\sqrt{\vr[T(K_2, G_n)]}} \left[\sum_{i=1}^{r_n}(X_iY_i-\Ex[X_i Y_i] )+\sum_{i=1}^{r_n}\left({Z_i\choose 2}-\Ex{Z_i\choose 2}\right) \right] \\
& = \frac{1}{\sqrt{\vr[T(K_2, G_n)]}}\sum_{i=1}^{r_n}X_iY_i-\frac{\sum_{i=1}^{r_n} \Ex[X_i Y_i] }{\sqrt{\vr[T(K_2, G_n)]}} +O_P\left(\frac{\sqrt{\vr[\sum_{i=1}^{r_n}{Z_i\choose 2}]}}{\sqrt{\vr[T(K_2, G_n)]}}\right),
\end{align*}
where the second and third terms converge to $-1$ and $0$ respectively using \eqref{eq:evar2_I} and \eqref{eq:evar2_II}, respectively. Therefore, to complete the proof of \eqref{eq:ZHGn_example}, it suffices to show that \[\frac{1}{\sqrt{\vr[T(K_2, G_n)]}}\sum_{i=1}^{r_n}X_iY_i\stackrel{D}{\rightarrow} \mathrm{Pois}(1).\] This follows by noting that 
\[\frac{1}{\sqrt{\vr[T(K_2, G_n)]}}\sum_{i=1}^{r_n}X_iY_i \stackrel{D}=\frac{r_n}{a_n} \mathrm{Bin}\left(a_n \sum_{i=1}^{r_n} X_i, \frac{1}{r_n} \right) + o_P(1) \stackrel{D}{\rightarrow} \mathrm{Pois}(1), \]
as $a_n/r_n\rightarrow\infty$. 
\end{example}

The next example illustrates the necessity of assuming $p_n$ to be bounded away from 1 (in particular $p_n \in (0, \frac{1}{20}]$) for the limiting normality and the fourth-moment phenomenon of the HT estimator. In particular, this example constructs a sequence of graphs $\{G_n\}_{n \geq 1}$ for which if $p_n$ is chosen to be large enough, then even though $\Ex[Z(K_2, G_n)^4] \rightarrow 3$, $Z(K_2, G_n)$ does not converge to $N(0, 1)$.

\begin{example}\label{example:ab} (Why an upper bound on the sampling ratio is necessary?)  Let $H=K_2$ be the edge and $G_n$ be the disjoint union of an $a_n$-star and $b_n$ disjoint edges with $a_n\ll b_n\ll a_n^2$. Then as before \eqref{eq:ve} holds. 
Fix a sampling ratio $p_n=p$ free of $n$, where the exact value of $p$ will be chosen later. Then we have 
\begin{align}\label{eq:bin_rep}
T(K_2, G_n)=X_nY_n+Z_n,
\end{align}
where $X_n\sim \operatorname{Ber}(p)$, $Y_n\sim \operatorname{Bin}(a_n, p)$ and $Z_n \sim \operatorname{Bin}(b_n, p^2)$ are independent. Note that 
\begin{align}\label{eq:evar}
\vr[X_nY_n] & =a_np^2(1-p )+ a_n^2 p^3(1-p ) = (1+o(1)) p^3(1-p)a_n^2, ~  \vr[Z_n] = b_np^2(1-p^2).
\end{align}
Since $Z_n\sim \mathrm{Bin}(b_n,p^2)$, 
\begin{align}\label{eq:Zn_example}
\frac{Z_n-\Ex[Z_n]}{\sqrt{\mathrm{Var}[Z_n]}}\stackrel{D}{\rightarrow}N(0,1) \text{ which implies, } \frac{Z_n-\Ex[Z_n]}{\sqrt{\mathrm{Var}[T(K_2, G_n)]}}\stackrel{P}{\rightarrow}0,
\end{align}
where the second conclusion uses $\vr[T(K_2, G_n)]=\vr[X_nY_n]+\vr[Z_n] \gg \vr[Z_n]$ (by \eqref{eq:evar}). Thus, on the set $\{X_n=0\}$ (which happens with positive probability $p$) we have
\[Z(K_2, G_n) = \frac{T(K_2, G_n)-\Ex[T(K_2, G_n)]}{\sqrt{\mathrm{Var}[T(K_2, G_n]}}=-\frac{\Ex[X_n Y_n]}{\sqrt{\mathrm{Var}[T(K_2, G_n)]}}+\frac{Z_n-\Ex[Z_n]}{\sqrt{\vr[T(K_2, G_n)]}}\stackrel{P}{\rightarrow}-\sqrt{\frac{p}{1-p}},\]
as the first term converges to $-\sqrt{\frac{p}{1-p}}$ by \eqref{eq:evar}, and the second term converges in probability to $0$ by \eqref{eq:Zn_example}. This shows that any limiting distribution for $Z(K_2,G_n)$ has a point mass, and hence cannot be $N(0,1)$.

To demonstrate that the fourth moment phenomenon indeed fails in this case, we will now show that $\Ex[Z(K_2, G_n)^4]\rightarrow 3$ for a proper of choice of $p$. Towards this, note that \begin{align}\label{e:rel}
\Ex[T(H, G_n) & -\Ex[T(H, G_n)]]^4-3\vr[T(H, G_n)]^2 \nonumber \\ 
& = \Ex[X_nY_n-\Ex[X_nY_n] ]^4-3\vr[X_nY_n]^2+\Ex[Z_n - \Ex[Z_n] ]^4-3\vr[Z_n]^2.
\end{align}
Now, a simple calculation shows, 
\begin{align*}
\Ex[X_nY_n-\Ex[X_nY_n]]^4-3\vr[X_nY_n]^2= O(a_n^3) + a_n^4 g(p), 
\end{align*}
where $g(p)=p^4(1-p)(1-3p+3p^3).$ Note that the function $1-3p+3p^3$ has two roots $p_1 \approx 0.39 $ and $p_2 \approx 0.74$ inside $[0,1]$. Hence, choosing $p=p_1$ gives, 
$$|\Ex[X_nY_n-\Ex[X_nY_n]]^4-3\vr[X_nY_n]^2| =O( a_n^3).$$
Moreover, \[\Ex[Z_n-\Ex[Z_n] ]^4-3\vr[Z_n]^2=b_n p(1-p)(1-6p+6p^2)=O(b_n).\] 
Hence, \eqref{eq:evar} and \eqref{e:rel} give 
$$\Ex[Z(H, G_n)]^4-3 = \frac{\Ex[T(H, G_n) -\Ex[T(H, G_n)]]^4}{\vr[T(H, G_n)]^2 } - 3 = O \left(\frac{a_n^3+b_n}{a_n^4} \right) \rightarrow 0.$$  This shows that some upper bound on the sampling ratio $p$ is necessary to obtain the fourth-moment phenomenon of $Z(H, G_n)$.  
\end{example}

Finally, we construct a sequence of graphs $\{G_n\}_{n \geq 1}$ for which $Z(K_2, G_n) \stackrel{D} \rightarrow N(0, 1)$, but $\Ex[Z(K_2, G_n)] \nrightarrow 3$, that is, the (untruncated) fourth-moment condition is not necessary for normality. This illustrates the  need to consider the truncated fourth-moment condition as in Theorem \ref{thm:normality}, which gives a necessary and sufficient condition for the limiting normal distribution of the HT estimator.

\begin{example}\label{example:4_normal}(Fourth moment phenomenon is not necessary) Let $H=K_2$ be the edge, and $G_n$ be the disjoint union of an $a_n$-star and $b_n$ disjoint edges, with $a_n\ll b_n\ll a_n^2$. Suppose that the sampling ratio $p_n$ satisfies
 \begin{align}\label{eq:pn_condition_II}
 \frac{1}{a_n}\ll p_n \ll \frac{b_n}{a_n^2}.
 \end{align}
Then as in Example \ref{example:ab}, $$T(K_2, G_n)=X_nY_n+Z_n,$$
where $X_n\sim \operatorname{Ber}(p_n)$, $Y_n\sim \operatorname{Bin}(a_n, p_n)$, and $Z_n \sim \operatorname{Bin}(b_n, p_n^2)$ are independent. Then by calculations similar to \eqref{eq:evar} with $p$ replaced by $p_n$, we have  
 \begin{align*}
\lim_{n\rightarrow\infty}\frac{\vr[X_nY_n]}{\vr[T(H,G_n)]}=0 \quad \text{ and } \quad \lim_{n\rightarrow\infty}\frac{\vr[Z_n]}{\vr[T(H,G_n)]} = 1.
\end{align*} 
Using Slutksy's theorem, this gives
\begin{align*}
Z(H, G_n) = \frac{X_nY_n - \Ex[X_nY_n] }{\sqrt{\vr[T(H, G_n)]}} + \sqrt{\frac{\vr[T(H, G_n)]}{\vr[Z_n]}}\cdot\frac{Z_n-\Ex[Z_n]}{\sqrt{\vr[Z_n]}}  & \stackrel{D} \to N(0,1).
\end{align*}
To show that the converse of the (untruncated) fourth moment phenomenon fails, we now show that 
$\lim_{n\rightarrow\infty}\Ex[Z(K_2,G_n)^4]=\infty.$ Recalling \eqref{e:rel}, it suffices to show that
\begin{align}\label{eq:Zn_4example}
\lim_{n\rightarrow\infty}\frac{\Ex[X_nY_n-\Ex[X_nY_n] ]^4}{\vr[T(H,G_n)]^2}=\infty.
\end{align}
To this end, using \eqref{eq:pn_condition_II} note that
\[\Ex[X_n^4 Y_n^4] = \Ex[X_n^4] \Ex[Y_n^4] = (1+o(1)) a_n^4p_n^5\gg b_n^2p_n^4 \text{ and } \Ex[X_nY_n]^4=a_n^4p_n^8 \ll  b_n^2p_n^4.\] 
The result in \eqref{eq:Zn_4example} then follows by using \eqref{eq:evar}. 
\end{example}

	\end{document}